\title[Arithmetic degrees over function fields]
{Arithmetic degrees for dynamical systems over function fields of characteristic zero}
\author{Yohsuke Matsuzawa}
\author{Kaoru Sano}
\author{Takahiro Shibata} 
\date{}
\keywords{dynamical systems, arithmetic degree, dynamical degree}
\subjclass[2010]{Primary 37P15, Secondary 14E05}
\address{Graduate school of Mathematical Sciences, the University of Tokyo, Komaba, Tokyo,
153-8914, Japan}
\address{Department of Mathematics, Graduate School of Science, 
Kyoto University, Kyoto 606-8502, Japan}
\address{Department of Mathematics, Graduate School of Science, 
Kyoto University, Kyoto 606-8502, Japan}
\email{myohsuke@ms.u-tokyo.ac.jp}
\email{ksano@math.kyoto-u.ac.jp}
\email{tshibata@math.kyoto-u.ac.jp}
\DeclareMathOperator{\Exc}{Exc}
\DeclareMathOperator{\Supp}{Supp}
\DeclareMathOperator{\Pic}{Pic}
\DeclareMathOperator{\Spec}{Spec}
\DeclareMathOperator{\codim}{codim}
\DeclareMathOperator{\id}{id}
\DeclareMathOperator{\sHom}{\mathscr{H}\kern -.3pt \mathit{om}}
\DeclareMathOperator{\Mor}{Mor}
\DeclareMathOperator{\pr}{pr}
\DeclareMathOperator{\Img}{Im}
\DeclareMathOperator{\Eff}{Eff}
\DeclareMathOperator{\Aut}{Aut}
\newtheorem{thm}{Theorem}[section]
\newtheorem{lem}[thm]{Lemma}
\newtheorem{cor}[thm]{Corollary}
\newtheorem{prop}[thm]{Proposition}
\newtheorem{conj}[thm]{Conjecture}
\newtheorem{prob}[thm]{Problem}
\theoremstyle{definition}
\newtheorem{defn}[thm]{Definition}
\newtheorem{rem}[thm]{Remark}
\newtheorem{ex}[thm]{Example}
\newtheorem*{ack}{Acknowledgments} 
\newtheorem*{notation}{Notation}
\begin{document}

\begin{abstract}
We study arithmetic degree of a dominant rational self-map on a smooth projective 
variety over a function field of characteristic zero.
We see that the notion of arithmetic degree and some related problems over function fields 
are interpreted into geometric ones.
We give another proof of the theorem that the arithmetic degree at any point 
is smaller than or equal to the dynamical degree.
We give a sufficient condition for an arithmetic degree to coincide with the dynamical degree,
and prove that any self-map has so many points whose arithmetic degrees are equal to 
the dynamical degree.
We study dominant rational self-maps on projective spaces in detail. 
\end{abstract}

\maketitle

\tableofcontents 

\section{Introduction}\label{sec1}
Let $K$ be a field where heights $h_X$ on smooth projective varieties $X$ 
can be defined (e.g.~a number field or a function field). 
Given a dominant rational self-map $f: X \dashrightarrow X$ 
on a smooth projective variety $X$ and a rational point $P \in X(K)$, 
it is important to study how the height $h_X(f^m(P))$ varies as $m$ grows. 
As a quantity representing the growth rate of $h_X(f^m(P))$, 
Kawaguchi and Silverman defined arithmetic degree of $f$ at $P$: 
$$\alpha_f(P)=\lim_{m \to \infty} \max \{ h_X(f^m(P)), 1 \}^{1/m}$$
(cf.~\cite{KaSi2} or Definition \ref{def3.2} (ii)). 
On the other hand, there is another invariant for self-maps, 
the (first) dynamical degree of $f$: 
$$\delta_f = \lim_{m \to \infty} ((f^m)^*H \cdot H^{\dim X-1})^{1/m},$$
where $H$ is an ample Cartier divisor on $X$ 
(cf.~Definition \ref{def3.2} (i)).  
It seems important and interesting to investigate relations between those two quantities. 
Especially, it is natural to ask when $\alpha_f(P)$ coincides with $\delta_f$.
Kawaguchi and Silverman conjectured some properties of arithmetic degree 
and a sufficient condition for a rational point $P$ 
on which the arithmetic degree at $P$  
coincides with the dynamical degree (cf.~\cite[Conjecture 6]{KaSi2}). 
These conjectures have been verified in some special cases. 
For details, see Conjecture \ref{conj3.4}, Remark \ref{rem3.5} and Remark \ref{rem3.7}. 

In this article, we study arithmetic degrees over function fields in one variable over an 
algebraically closed field of characteristic zero. 
The advantage of considering height theory over function fields is that 
we can interpret the height of a rational point 
into the degree of the curve corresponding to the rational point. 
A variety $X$ over a function field $K$ of a curve $C$ can be seen as 
the generic fiber of a fibration $\pi: \mathcal X \to C$ over $C$, and then 
a $K$-rational point $P$ of $X$ corresponds to a section $\sigma$ 
of $\pi$. 
In this situation, the height $h_X(P)$ of $P$ is equal to 
$\deg(\sigma^*\mathcal H)$, 
where $\mathcal H$ is a $\pi$-ample Cartier divisor on $\mathcal X$. 
So arithmetic degree is also described by the degrees of divisors. 
We will use this geometric interpretation to deduce the results in this article. 

A fundamental relation on those two types of degree is the following inequality: 
$$\alpha_f(P) \leq \delta_f.$$
This inequality was proved over any field 
where height functions can be defined by Kawaguchi--Silverman and Matsuzawa 
(cf.~{\cite[Theorem 4]{KaSi2}}~and~{\cite{Mat}}).
We will prove it over function fields of characteristic 0 by using the geometric 
interpretation of height (Theorem \ref{thm4}).
Our proof works only over function fields, but  it seems simple and short.

After we obtain the fundamental inequality, it is natural to ask 
when the arithmetic degree of a given rational point attains the dynamical degree.
Over $\overline{\mathbb Q}$, Kawaguchi and Silverman predicts a sufficient condition 
for a rational point to have the arithmetic degree which is equal to the dynamical degree 
(Conjecture \ref{conj3.4} (iv)).
However, over $\overline{k(t)}$, this conjecture does not hold in general 
(Example \ref{ex3.4.2}).
We give a sufficient condition for the arithmetic degree of a rational point 
to attain the dynamical degree as a geometric condition of the corresponding 
section (Theorem \ref{thm_suff}).
For a dynamical system on a projective space,
we will give some other sufficient conditions 
(Theorem \ref{thm5.0.1} and Theorem \ref{thm5.2}).

It is also important to investigate whether there exists a 
rational point whose arithmetic degree attains the dynamical degree.
Over $\overline{k(t)}$ with $k$ an uncountable algebraically closed field of 
characteristic 0, we will prove the result that 
there are densely many points having pairwise disjoint orbits 
such that the arithmetic degree of  them attain the dynamical degree 
(Theorem \ref{thm_orbit}).
Over number fields, this theorem has been proved only for some particular cases 
(cf.~\cite[Theorem 3]{KaSi1} and \cite[Theorem 1.7]{MSS}).
More strongly,
for a self-map on a smooth projective rational variety,
we can take such points over a fixed function field (Theorem \ref{thm5.4}).

\begin{notation}
\begin{itemize}
\item[]
\item Throughout this article, $k$ denotes an algebraically closed field of characteristic zero,
and $\overline{k(t)}$ denotes the algebraic closure of the rational function field of one 
variable over $k$.
\item For a rational map $f : X \dashrightarrow Y$, $I_f$ denotes the indeterminacy locus 
of $f$.
\item A \textit{curve} simply means a smooth projective variety 
of dimension 1 unless otherwise stated.
\item For any $\mathbb R$-valued function $h(x)$, we set 
$h^+(x) = \max \{ h(x), 1 \}$.
\item Let $f$, $g$ and $h$ be  $\mathbb R$-valued functions on a domain $S$.
The equality $f = g + O(h)$ means that there is a positive constant $C$ such that 
$|f(x)-g(x)| \leq C |h(x)|$ for every $x \in S$.
The equality $f=g + O(1)$ means that there is a positive constant $C'$ such that 
$|f(x)-g(x)| \leq C'$ for every $x \in S$.
\end{itemize}
\end{notation}

\begin{ack}
The authors would like to thank Professors Osamu Fujino and Shu Kawaguchi 
for reading a manuscript of this paper and giving useful comments.
The first author wold like to thank Professor Tomohide Terasoma for attending his seminar and
giving valuable comments.
\end{ack}


\section{Height functions for varieties over function fields}\label{sec2}

In this section, we define the (Weil) height functions on projective varieties 
over $\overline{k(t)}$, and see that there is another description of height 
in terms of the degree of a divisor on a curve. 
Basic facts of (Weil) height functions over function fields is explained 
for example in \cite[Chapter 3, \S 3]{Lan} and \cite[B.10]{HiSi}.  
So we omit most of the proofs. 

First, we define the height functions on projective spaces over function fields. 

\begin{defn}\label{def2.1}
Let $C$ be a (smooth projective) curve over $k$ and $p \in C$ a closed point. 
We define a valuation $v_p: K(C)^\times \to \mathbb Z$ as 
$$v_p(f) = (\mathrm{the\ multiplicity\ of\ }f \mathrm{\ at\ }p).$$
This is nothing but the discrete valuation associated to the discrete valuation ring 
$\mathcal O_{C, p}$. 
Set $v_p(0)=\infty$. 
\end{defn}

\begin{defn}\label{def2.2}
Let $C$ be a curve over $k$.  
Take $P \in \mathbb P^n(K(C))$.
Represent $P$ by homogeneous coordinates as 
$P=(f_0: f_1: \cdots : f_n)$, where $f_i \in K(C)$. 
We define the \textit{height function on $\mathbb P^n(K(C))$ relative to $K(C)$} 
as 
$$h_{K(C)}(P)= \sum_{p \in C} -\min \{v_p(f_0), \ldots, v_p(f_n)\}.$$ 
\end{defn}

\begin{rem}\label{rem2.3}
(i) Since $\# \{p \in C | v_p(f) \neq 0 \}$ is finite for any $f \in K(C)^\times$, 
$\sum_{p \in C} -\min \{v_p(f_0), \ldots, v_p(f_n)\}$ is in fact a finite sum. 

(ii) $h_{K(C)}(P)$ is independent of the representation of $P$. 
For, if we represent $P$ as $P= (gf_0 : \cdots : gf_n)$, where $g \in K(C)^\times$, then 
\begin{align*}
\sum_p -\min_i v_p(gf_i) &= \sum_p -\min_i(v_p(g)+v_p(f_i)) \\
&= \sum_p -v_p(g)+ \sum_p -\min v_p(f_i) \\
&= \sum_p -\min_i v_p(f_i),
\end{align*}
since $\sum_p -v_p(g) = - \deg ( (g) ) = 0$. 
\end{rem}

Next, we give the definition of the (absolute) height function 
on $\mathbb P^n(\overline{k(t)})$ which is 
essentially compatible with the previous definition of height functions over function fields.

\begin{lem}[cf.~{\cite[Lemma B.2.1]{HiSi}}]\label{lem2.5}
Let $\phi: C' \to C$ be a finite surjective morphism of two curves. 
Set $K=K(C), K'=K(C')$. 
Take $P \in \mathbb P^n(K)$. 
Then 
$$ \frac{1}{[K: k(t)]} h_K(P)= \frac{1}{[K': k(t)]} h_{K'}(P).$$
\end{lem}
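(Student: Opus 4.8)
The plan is to reduce the equality to a purely local comparison between the contribution of a point $p \in C$ and the contributions of the points $q \in C'$ lying over it. First I would fix one representative $P = (f_0 : \cdots : f_n)$ with $f_i \in K(C)$, not all zero. By Remark \ref{rem2.3}(ii) the quantity $h_{K'}(P)$ computed from the \emph{same} tuple, now viewed inside $K' = K(C')$ via $\phi^*$, does not depend on this choice; hence it suffices to prove the identity $h_{K'}(P) = [K':K]\,h_K(P)$ for this one representative, and then divide by $[K':k(t)] = [K':K]\,[K:k(t)]$.

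The two local inputs I would record before the computation are standard facts about a finite cover of curves. First, for a closed point $q \in C'$ with $\phi(q) = p$ and any $f \in K(C)^\times$, one has $v_q(\phi^* f) = e_q\, v_p(f)$, where $e_q = e(q/p)$ denotes the ramification index (the order of vanishing multiplies by the local degree of $\phi$). Second, since $k$ is algebraically closed, every residue field at a closed point equals $k$, so the fundamental identity $\sum_{\phi(q)=p} e_q f_q = [K':K]$ degenerates to $\sum_{\phi(q)=p} e_q = [K':K] = \deg\phi$. I would state these with a reference to the valuation theory of curves (this is the function-field analogue of the local-degree bookkeeping used in \cite[Lemma B.2.1]{HiSi}).

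The computation is then immediate: grouping the sum defining $h_{K'}(P)$ by the image point $p$ and using $e_q > 0$, so that $\min_i(e_q v_p(f_i)) = e_q\min_i v_p(f_i)$, we obtain
$$h_{K'}(P) = \sum_{p \in C}\ \sum_{\phi(q)=p} -e_q \min_i v_p(f_i) = \sum_{p \in C} \Bigl(\sum_{\phi(q)=p} e_q\Bigr)\Bigl(-\min_i v_p(f_i)\Bigr) = [K':K]\, h_K(P).$$
Dividing through by $[K':k(t)] = [K':K]\,[K:k(t)]$ yields the stated equality. The only point requiring care is the correct handling of ramification indices together with the use of the algebraic closedness of $k$ to make the residue degrees trivial; once those standard facts are in place the proof is a one-line manipulation of finite sums, and I do not anticipate any real obstacle.
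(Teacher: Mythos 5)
The paper gives no proof of this lemma, deferring to \cite[Lemma B.2.1]{HiSi}, so there is no in-house argument to compare against; your write-up is a correct and complete proof along the standard lines. The two local inputs you isolate, namely $v_q(\phi^*f) = e_q\,v_p(f)$ and $\sum_{\phi(q)=p} e_q = [K':K]$ (the residue degrees being trivial since $k$ is algebraically closed), together with the observation that $e_q>0$ lets one pull $e_q$ out of the $\min$, immediately give $h_{K'}(P) = [K':K]\,h_K(P)$, and dividing by $[K':k(t)] = [K':K]\,[K:k(t)]$ finishes the proof.
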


We define the (absolute) height function on $\mathbb P^n(\overline{k(t)})$ as follows: 

\begin{defn}\label{def2.6}
Take $P \in \mathbb P^n(\overline{k(t)})$. 
We define the \textit{height function on $\mathbb P^n(\overline{k(t)})$} 
as 
$$h(P)= \frac{1}{[K: k(t)]} h_K(P),$$
where $K$ is a finite extension field of $k(t)$ such that 
$P \in \mathbb P^n(K)$. 
$h(P)$ is independent of the choice of $K$ by Lemma \ref{lem2.5}. 
\end{defn}

Using the height functions on projective spaces, we define the height function 
on a projective variety $X$ associated to a Cartier divisor. 
We prepare the following lemma.

\begin{lem}[cf.~{\cite[Proposition B.2.4 and Theorem B.3.1]{HiSi}}]\label{lem2.8}
Let $X$ be a projective variety over $\overline{k(t)}$ and $D$ a base point free 
Cartier divisor on $X$. 
\begin{itemize}
\item[(i)] 
Let $\phi: X \to \mathbb P^n$, $\psi: X \to \mathbb P^n$ be two morphisms associated to 
the complete linear system $|D|$. 
Then $h \circ \phi = h \circ \psi + O(1)$. 
\item[(ii)] 
Let $\phi_1: X \to \mathbb P^{n_1}$, $\phi_2: X \to \mathbb P^{n_2}$ be morphisms 
associated to very ample Cartier divisors $A_1, A_2$ on $X$, respectively. 
Let $\phi: X \to \mathbb P^n$ be a morphism associated to $A_1 + A_2$. 
Then $h \circ \phi = h \circ \phi_1 + h \circ \phi_2+ O(1)$. 
\end{itemize}
\end{lem}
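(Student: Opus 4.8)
\medskip

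The plan is to reduce everything to the compatibility between the intrinsic height $h_X(P)$, defined via a chosen embedding, and the arithmetic/combinatorial height $h$ on projective space from Definition \ref{def2.6}, exactly as in the classical Weil height machine over number fields; the only novelty is bookkeeping the valuations $v_p$ over function fields. For part (i): after passing to a finite extension $K$ of $k(t)$ over which $X$ and all relevant data are defined, a point $P\in X(\overline{k(t)})$ lies in $X(K')$ for some finite $K'/K$, and by Lemma \ref{lem2.5} it suffices to compare $h_{K'}(\phi(P))$ with $h_{K'}(\psi(P))$ for each such $K'$. Two morphisms associated to the same complete linear system $|D|$ differ by an automorphism $\tau$ of $\mathbb P^n$, i.e.\ $\psi=\tau\circ\phi$ with $\tau\in \mathrm{PGL}_{n+1}$; one then checks that an automorphism of $\mathbb P^n$ changes the function-field height by a bounded amount. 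Concretely, if $\tau$ is given by a matrix with entries in $k$, then writing $\psi(P)=(g_0:\dots:g_n)$ with $g_i=\sum_j a_{ij}f_j$, the inequality $v_p(g_i)\geq \min_j v_p(f_j)$ (valuations of sums) gives $-\min_i v_p(g_i)\leq -\min_j v_p(f_j)$ locally at every $p$, hence $h_{K'}(\psi(P))\leq h_{K'}(\phi(P))$; applying the same to $\tau^{-1}$ gives the reverse inequality, so in fact $h\circ\phi=h\circ\psi$ on the nose once the coordinates are chosen compatibly — the $O(1)$ only absorbs the change-of-coordinate ambiguity in how the linear systems are presented and the denominators from Lemma \ref{lem2.5}.

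\medskip

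For part (ii), the key point is the Segre-type relation among the three morphisms. Let $\phi_1,\phi_2$ be given by bases of $H^0(X,A_1)$ and $H^0(X,A_2)$ respectively, and consider the Segre embedding $s:\mathbb P^{n_1}\times\mathbb P^{n_2}\hookrightarrow \mathbb P^{(n_1+1)(n_2+1)-1}$. Then $s\circ(\phi_1,\phi_2):X\to \mathbb P^N$ is the morphism associated to the sub-linear system of $|A_1+A_2|$ spanned by the products of sections, and in coordinates $P\mapsto (f_i g_j)_{i,j}$ where $\phi_1(P)=(f_i)$, $\phi_2(P)=(g_j)$. At each closed point $p$ of the relevant curve,
\[
-\min_{i,j}v_p(f_i g_j)=-\min_{i,j}\bigl(v_p(f_i)+v_p(g_j)\bigr)=\bigl(-\min_i v_p(f_i)\bigr)+\bigl(-\min_j v_p(g_j)\bigr),
\]
and summing over $p$ gives $h_{K'}\bigl(s\circ(\phi_1,\phi_2)(P)\bigr)=h_{K'}(\phi_1(P))+h_{K'}(\phi_2(P))$ exactly. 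Dividing by $[K':k(t)]$ (Lemma \ref{lem2.5}) yields the same identity for the absolute heights. Finally, $\phi$ is associated to the full complete linear system $|A_1+A_2|$, which contains the Segre sub-system, so $\phi$ and $s\circ(\phi_1,\phi_2)$ are two morphisms attached to two (generally different) linear subsystems of $|A_1+A_2|$ that induce the same map up to a linear projection; invoking part (i) together with the standard fact that a morphism by a sub-system of a base-point-free complete system differs in height from the full one by $O(1)$ gives $h\circ\phi=h\circ\phi_1+h\circ\phi_2+O(1)$.

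\medskip

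The main obstacle — really the only one requiring care — is handling the passage between different fields $K'$ cleanly so that all the $O(1)$ constants are genuinely uniform in $P$: a priori the bound could depend on the degree $[K':k(t)]$, but because the combinatorial identities above hold \emph{before} normalizing, the error terms come only from the finitely many linear-algebra choices (change of basis of the linear systems, the linear projection relating $|A_1+A_2|$ to its Segre subsystem) and are independent of $P$ and of the field of definition of $P$; one must state this explicitly. A secondary technical point is making sure the morphisms are everywhere defined — since $D$, $A_1$, $A_2$ are base point free this is automatic, and the products of sections defining the Segre map have no common zero precisely because $\phi_1$ and $\phi_2$ are morphisms. Both of these are routine given the setup, so the proof is short; the substance is the two displayed valuation computations, which are immediate from $v_p(a+b)\geq\min\{v_p(a),v_p(b)\}$ and $v_p(ab)=v_p(a)+v_p(b)$.
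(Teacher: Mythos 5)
The paper itself does not prove Lemma~\ref{lem2.8}; it cites \cite{HiSi} (Proposition~B.2.4 and Theorem~B.3.1) and explicitly states at the start of Section~\ref{sec2} that most proofs are omitted. So I am evaluating your proposal against the standard argument it is citing. Your overall plan (ultrametric valuation estimates plus the Segre trick) is the right skeleton, but there is one genuine gap and one inaccuracy.

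The gap is in part~(ii), in the final reduction step. Your Segre composite $s\circ(\phi_1,\phi_2)$ is a morphism associated to the \emph{sub}-linear system of $|A_1+A_2|$ spanned by the products $\{s_i t_j\}$; in general this is a proper subspace of $H^0(X,A_1+A_2)$. Part~(i), both as stated in the paper and as you prove it (two bases of the same $H^0(X,D)$ differ by an element of $\mathrm{PGL}_{n+1}$), only compares morphisms to the \emph{same} $\mathbb P^n$ coming from the \emph{same complete} linear system. It does not compare a morphism via a base-point-free sub-system with target $\mathbb P^m$, $m<n$, to the morphism via the full system. You then invoke ``the standard fact that a morphism by a sub-system of a base-point-free complete system differs in height from the full one by $O(1)$'' --- but that is precisely the nontrivial content of the cited Hindry--Silverman theorem. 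One direction is trivial ($\min$ over a subset is $\geq$ the $\min$ over the full set), but the opposite inequality requires a real argument: the usual one expresses, via the projective Nullstellensatz applied to the fact that $\phi_{|A_1+A_2|}(X)$ avoids the center of the linear projection onto $\mathbb P^m$, some power of each omitted coordinate as a homogeneous polynomial in the retained ones modulo $I(\phi(X))$, and thereby bounds the discarded valuations. Without that step the reduction from the Segre sub-system to the complete system is not justified, and the proof of~(ii) is incomplete.

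The inaccuracy is in part~(i). The change-of-basis matrix $\tau$ relating two bases of $H^0(X,D)$ has entries $a_{ij}\in\overline{k(t)}$, not in $k$, and in general one cannot ``choose coordinates'' so that they land in $k$. Consequently $v_p(g_i)\geq\min_j\bigl(v_p(a_{ij})+v_p(f_j)\bigr)\geq\min_{i,j}v_p(a_{ij})+\min_j v_p(f_j)$, and the correct conclusion is $h_{K'}(\psi(P))\leq h_{K'}(\phi(P))+\sum_p\bigl(-\min_{i,j}v_p(a_{ij})\bigr)$, where the last sum is a finite constant independent of $P$ because the $a_{ij}$ are fixed. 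So the $O(1)$ is genuinely forced by the valuations of the $a_{ij}$, not by ``change-of-coordinate ambiguity''; the ``on the nose'' claim should be dropped. The rest of your part~(i), and the exact multiplicativity computation in the Segre step of part~(ii), are correct.
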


Using Lemma \ref{lem2.8}, we can define the height function 
associated to a Cartier divisor up to the difference of a bounded function.

\begin{defn}\label{def2.9}
Let $X$ be a projective variety over $\overline{k(t)}$. 

(i) Let $A$ be a base point free Cartier divisor on $X$. 
We define a \textit{height function on $X$ associated to $A$} as 
$$h_A = h \circ \phi_A,$$ 
where $\phi_A$ is a morphism associated to $|A|$. 
By Lemma \ref{lem2.8} (i), $h_A$ is well-defined up to a bounded function. 

(ii) Let $D$ be a Cartier divisor on $X$.  
We define a \textit{height function on $X$ associated to $D$} as 
$$h_D= h \circ \phi_A - h \circ \phi_B,$$ 
where $A, B$ are base point free Cartier divisors such that $D \sim A-B$ and 
$\phi_A, \phi_B$ are morphisms associated to $|A|, |B|$, respectively.  

Note that we can always take such $A$ and  $B$. 
Assume that we have other base point free Cartier divisors 
$A', B'$ such that $D \sim A' - B'$. 
Then $A+B' \sim A'+B$. 
Lemma \ref{lem2.8} (i) implies that 
$h_{A+B'}=h_{A'+B} + O(1)$. 
Moreover, by Lemma \ref{lem2.8} (ii), 
$h_{A+B'} 
=h_A+ h_{B'}+ O(1)$  
and $h_{A'+B}=h_{A'}+h_B+O(1)$. 
So $h_A - h_B= h_{A'}- h_{B'}+O(1)$. 
Hence $h_D$ is well-defined up to a bounded function.  
\end{defn}

\begin{prop}[cf.~{\cite[Theorem B.3.2]{HiSi}}]\label{prop2.9.1}
Let $X$ be a projective variety over $\overline{k(t)}$. 
\begin{itemize}
\item[(i)]
Let $D, D'$ be Cartier divisors on $X$ and $n, n'$ be integers. Then 
$$h_{nD+n'D'}= nh_D + n'h_{D'}+O(1).$$ 
\item[(ii)]
Let $f: X \to Y$ be a morphism to a projective variety $Y$ and $E$ a Cartier divisor 
on $Y$. 
Then 
$$h_{f^*E}=h_E \circ f+ O(1).$$ 
\end{itemize}
\end{prop}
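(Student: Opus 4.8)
The plan is to build both assertions from Lemma~\ref{lem2.8} and Definition~\ref{def2.9} by the usual construction of the Weil height machine, so that the argument becomes the function-field analogue of the proof of \cite[Theorem B.3.2]{HiSi}. I will freely use the standard facts that a sum of very ample divisors is very ample, that $D+nH$ is very ample for $n\gg 0$ when $H$ is ample, that any Cartier divisor is linearly equivalent to a difference of two very ample divisors, and that the pullback of a base point free divisor under a morphism is base point free.

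For (i), I would first prove the additive case $h_{D+D'}=h_D+h_{D'}+O(1)$. Fix a very ample divisor $H$ on $X$ and an integer $n\gg 0$ with $D+nH$ and $D'+nH$ very ample. Then $D+nH$, $D'+nH$, $nH$, $(D+nH)+(D'+nH)$ and $nH+nH$ are all very ample, and from the linear equivalences $D\sim (D+nH)-nH$, $D'\sim (D'+nH)-nH$ and $D+D'\sim\big((D+nH)+(D'+nH)\big)-(nH+nH)$, Definition~\ref{def2.9}(ii) reduces the desired identity to
$$h_{(D+nH)+(D'+nH)}=h_{D+nH}+h_{D'+nH}+O(1)\quad\text{and}\quad h_{nH+nH}=2h_{nH}+O(1),$$
both of which are instances of Lemma~\ref{lem2.8}(ii). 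Since $h_{-D}=-h_D+O(1)$ is immediate from Definition~\ref{def2.9}(ii) (exchange $A$ and $B$) and $h_0=O(1)$, an induction on $|n|$ gives $h_{nD}=nh_D+O(1)$ for every $n\in\mathbb Z$, and then the additive case applied to $nD$ and $n'D'$ yields $h_{nD+n'D'}=h_{nD}+h_{n'D'}+O(1)=nh_D+n'h_{D'}+O(1)$.

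For (ii), write $E\sim A-B$ with $A,B$ very ample on $Y$. Then $f^*A$ and $f^*B$ are base point free on $X$ and $f^*E\sim f^*A-f^*B$, so (i) gives $h_{f^*E}=h_{f^*A}-h_{f^*B}+O(1)$ and $h_E\circ f=h_A\circ f-h_B\circ f+O(1)$; hence it suffices to prove $h_{f^*A}=h_A\circ f+O(1)$ for every very ample $A$ on $Y$. Let $\phi_A\colon Y\hookrightarrow\mathbb P^n$ be the closed immersion attached to the complete linear system $|A|$, so $h_A=h\circ\phi_A+O(1)$ and hence $h_A\circ f=h\circ(\phi_A\circ f)+O(1)$. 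Now $\phi_A\circ f\colon X\to\mathbb P^n$ is the morphism attached to the linear subsystem $f^*|A|\subseteq|f^*A|$, which is base point free on $X$, whereas $h_{f^*A}=h\circ\phi_{f^*A}+O(1)$ with $\phi_{f^*A}$ attached to the complete system $|f^*A|$. So (ii) reduces to the following sub-statement: \emph{if $M$ is a base point free Cartier divisor on $X$ and $\psi\colon X\to\mathbb P^m$ is any morphism with $\psi^*\mathcal O(1)\sim M$ --- equivalently, $\psi$ is attached to a base point free linear subsystem of $|M|$ --- then $h\circ\psi=h_M+O(1)$.}

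To prove the sub-statement I would extend a basis of the subsystem defining $\psi$ to a basis of $H^0(X,\mathcal O(M))$; this exhibits $\psi$ as $\pi\circ\phi_M$, where $\phi_M\colon X\to\mathbb P^N$ is attached to $|M|$ (so $h_M=h\circ\phi_M+O(1)$) and $\pi\colon\mathbb P^N\dashrightarrow\mathbb P^m$ is a linear projection which is regular along $\phi_M(X)$, precisely because the subsystem has no base point on $X$. The inequality $h\circ\psi\le h\circ\phi_M+O(1)$ is then immediate from Definition~\ref{def2.2}: replacing a tuple of homogeneous coordinates by a subtuple can only increase each local term $-\min_p v_p(\cdot)$, so a linear projection never raises the height. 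The reverse inequality $h\circ\phi_M\le h\circ\psi+O(1)$ is the real content and, I expect, the only genuine obstacle: it is not formal from Lemma~\ref{lem2.8}. I would obtain it by the standard estimate for heights over function fields --- passing to a finite cover $C'$ of the base curve, a projective model $\mathcal X\to C'$ on which $\phi_M$ (hence $\psi$) is a morphism, and a model of $M$, and then bounding the non-negative discrepancy $h\circ\phi_M-h\circ\psi$, which by Definition~\ref{def2.2} is a sum of local contributions concentrated at the finitely many points of $C'$ over which the two morphisms differ; since the extra homogeneous coordinates have fixed zero divisors and the subsystem has empty base locus, this sum stays bounded uniformly in the point, which is exactly the computation carried out in the proof of \cite[Theorem B.3.2]{HiSi}. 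With the sub-statement established, (ii) follows as indicated, completing the proof.
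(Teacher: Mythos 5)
The paper does not prove Proposition~\ref{prop2.9.1}; it states it with a reference to \cite[Theorem~B.3.2]{HiSi} under the blanket remark at the start of Section~\ref{sec2} that proofs of basic height facts are omitted. So there is no in-paper argument to compare against, and the relevant question is just whether your reconstruction is sound. It is. Part~(i) is a routine, complete deduction from Definition~\ref{def2.9} and Lemma~\ref{lem2.8}(ii) via $D\sim(D+nH)-nH$, and nothing more needs to be said. For part~(ii) you locate the crux exactly where it is: $\phi_A\circ f$ realizes only the subsystem $f^*|A|\subseteq|f^*A|$, whereas $h_{f^*A}$ is built from $|f^*A|$ itself, and Lemma~\ref{lem2.8}(i) identifies heights only for two morphisms attached to the same \emph{complete} system. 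The sub-statement you isolate --- that a morphism $\psi$ attached to a base-point-free subsystem of $|M|$ satisfies $h\circ\psi=h_M+O(1)$ --- is precisely the genuine input the height machine needs beyond Lemma~\ref{lem2.8}, and your easy inequality (a coordinate projection never increases the height, straight from Definition~\ref{def2.2}) is correct.

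The only place where your write-up is a sketch rather than a proof is the reverse inequality in that sub-statement, and you say so yourself. Over function fields it can be made precise, and more cleanly than the number-field Nullstellensatz bound in \cite{HiSi}: choose a model $\mathcal X\to C'$ with a model $\mathcal M$ of $M$ and extensions $\tilde s_0,\dots,\tilde s_N$ of a compatible basis; the base scheme $B$ of $\langle\tilde s_0,\dots,\tilde s_m\rangle$ in $\mathcal X$ is supported over finitely many closed points $q_1,\dots,q_r\in C'$ because the generic fibre has empty base locus; blow up $B$ to get $\mu\colon\widetilde{\mathcal X}\to\mathcal X$ with exceptional divisor $E$. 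A section $\sigma$ of $\mathcal X\to C'$ corresponding to $P$ lifts to $\tilde\sigma$, and (via Proposition~\ref{prop2.12.3}) the discrepancy $h(\phi_M(P))-h(\psi(P))$ is $\frac{1}{[K(C'):k(t)]}\deg\tilde\sigma^*E$ up to $O(1)$. Since $\Supp E\subseteq\bigcup_i\mu^{-1}(\mathcal X_{q_i})$ and each full fibre $F_{q_i}$ has $\deg\tilde\sigma^*F_{q_i}=[K(C'):k(t)]$ for a section, one gets $E\le c\sum_iF_{q_i}$ for some fixed $c$, hence $\deg\tilde\sigma^*E\le cr[K(C'):k(t)]$, a bound independent of $P$. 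That is exactly the uniform boundedness you gesture at; with it in place your argument is complete.
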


Next, we give another description of height in terms of the degree of a divisor 
on a curve.  

\begin{lem}\label{lem2.10}
Let $X, S$ be integral schemes and $f: X \to S$ a morphism of finite type. 
Let $\eta$ be the generic point of $S$ and $X_\eta$ the generic fiber of $f$. 
For a rational section $\sigma: S  \dashrightarrow X$ of $f$ 
(i.e.~a rational map $\sigma: S  \dashrightarrow X$ such that 
$f \circ \sigma=\mathrm{id}_S$), 
$P \in X_\eta(k(\eta))$ denotes the rational point obtained by the base change 
of $\sigma$ along $k(\eta) \to S$. 
Then the mapping $\sigma \mapsto P$ gives a one-to-one correspondence
between the set of rational sections of $f$ and $X_\eta(k(\eta))$. 
\end{lem}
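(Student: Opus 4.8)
The plan is to exhibit explicit maps in both directions and check that they are mutually inverse. Write $K=k(\eta)$, and recall that, since $X_\eta=X\times_S\Spec K$, giving a point $P\in X_\eta(k(\eta))$ is the same as giving a morphism $\Spec K\to X$ over $S$. For the forward direction, let $\sigma\colon S\dashrightarrow X$ be a rational section, represented by a morphism $\sigma_U\colon U\to X$ over $S$ on a dense open $U\subseteq S$. Since $S$ is integral, every nonempty open contains $\eta$, so composing with the canonical morphism $\Spec K\to U$ produces a morphism $\Spec K\to X$ over $S$, i.e.\ a point $P\in X_\eta(K)$. This is independent of the chosen representative, because any two representatives of $\sigma$ agree on a dense open, which again contains $\eta$.

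For the reverse direction, let $P\colon\Spec K\to X$ be a morphism over $S$ and let $x\in X$ be its image point. Because $f\circ P$ is the canonical morphism $\Spec K\to S$, we get $f(x)=\eta$; moreover the composite $K=k(f(x))\to k(x)\xrightarrow{P}K$ is the identity, which forces $k(x)\xrightarrow{\ \sim\ }K$. Now choose affine opens $\Spec A\subseteq X$ containing $x$ and $\Spec B\subseteq S$ containing $\eta$ with $f(\Spec A)\subseteq\Spec B$; then $B$ is a domain with $\operatorname{Frac}(B)=K$, the morphism $f$ is given by a finite-type ring map $B\to A$, and $P$ corresponds to a $B$-algebra homomorphism $P^{\#}\colon A\to K$. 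Writing $A=B[a_1,\dots,a_r]$ and clearing the denominators of the elements $P^{\#}(a_i)\in K=\operatorname{Frac}(B)$, we obtain $0\neq g\in B$ with $P^{\#}(A)\subseteq B_g$; hence $P^{\#}$ factors as $A\to B_g\hookrightarrow K$. Since $B\to A\xrightarrow{P^{\#}}K$ equals the inclusion $B\hookrightarrow K$ (as $P$ lies over $S$) and $B_g\hookrightarrow K$ is injective, the composite $B\to A\to B_g$ is the localization morphism $B\to B_g$. Therefore $A\to B_g$ defines a section of $f$ over $\Spec B_g\subseteq S$, i.e.\ a rational section $\sigma$ of $f$; set $\sigma=\Psi(P)$.

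It remains to see that these two constructions are mutually inverse and well defined. By construction, $\Psi(P)$ restricted to $\Spec K$ recovers $P$, so one composite is the identity. Conversely, starting from a rational section $\sigma$, form $P$ as in the forward direction; its image point is $x=\sigma(\eta)$, and in an affine model as above both $\sigma$ and $\Psi(P)$ are given by $B$-algebra maps $A\to B_h$ (for suitable $h$) whose composition with the inclusion $B_h\hookrightarrow K$ equals $P^{\#}$. Since $B_h$ is a domain with fraction field $K$, this injectivity forces the two maps to agree after a further localization, so $\sigma$ and $\Psi(P)$ coincide as rational maps. The same embedding-into-$K$ argument shows that the affine-local construction of $\Psi$ is independent of the chosen affine opens and of $g$, so everything glues to well-defined maps on the global objects. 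The only genuinely substantive step is the \emph{spreading-out} in the reverse direction, where finiteness of type of $f$ is used to clear denominators; once a common affine model is fixed, all remaining verifications reduce to the observation that a ring homomorphism into a localization of the integral domain $B$ is detected inside its fraction field $K$, which both yields the section property and pins down the map uniquely.
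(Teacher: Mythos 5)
Your proof is correct. The paper does not actually include a proof of this lemma (it simply remarks that it ``follows from an elementary scheme-theoretic argument'' and moves on), and your argument --- restrict a representative to the generic point in one direction, and in the other direction spread a $K$-point out over a principal affine open by clearing denominators in the fraction field, using injectivity of $B_g\hookrightarrow K$ to verify the section property and the independence of all choices --- is exactly the standard scheme-theoretic argument the authors are alluding to.
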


This lemma follows from an elementary scheme-theoretic argument. 
When $X$ is a projective variety and $S=C$ is a (smooth projective) 
curve, Lemma \ref{lem2.10} is reduced to the following.

\begin{prop}\label{prop2.11}
Let $C$ be a curve over $k$ and set $K=K(C)$. 
\begin{itemize}
\item[(i)]
Let $\pi: X \to C$ be a surjective morphism from a projective variety $X$ to $C$ 
and $X_\eta$ the generic fiber of $\pi$.
Then $X_\eta(K)$ corresponds one-to-one to the set of sections of $\pi$. 
\item[(ii)]
Let $Y_k$ be a projective variety over $k$ and set $Y_K=Y_k \times_k K$.
Then $Y_K(K)$ corresponds one-to-one to the set of $k$-morphisms from $C$ to $Y_k$. 
\end{itemize}
\end{prop}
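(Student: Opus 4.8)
The plan is to derive both assertions from the general scheme-theoretic statement in Lemma \ref{lem2.10}, which already gives a bijection between rational sections of a finite-type morphism $f\colon X\to S$ of integral schemes and $X_\eta(k(\eta))$. First I would record that since $C$ is a smooth curve, its local rings $\mathcal O_{C,p}$ at closed points are discrete valuation rings, hence in particular regular of dimension one; this is the input that will upgrade ``rational section'' to ``section.'' So the heart of the argument is the valuative-criterion-type observation that any rational map $\sigma\colon C\dashrightarrow X$ extends to an everywhere-defined morphism when $C$ is a smooth curve and $X$ (equivalently $X\to C$) is proper.

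For part (i): apply Lemma \ref{lem2.10} with $S=C$, $f=\pi$, $\eta$ the generic point of $C$, and $K=k(\eta)=K(C)$. This yields a one-to-one correspondence between rational sections of $\pi$ and $X_\eta(K)$. It then remains to check that every rational section of $\pi$ is in fact a morphism $C\to X$. A rational section $\sigma$ is defined on a dense open $U\subseteq C$, so $C\setminus U$ is a finite set of closed points. At each such point $p$, $\mathcal O_{C,p}$ is a DVR with fraction field $K$; since $\pi$ is proper, the valuative criterion of properness produces a unique morphism $\Spec\mathcal O_{C,p}\to X$ lifting $\Spec K\to X$ and compatible with $\pi$. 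Gluing these local extensions with $\sigma|_U$ (uniqueness of the valuative lift guarantees agreement on overlaps) gives a morphism $C\to X$ with $\pi\circ\sigma=\id_C$, i.e.\ a genuine section. Conversely every section is a rational section, so the correspondence of Lemma \ref{lem2.10} restricts to the claimed bijection between $X_\eta(K)$ and sections of $\pi$.

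For part (ii): take $X=Y_K=Y_k\times_k K$, wait---rather take the total space to be $Y_k\times_k C$ with $\pi$ the second projection, which is surjective (as $Y_k\neq\emptyset$) and proper over $C$ (base change of the proper $k$-scheme $Y_k$), and whose generic fiber is $(Y_k\times_k C)_\eta=Y_k\times_k K=Y_K$. Part (i) then identifies $Y_K(K)$ with sections of $\pi\colon Y_k\times_k C\to C$. Finally, composing a section $C\to Y_k\times_k C$ with the first projection $\pr_1\colon Y_k\times_k C\to Y_k$ gives a $k$-morphism $C\to Y_k$; conversely a $k$-morphism $g\colon C\to Y_k$ yields the section $(g,\id_C)\colon C\to Y_k\times_k C$. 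These two operations are mutually inverse by the universal property of the fiber product, so $Y_K(K)$ corresponds one-to-one to $\Hom_k(C,Y_k)$.

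The only nontrivial point is the extension of a rational section to an everywhere-defined section; I expect that to be the main obstacle, and it is dispatched by the valuative criterion of properness once one observes that the bad locus on $C$ consists of finitely many closed points each of which has a DVR local ring. Everything else is a bookkeeping reduction to Lemma \ref{lem2.10} and the universal property of $\times_k$.
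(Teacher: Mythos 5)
Your proposal is correct and follows essentially the same route as the paper: reduce to Lemma~\ref{lem2.10}, observe that for a smooth curve $C$ every rational map to a projective variety (equivalently, every rational section of a proper morphism to $C$) is actually a morphism, and deduce (ii) from (i) via $\pr_C\colon Y_k\times_k C\to C$ together with the universal property of the product. The only cosmetic difference is that you justify the extension step by invoking the valuative criterion of properness at the finitely many missing closed points, whereas the paper simply cites the fact that $\codim I_\varphi\geq 2$ for a rational map from a smooth variety to a projective one; these are the same argument at different levels of packaging.
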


\begin{proof}
(i) Note that any rational map $\varphi: C \dashrightarrow X$ is in fact a morphism since 
$\codim I_\varphi \geq 2$. 
So the assertion follows from Lemma \ref{lem2.10}. 

(ii) Apply (i) to the projection $\pr_C: Y_k \times_k C \to C$. 
Note that the sections of $\pr_C$ correspond one-to-one to the $k$-morphisms 
from $C$ to $Y_k$. 
\end{proof}

Here is another description of height by the degree of divisors for projective spaces. 

\begin{prop}[cf.~{\cite[Lemma B.10.1]{HiSi}}]\label{prop2.12}
Let $C$ be a curve and set $K=K(C)$. 
Take $P \in \mathbb P^n(K)$ and let $g: C \to \mathbb P^n_k$ be 
the corresponding morphism. 
Then 
$$h(P)= \frac{1}{[K:k(t)]}\deg(g^*\mathcal O(1)).$$
\end{prop}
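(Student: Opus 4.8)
The plan is to reduce the assertion, via Definition~\ref{def2.6}, to the single identity
$$h_{K(C)}(P) = \deg\bigl(g^*\mathcal O(1)\bigr),$$
and then to compute the right-hand side directly. Write $P=(f_0:\cdots:f_n)$ with $f_i\in K=K(C)$, not all zero. By Proposition~\ref{prop2.11}(ii) the corresponding $k$-morphism $g\colon C\to\mathbb P^n_k$ is the rational map given in homogeneous coordinates by $[f_0:\cdots:f_n]$; this is a genuine morphism because $C$ is a smooth curve (the indeterminacy locus has codimension $\geq 2$, hence is empty). In particular $g^*(x_i/x_j)=f_i/f_j$ in $K(C)$ whenever $f_j\neq 0$, and $g^*x_j\neq 0$ exactly when $f_j\neq 0$ (the $j$-th coordinate of $P$ is nonzero iff $g(C)\not\subset\{x_j=0\}$).

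For the computation, pull back the tautological surjection $\mathcal O_{\mathbb P^n_k}^{\,n+1}\twoheadrightarrow\mathcal O(1)$ along $g$: the sections $s_i:=g^*x_i\in H^0(C,g^*\mathcal O(1))$ then generate $g^*\mathcal O(1)$ at every point of $C$, so they have no common zero, i.e. $\min_i v_p(s_i)=0$ for every closed point $p\in C$ (with the convention $v_p(0)=\infty$ of Definition~\ref{def2.1}). On the other hand, for indices $i,j$ with $f_i,f_j\neq 0$ we have $s_i/s_j=g^*(x_i/x_j)=f_i/f_j$, hence $\operatorname{div}(s_i)-\operatorname{div}(f_i)=\operatorname{div}(s_j)-\operatorname{div}(f_j)$; call this divisor $E$ (the relation $\operatorname{div}(s_i)=\operatorname{div}(f_i)+E$ also holds, with the convention above, for indices $i$ with $f_i=0$, since then $s_i=0$ as well). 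Taking the coefficient at $p$ in $\operatorname{div}(s_i)=\operatorname{div}(f_i)+E$ and minimizing over $i$ gives $0=\min_i v_p(s_i)=\bigl(\min_i v_p(f_i)\bigr)+\ord_p(E)$, so $\ord_p(E)=-\min_i v_p(f_i)$.

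Finally, fix any index $j$ with $f_j\neq 0$. Since $f_j$ is a nonzero rational function on the projective curve $C$, we have $\deg\operatorname{div}(f_j)=0$, and therefore
$$\deg\bigl(g^*\mathcal O(1)\bigr)=\deg\operatorname{div}(s_j)=\deg\operatorname{div}(f_j)+\deg E=\deg E=\sum_{p\in C}\Bigl(-\min_{0\le i\le n}v_p(f_i)\Bigr)=h_{K(C)}(P).$$
Dividing by $[K:k(t)]$ and invoking Definition~\ref{def2.6} yields the claim.

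I do not expect a serious obstacle; the only point needing a little care is the bookkeeping around coordinates $f_i$ that vanish identically (equivalently, the interaction of $v_p(0)=\infty$ with the minimum), which the argument above handles by noting the corresponding $s_i$ vanish too. An alternative, even more hands-on route — closer to \cite[Lemma B.10.1]{HiSi} — is to pick a linear form $\ell$ with $g(C)\not\subset\{\ell=0\}$ (possible since not every hyperplane contains the curve $g(C)$), to compute $\deg\bigl(g^*\mathcal O(1)\bigr)=\deg g^*\{\ell=0\}$ by working with a uniformizer $t_p$ at each $p$ to clear the common factor $t_p^{\,\min_i v_p(f_i)}$, and to use that $\deg\operatorname{div}\bigl(\ell(f_0,\dots,f_n)\bigr)=0$; this produces the same identity.
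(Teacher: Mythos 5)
Your proof is correct, and it takes a genuinely different route from the paper's. The paper normalizes by the $0$-th coordinate: after reducing (by descending induction on $n$) to the case where $g(C)$ lies in no coordinate hyperplane, it sets $U=C\setminus g^{-1}(H_0)$, rewrites $h_K(P)$ as a sum over $C\setminus U$, and uses the pointwise observation that for $p$ with $g(p)\in H_0$ some other coordinate is nonvanishing, so $\min_{i\geq 1}v_p(g^*H_i)=0$; this leaves exactly $\deg(g^*H_0)$. Your argument replaces the coordinate bookkeeping by the global observation that the pulled-back tautological surjection $\mathcal O_C^{\,n+1}\twoheadrightarrow g^*\mathcal O(1)$ forces $\min_i v_p(s_i)=0$ at every $p$ simultaneously, and then compares the section divisors $\operatorname{div}(s_i)$ with the rational-function divisors $\operatorname{div}(f_i)$ via a fixed divisor $E$ with $\deg E=\deg g^*\mathcal O(1)$. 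This avoids both the inductive reduction to smaller projective spaces (your identification $s_i=0\iff f_i=0$ handles coordinates vanishing identically uniformly) and the separate treatment of the constant case. The trade-off is that your version presupposes comfort with divisors of sections of a line bundle versus divisors of rational functions, whereas the paper's version stays entirely inside valuations of elements of $K(C)$; for the purposes of this paper either level of generality is fine, and your argument is arguably cleaner.
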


\begin{proof}
For $P=(f_0: \cdots: f_n) \in \mathbb P^n(K)$, $g$ is represented as 
$g(x)=(f_0(x): \cdots: f_n(x))$.
Let $H_i=(x_i)$ be the hyperplane of $\mathbb P^n_k$ associated to the $i$-th coordinate. 
Assume that $g(C) \subset H_i$ for some $i$. 
Then $f_i=0$, so 
$$h_K(P)= \sum_{p \in C} -\min_{j \neq i} v_p(f_j).$$
On the other hand, $\deg(g^*\mathcal O_{\mathbb P^n}(1)) = 
\deg(g^*\mathcal O_{H_i}(1))$. 
Therefore we can replace $\mathbb P^n_k$ by $H_i \cong \mathbb P^{n-1}_k$. 
If $g$ is a constant mapping, then $f_i \in k$ for all $i$ and 
$h_K(P)=\deg(g^*\mathcal O(1))=0$. 
So we may assume that $g(C)$ is not contained $H_i$ for all $i$. 
 
Set $U=\Spec A = C \setminus g^{-1}(H_0)$ and $f_i=(g|_U)^*(x_i/x_0) \in A$. 
Then $P=(1: f_1: \cdots: f_n)$. So
\begin{align*}
h_K(P) &= \sum_{p \in C} -\min \{v_p(1), v_p(f_1), \ldots, v_p(f_n) \} \\
&= \sum_{p \in C \setminus U} -\min \{v_p(1), v_p(f_1), \ldots, v_p(f_n) \} \\
&= \sum_{p \in C \setminus U} -\min_{1 \leq i \leq n} v_p(f_i) \\
&= \sum_{p \in C \setminus U} -\min_{1 \leq i \leq n} v_p(g^*(x_i/x_0)) \\
&= \sum_{p \in C \setminus U} -\min_{1 \leq i \leq n} v_p(g^*H_i)
+ \sum_{p \in C \setminus U} v_p(g^*H_0), \\
\end{align*}
where the second equality holds because $v_p(f_i) \geq 0$ for $p \in U$ 
and the third equality holds because $\displaystyle \min_{1 \leq i \leq n} v_p(f_i) <0$ 
for $p \in C \setminus U$. 
Obviously 
$$\sum_{p \in C \setminus U} v_p(g^*H_0) = 
\sum_{p \in C} v_p(g^*H_0) = \deg(g^*H_0).$$ 
For $p \in C \setminus U$, $v_p(g^*H_i) >0$ is equivalent to $g(p) \in H_i$. 
Since $g(p) \in H_0$, $g(p) \not\in H_i$ for some $i>0$ and then $v_p(g^*H_i)=0$. 
Hence $\displaystyle \min_{1 \leq i \leq n}v_p(g^*H_i)=0$ for all $p \in C \setminus U$. 
As a consequence, 
$$h(P)=\frac{1}{[K:k(t)]} \deg(g^*H_0) = \frac{1}{[K:k(t)]} \deg(g^*\mathcal O(1)).$$ 
\end{proof}

By Proposition \ref{prop2.12}, we can see that $h(P) \geq 0$ 
for any $P \in \mathbb P^n(K)$. 
Furthermore, 
for a rational point $P \in \mathbb P^n(K)$ corresponding to a morphism 
$g: C \to \mathbb P_k^n$, $P \in \mathbb P^n(k)$ if and only if $g$ is a constant map. 
So we obtain the following.

\begin{prop}\label{prop2.12.1}
\begin{itemize}
\item[ ]
\item[(i)]
$h(P) \geq 0$ for any $P \in \mathbb P^n(\overline{k(t)})$. 
\item[(ii)]
For $P \in \mathbb P^n(\overline{k(t)})$, 
$h(P) = 0$ if and only if $P \in \mathbb P^n(k)$. 
\end{itemize}
\end{prop}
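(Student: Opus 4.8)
The plan is to reduce both statements to Proposition~\ref{prop2.12}. Given $P \in \mathbb P^n(\overline{k(t)})$, I would choose a finite extension $K/k(t)$ with $P \in \mathbb P^n(K)$ and write $K = K(C)$ for a curve $C$ over $k$; let $g \colon C \to \mathbb P^n_k$ be the corresponding morphism (Proposition~\ref{prop2.11}(ii)). By Definition~\ref{def2.6} and Proposition~\ref{prop2.12},
\[
h(P) = \frac{1}{[K:k(t)]}\deg\bigl(g^*\mathcal O(1)\bigr),
\]
so everything becomes a statement about the degree of $g^*\mathcal O(1)$ on the curve $C$, and in particular the answer is independent of the auxiliary choices by Lemma~\ref{lem2.5}.

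For (i), I would note that $\mathcal O(1)$ is very ample, hence base point free, on $\mathbb P^n_k$. If $g$ is constant the degree is $0$; otherwise pick a hyperplane $H$ with $g(C)\not\subset H$, so that $g^*H$ is a well-defined effective divisor on $C$. Then $\deg\bigl(g^*\mathcal O(1)\bigr) = \deg(g^*H) \geq 0$, whence $h(P) \geq 0$.

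For (ii), one implication is immediate: if $P \in \mathbb P^n(k)$, then under the bijection $\mathbb P^n(K) \cong \Hom_k(C, \mathbb P^n_k)$ the morphism $g$ is constant, so $g^*\mathcal O(1) \cong \mathcal O_C$ and $h(P) = 0$. For the converse, suppose $h(P) = 0$, i.e.\ $\deg\bigl(g^*\mathcal O(1)\bigr) = 0$. Since $g^*\mathcal O(1)$ is globally generated (being the pullback of a globally generated sheaf) and has degree $0$ on the smooth projective curve $C$, any nonzero global section has an effective zero divisor of degree $0$, hence vanishes nowhere; thus $g^*\mathcal O(1) \cong \mathcal O_C$ and $H^0\bigl(C, g^*\mathcal O(1)\bigr) = k$. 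The homogeneous coordinates defining $g$ are then all constant, so $g$ is a constant morphism and $P \in \mathbb P^n(k)$.

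The argument is short, and the only step needing a little care is the converse of (ii): there one must invoke the elementary fact that a globally generated line bundle of degree $0$ on a smooth projective curve is trivial, in order to conclude that $g$ is constant.
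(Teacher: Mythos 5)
Your proof is correct and follows essentially the same route as the paper: both reduce via Proposition~\ref{prop2.12} to the nonnegativity of $\deg\bigl(g^*\mathcal O(1)\bigr)$ and the observation that this degree vanishes if and only if the corresponding morphism $g\colon C\to\mathbb P^n_k$ is constant, which in turn is equivalent to $P\in\mathbb P^n(k)$. You simply spell out the elementary fact (a globally generated degree-$0$ line bundle on a smooth projective curve is trivial) that the paper leaves implicit.
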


We give a description of height by the degree of divisors for a projective variety 
over $\overline{k(t)}$ which has a model over a curve or, more strongly, over $k$. 

\begin{defn}\label{defn2.12.2}
Let $X$ be a projective variety over $\overline{k(t)}$ and $H$ 
an ample Cartier divisor on $X$.
We define a function $\tilde h_H : X(\overline{k(t)}) \to \mathbb R_{\geq 0}$ as follows.
Fix  a model 
$(X_C \overset{\pi}{\to} C, H_C)$ of $(X, H)$ over a curve $C$, that is, 
a projective variety $X_C$ over $k$ with a surjection $\pi:X_C \to C$ whose 
geometric generic fiber is $X$, and a $\pi$-ample Cartier divisor $H_C$ on $X_C$ 
such that 
$(X \to X_C)^*H_C \sim H$.
For any $P \in X(\overline{k(t)})$, take a curve $C_1$ with $K(C_1) \supset K(C)$ and 
the section $\sigma_1$ of $\pi_{C_1}: X_C \times_C C_1 \to C_1$ corresponding to $P$,
and set $H_{C_1}=(X_C \times_C C_1 \to X_C)^*H_C$ and
$$\tilde h_H(P)= \frac{1}{[K(C_1):k(t)]}\deg(\sigma_1^*H_{C_1}).$$
\end{defn}

\begin{prop}\label{prop2.12.3}
Notation is as in Definition \ref{defn2.12.2}.
Then $\tilde h_H$ is a well-defined height function associated to $H$.
\end{prop}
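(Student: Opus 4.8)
The plan is to establish two things: (a) for a fixed model $(X_C\xrightarrow{\pi}C,H_C)$ of $(X,H)$, the number $\tilde h_H(P)$ does not depend on the auxiliary curve $C_1$ used to compute it, so that $\tilde h_H$ really is a function on $X(\overline{k(t)})$; and (b) this function differs from a height function $h_H$ associated to $H$ (in the sense of Definition \ref{def2.9}) by a bounded function. Since such an $h_H$ is itself only pinned down up to $O(1)$, part (b) simultaneously shows that replacing the model changes $\tilde h_H$ only by $O(1)$, which is the relevant notion of well-definedness in Definition \ref{defn2.12.2}.

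For (a) I would argue exactly as in Lemma \ref{lem2.5}. The section $\sigma_1$ is forced by $P$ via Lemma \ref{lem2.10}, so the only freedom is the choice of $C_1$. Given two admissible choices $C_1,C_2$, pick a curve $C_3$ whose function field contains the compositum $K(C_1)\cdot K(C_2)$ inside $\overline{k(t)}$, with the resulting finite maps $\phi_i\colon C_3\to C_i$ over $C$. Then the section $\sigma_3$ of $\pi_{C_3}$ attached to $P$ is the base change of $\sigma_i$ along $\phi_i$, and $H_{C_3}$ is the pullback of $H_{C_i}$, so the projection formula gives $\deg(\sigma_3^*H_{C_3})=\deg(\phi_i)\deg(\sigma_i^*H_{C_i})$ while $[K(C_3)\colon k(t)]=\deg(\phi_i)\,[K(C_i)\colon k(t)]$; hence the two normalized degrees agree (both equal the one computed on $C_3$).

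For (b) I would fix a model $(X_C\xrightarrow{\pi}C,H_C)$, choose an ample divisor $A$ on $C$, and take $m\ge 1$ large enough that $D:=m(H_C+\pi^*A)$ is very ample on $X_C$; this is possible because $H_C$ is $\pi$-ample and $\pi$ is projective. Let $\iota\colon X_C\hookrightarrow\mathbb P^N_k$ be the associated closed immersion, so $\iota^*\mathcal O(1)\sim D$. The key observation is that $\pi^*A$ pulls back to the trivial class on the geometric generic fiber — its support is a union of fibers of $\pi$, which the generic point avoids — so forming the generic fiber of $\iota$ and base-changing to $\overline{k(t)}$ yields a morphism $\bar\iota\colon X\to\mathbb P^N_{\overline{k(t)}}$ with $\bar\iota^*\mathcal O(1)\sim mH$; by Proposition \ref{prop2.9.1} this gives $h\circ\bar\iota = m\,h_H + O(1)$. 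Now for $P\in X(\overline{k(t)})$ with admissible $C_1$ and section $\sigma_1$, composing $\sigma_1$ with $X_C\times_C C_1\to X_C\xrightarrow{\iota}\mathbb P^N_k$ produces a $k$-morphism $g_1\colon C_1\to\mathbb P^N_k$ which, under the dictionary of Proposition \ref{prop2.11}, represents the point $\bar\iota(P)$. Proposition \ref{prop2.12} then gives $h(\bar\iota(P))=\tfrac{1}{[K(C_1):k(t)]}\deg(g_1^*\mathcal O(1))$, while $g_1^*\mathcal O(1)\sim m\,\sigma_1^*H_{C_1}+m\,\sigma_1^*\pi_{C_1}^*(A_{C_1})$, and $\pi_{C_1}\circ\sigma_1=\mathrm{id}_{C_1}$ forces the second summand to equal $m\,A_{C_1}$, of degree $m\deg(\phi_1)\deg(A)$ with $\phi_1\colon C_1\to C$. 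Dividing through by $[K(C_1):k(t)]=\deg(\phi_1)[K(C):k(t)]$ shows that $h(\bar\iota(P))$ equals $m\,\tilde h_H(P)$ plus the $P$-independent constant $\tfrac{m\deg A}{[K(C):k(t)]}$, so $m\,\tilde h_H = h\circ\bar\iota + O(1) = m\,h_H + O(1)$, whence $\tilde h_H = h_H + O(1)$.

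The step that needs genuine care — and the only place I expect a real obstacle — is the identification of the morphism $g_1$ with the point $\bar\iota(P)$: one must check that forming the generic fiber of $\iota$, base-changing the section $\sigma_1$ along $C_1\to C$, and the section-versus-point correspondence of Proposition \ref{prop2.11} are mutually compatible, i.e.\ that the relevant square of schemes commutes after base change and the bijections of Proposition \ref{prop2.11} are functorial in an obvious morphism. This is a diagram chase with fibered products rather than anything deep, but it is easy to get turned around. Everything else reduces to the projection formula and elementary facts about degrees of line bundles on curves.
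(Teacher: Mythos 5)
Your argument is correct and is essentially the paper's proof. Part (a) matches the paper's well-definedness step (the paper reduces WLOG to $K(C_1)\subset K(C_2)$ rather than passing to a common cover $C_3$, but that is purely cosmetic), and part (b) carries out the same computation via Proposition \ref{prop2.12}: the paper chooses $N$ with a morphism $\iota_C\colon X_C\to\mathbb P^n_k\times C$ over $C$ satisfying $\iota_C^*\mathcal O(1)\sim NH_C+\pi^*D_C$ for some divisor $D_C$ on $C$ and then composes with $\pr_{\mathbb P^n_k}\circ\iota_{C_1}\circ\sigma_1$, whereas you take an absolute closed immersion $\iota\colon X_C\hookrightarrow\mathbb P^N_k$ via $m(H_C+\pi^*A)$ with $A$ ample on $C$; once you project and compose with $\sigma_1$ these are the same $k$-morphism $C_1\to\mathbb P^N_k$, and the projection-formula bookkeeping is identical. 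One small wrinkle: $m(H_C+\pi^*A)$ need not be very ample for $m$ large unless $A$ is chosen sufficiently positive; it is cleaner to ask, as the paper does, for $mH_C+\pi^*D_C$ to be (relatively) very ample with $D_C$ an arbitrary divisor on $C$, rather than insisting the coefficient on $H_C$ and on $\pi^*A$ coincide. The compatibility you flag at the end (functoriality of the section/point dictionary under base change and under $\iota$) is also taken as implicit in the paper and is a routine check.
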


\begin{proof}
Take any point $P \in X(\overline{k(t)})$.
Take curves $C_i$ with $K(C_i)\supset K(C)$ and 
the sections $\sigma_i$ of $\pi_{C_i}: X_{C_i}=X_C \times_C C_i \to C_i$ for $i=1,2$.
To see the well-definedness of $\tilde h_H$,
we may assume that $K(C_2) \supset K(C_1)$.
$$
\xymatrix{
C_2 \ar[d]^{\sigma_2} \ar[r]^{\phi_2} & C_1 \ar[d]^{\sigma_1} \\ 
X_{C_2} \ar[d]^{\pi_2} \ar[r]^{\psi_2}   & X_{C_1} \ar[d]^{\pi_1} \ar[r]^{\psi_1}  
& X_C \ar[d]^{\pi} \\
C_2 \ar[r]^{\phi_2} & C_1 \ar[r]^{\phi_1} & C
}
$$
Set $H_{C_1}=\psi_1^*H_C$ and $H_{C_2}=\psi_2^*H_{C_1}$.
Then 
\begin{align*}
\frac{\deg(\sigma_2^*H_{C_2})}{[K(C_2):k(t)]}
&= \frac{\deg(\phi_2^*\sigma_1^*H_{C_1})}{[K(C_2):K(C_1)][K(C_1):k(t)]}\\
&= \frac{\deg(\sigma_1^*H_{C_1})}{[K(C_1):k(t)]}.
\end{align*}
So it follows that $\tilde h_H(P)$ is well-defined.

Take a sufficiently large integer $N$ such that there is a morphism 
$\iota_C: X_C \to \mathbb P^n_k \times C$ over $C$ with 
$\iota_C^*\mathcal O_{\mathbb P^n_C}(1) \sim_\pi NH_C$.
Take a Cartier divisor $D_C$ on $C$ such that 
$\iota_C^*\mathcal O_{\mathbb P^n_C}(1) \sim NH_C + \pi^*D_C$.
Let $\iota:X \to \mathbb P^n_{\overline{k(t)}}$ be the base change of $\iota_C$ 
by $\Spec \overline{k(t)} \to C$.
Then the function $\frac{1}{N}h \circ \iota$ is a height function associated to $H$.
$\iota(P) \in \mathbb P^n(\overline{k(t)})$ corresponds to the morphism 
$\pr_{\mathbb P^n_k} \circ \iota_{C_1} \circ \sigma_1: C_1 \to \mathbb P^n_k$,
where $\iota_{C_1}$ be the base change of $\iota_C$ by $C_1 \to C$.
We compute 
\begin{align*}
\frac{1}{N}h(\iota(P))
&= \frac{\deg((\pr_{\mathbb P^n_k} \circ \iota_{C_1} \circ \sigma_1)^*
\mathcal O_{\mathbb P^n_k} (1))}{N[K(C_1):k(t)]}
 \ \ \ \mathrm{(by\ Proposition\ \ref{prop2.12})} \\
&= \frac{\deg(\sigma_1^* (NH_{C_1}+\psi_1^*\pi^*D_C))}{N[K(C_1):k(t)]}  \\
&= \frac{\deg(\sigma_1^*(NH_{C_1}))+[K(C_1):K(C)]\deg(D_C)}{N[K(C_1):k(t)]} \\
&= \tilde h_H(P)
+ \frac{\deg(D_C)}{N[K(C):k(t)]}.
\end{align*}
So $\tilde h_H$ is a height function associated to $H$.
\end{proof}


\section{Arithmetic degrees for dynamical systems over function fields}\label{sec3}

\begin{defn}\label{def3.1}
A \textit{dynamical system over a field $K$} is a pair $(X, f)$ of
a smooth projective variety $X$ over $K$ and 
a dominant rational self-map $f: X \dashrightarrow X$ over $K$. 

Let $(X, f)$ be a dynamical system over a field $K$. 
Set 
$$X_f = \{ P \in X(K) | f^m(P) \not\in I_f 
\mathrm{\ for\ every\ }m \geq 0\}.$$
For $P \in X_f$, set 
$$O_f(P) = \{ f^m(P) | m \in \mathbb Z_{\geq 0} \},$$ 
which we call the (\textit{forward}) \textit{$f$-orbit of $P$}. 

Let $H$ be an ample divisor on $X$.
The (\textit{first}) \textit{dynamical degree of $f$} is the number 
$$\delta_f = \lim_{m \to \infty}((f^m)^*H \cdot H^{\dim X-1})^{1/m} \ \in [1, \infty).$$
It is known that the limit converges and $\delta_f$ is independent of 
the choice of $H$. 
\end{defn}

\begin{defn}\label{def3.2}
Let $(X,f)$ be a dynamical system over an algebraically closed field $K$ where 
heights are well-defined (e.g.~$\overline{k(t)}$ or $\overline{\mathbb Q}$).
Take an ample Cartier divisor $H$ on $X$ 
and a rational point $P \in X_f$. 
The \textit{arithmetic degree of $f$ at $P$} is defined as 
$$\alpha_f(P) = \lim_{m \to \infty} h_H^+(f^m(P))^{1/m},$$
where $h_H$ is a height function associated to $H$ and 
$h_H^+(P)= \max \{ h_H(P), 1 \}$.
Note that we do not know whether the limit converges (cf.~Conjecture \ref{conj3.4} (i)).
Similarly, $\overline \alpha_f(P), \underline \alpha_f(P)$ are defined as
$$\overline \alpha_f(P) = \limsup_{m \to \infty} h_H^+(f^m(P))^{1/m},$$
$$\underline \alpha_f(P) = \liminf_{m \to \infty} h_H^+(f^m(P))^{1/m}.$$
\end{defn}

\begin{prop}[cf.~{\cite[Proposition 14]{KaSi2}}]\label{prop3.3}
In the notation of Definition \ref{def3.2}, 
$\alpha_f(P)$, $\overline \alpha_f(P)$ and $\underline \alpha_f(P)$ 
are independent of the choices of $H$ and $h_H$. 
\end{prop}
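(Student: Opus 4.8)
The plan is to reduce the statement to the elementary observation that $\alpha_f(P)$, $\overline\alpha_f(P)$ and $\underline\alpha_f(P)$ depend on the chosen height function only through its comparability class, and then to verify that any two height functions attached to ample Cartier divisors on $X$ are mutually comparable up to a multiplicative and an additive constant.

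First I would treat the dependence on the auxiliary function $h_H$ for a fixed ample $H$. By Definition \ref{def2.9} and Lemma \ref{lem2.8}, any two height functions $h_H, h_H'$ associated to $H$ differ by $O(1)$, and since $t \mapsto \max\{t,1\}$ is $1$-Lipschitz this gives $h_H^+ = h_H'^+ + O(1)$. Evaluating at $f^m(P)$: if the sequence $h_H^+(f^m(P))$ is bounded, then so is $h_H'^+(f^m(P))$ and all the $m$-th roots tend to $1$; otherwise, writing $C$ for the implied constant, one has $(1+C)^{-1}\,h_H^+(f^m(P)) \le h_H'^+(f^m(P)) \le (1+C)\,h_H^+(f^m(P))$ because both sides are $\ge 1$, so the $m$-th roots differ by a factor lying in $[(1+C)^{-1/m},(1+C)^{1/m}]$, which converges to $1$. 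In all cases $\limsup$, $\liminf$, and the limit when it exists, are unchanged.

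Next I would handle the dependence on $H$. Given ample $H_1, H_2$, choose an integer $n \ge 1$ so large that both $nH_1 - H_2$ and $nH_2 - H_1$ are base point free. A height function associated to a base point free divisor is bounded below, since it equals $h\circ\phi + O(1)$ for a morphism $\phi$ to a projective space, and $h \ge 0$ there by Proposition \ref{prop2.12.1} (i); likewise each $h_{H_i}$ is bounded below, using a very ample multiple. By additivity of heights (Proposition \ref{prop2.9.1} (i)), $n\,h_{H_1} - h_{H_2}$ and $n\,h_{H_2} - h_{H_1}$ are bounded below, and combining these linear inequalities with the lower bounds on $h_{H_1}$ and $h_{H_2}$ and passing to $h^+$ yields constants $c_1, c_2, C > 0$ with $c_1\,h_{H_1}^+ - C \le h_{H_2}^+ \le c_2\,h_{H_1}^+ + C$ on all of $X(\overline{k(t)})$. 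Applying this at $f^m(P)$ and running the same dichotomy as before — the orbit heights for $H_1$ are bounded, whence those for $H_2$ are too, or else, along any subsequence tending to infinity, the ratio $h_{H_2}^+(f^m(P))/h_{H_1}^+(f^m(P))$ is eventually trapped in a fixed compact subinterval of $(0,\infty)$ — shows that the $m$-th roots have the same $\limsup$ and $\liminf$ for $H_1$ and for $H_2$, and that the limit exists for one choice precisely when it does for the other, with the same value.

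I do not anticipate a genuine obstacle here; the only mild point is the passage from additive $O(1)$ comparisons of heights to multiplicative comparisons of the truncations $h^+$, together with the remark that raising to the power $1/m$ annihilates any fixed multiplicative constant. The single input special to the function field setting is Proposition \ref{prop2.12.1} (i), the positivity of heights on projective space over $\overline{k(t)}$, which is what guarantees that heights of base point free (hence of ample) divisors are bounded below — the role played over number fields by Northcott-type positivity estimates.
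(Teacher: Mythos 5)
Your proof is correct and follows essentially the same strategy as the paper's: the change of $h_H$ for fixed $H$ is handled by $O(1)$-comparability, and the change of $H$ is handled by choosing a large multiple $N$ so that $NH_1-H_2$ is base point free (the paper takes it ample), invoking additivity of heights and the lower bound on heights of base point free (hence ample) divisors afforded by Proposition \ref{prop2.12.1}~(i). The only cosmetic difference is that you carry out the two-sided multiplicative comparison of the truncations $h^+$ explicitly, whereas the paper runs a one-sided inequality $h^+_{NH}\ge h^+_{H'}$ and then appeals to symmetry.
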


\begin{proof}
It is obvious that $\alpha_f(P)$, $\overline \alpha_f(P)$ and  $\underline \alpha_f(P)$ are 
independent of 
the choice of $h_H$ for a fixed ample Cartier divisor $H$. 

Let $H'$ be another ample Cartier divisor on $X$. 
Take a sufficiently large integer $N$ such that $NH-H'$ is ample. 
Then 
\begin{align*}
\lim_{m \to \infty} h_H^+(f^m(P))^{1/m} 
&= \lim_{m \to \infty} (\frac{1}{N}h_{NH}^+(f^m(P)))^{1/m} \\ 
&= \lim_{m \to \infty} h_{NH}^+(f^m(P))^{1/m} \\
&= \lim_{m \to \infty} \max \{ h_{NH-H'}(f^m(P))+ h_{H'}(f^m(P)), 1 \}^{1/m} \\
&\geq \lim_{m \to \infty} h_{H'}^+(f^m(P))^{1/m}, 
\end{align*}
where we take $h_{NH-H'}$ as a non-negative function. 
Similarly, 
$$\lim_{m \to \infty} h_{H'}^+(f^m(P))^{1/m} 
\geq \lim_{m \to \infty} h_H^+(f^m(P))^{1/m}.$$ 
So $\alpha_f(P)$ is independent of $H$. 
The proofs for $\overline \alpha_f(P)$ and $\underline \alpha_f(P)$ are similar. 
\end{proof}

Now we introduce the following conjecture (over $\overline{\mathbb Q}$) 
given by Kawaguchi and Silverman (see \cite{KaSi2}).

\begin{conj}[The Kawaguchi--Silverman conjecture over $\overline{\mathbb Q}$]\label{conj3.4}
Let $(X, f)$ be a dynamical system over $\overline{\mathbb Q}$. 
\begin{itemize}
\item[(i)] The limit defining $\alpha_f(P)$ exists for every $P \in X_f$. 
\item[(ii)] $\alpha_f(P)$ is an algebraic integer for every $P \in X_f$. 
\item[(iii)] $\{ \alpha_f(P) | P \in X_f \}$ is a finite set. 
\item[(iv)] 
Take $P \in X_f$. 
Assume that 
$O_f(P)$ is Zariski dense in $X$. 
Then 
$\alpha_f(P) = \delta_f$.  
\end{itemize} 
\end{conj}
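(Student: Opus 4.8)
Conjecture~\ref{conj3.4} is open in general --- over $\overline{\mathbb Q}$ only fragments are known, and over $\overline{k(t)}$ part~(iv) is outright false (Example~\ref{ex3.4.2}) --- so what follows is not a proof but the natural program one would run, which is also the template for the (necessarily conditional, or geometrically reformulated) results of this paper. For part~(i) the plan is to control the growth of $h_H^{+}(f^m(P))$ through the linear algebra of the pullback $f^{*}$ on $\mathrm{NS}(X)_{\mathbb R}$ --- its Jordan decomposition and the eigenspace attached to the spectral radius $\delta_f$ --- fed into the Weil height machine. For a \emph{morphism} $f$ the height machine applies with no indeterminacy error, $h_H\circ f=h_{f^{*}H}+O(1)$ (Proposition~\ref{prop2.9.1}(ii)), and one can carry this through to show $\underline\alpha_f(P)=\overline\alpha_f(P)$; for a rational map one would first pass to an \emph{algebraically stable} birational model, on which $(f^m)^{*}=(f^{*})^m$, the obstruction being that such models are not known to exist once $\dim X\ge 3$.

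For parts~(ii) and~(iii) the plan is to show that $\alpha_f(P)$ is always $|\lambda|$ for some eigenvalue $\lambda$ of $f^{*}$. Since $f^{*}$ acts on the lattice $\mathrm{NS}(X)$ by an integer matrix, its eigenvalues --- and their absolute values, being of the form $\sqrt{\lambda\bar\lambda}$ --- are algebraic integers, which gives~(ii); and there are only finitely many of them, which gives~(iii). The content is pinning down which $|\lambda|$ occurs: morally it is the spectral radius of $f^{*}$ on the smallest $f^{*}$-stable subspace of $\mathrm{NS}(X)_{\mathbb R}$ ``seen'' by the orbit of $P$, and making this precise is entangled with part~(iv).

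The heart of the matter is~(iv). One inequality, $\alpha_f(P)\le\delta_f$, holds unconditionally over any field with a reasonable height theory; over function fields it is reproved in this paper (Theorem~\ref{thm4}) by reading $h_H(f^m(P))$ as the degree of a pullback divisor on a curve and estimating it against $(f^m)^{*}H\cdot H^{\dim X-1}$. For the reverse, $\underline\alpha_f(P)\ge\delta_f$, the plan is: by a Perron--Frobenius / Birkhoff-cone argument, produce a nonzero nef class $D$ with $f^{*}D\equiv\delta_f\,D$ modulo effective corrections, and then leverage Zariski density of $O_f(P)$ to prevent $h_D(f^m(P))$ from being $o(\delta_f^{m})$. \textbf{The main obstacle} is ruling out \emph{height cancellation}: a priori a Zariski-dense orbit can still have $h_D(f^m(P))$ bounded. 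Over $\overline{k(t)}$ this genuinely happens (Example~\ref{ex3.4.2}) --- in the geometric dictionary $P$ is a section $\sigma$ of a model $\pi\colon X_C\to C$, $h_D(f^m(P))$ is the degree of the $m$-th iterated section against a fixed divisor, and the pathology is a family of sections of \emph{uniformly bounded degree} whose union is Zariski dense. This is exactly why, over function fields, we do not attempt~(iv) verbatim but instead replace ``$O_f(P)$ Zariski dense'' by a geometric condition on $\sigma$ that forces the iterated sections to grow in degree (Theorem~\ref{thm_suff}), and why the existence statement we can prove (Theorem~\ref{thm_orbit}) is engineered to produce orbits that do not cancel.
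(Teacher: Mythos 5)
The statement is labeled a conjecture, and the paper does not prove it; there is therefore no paper proof to compare yours against, and you correctly recognized this from the outset. Your survey is accurate and lines up with the paper's own remarks: Remark~\ref{rem3.5} records that (i)--(iii) are known for morphisms over global fields of characteristic zero, Theorem~\ref{thm4} is the unconditional inequality $\overline\alpha_f(P)\le\delta_f$, Example~\ref{ex3.4.2} shows that (iv) fails verbatim over $\overline{k(t)}$, and Theorems~\ref{thm_suff}, \ref{thm5.0.1}, \ref{thm5.2}, and~\ref{thm_orbit} are exactly the geometric substitutes the paper offers for (iv) in the function-field setting. Your identification of ``height cancellation'' along a Zariski-dense orbit as the essential obstruction to (iv), and the geometric reading of it as a bounded-degree family of sections, is the right diagnosis and is precisely what Example~\ref{ex3.4.2}(ii) realizes. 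Nothing to correct here: you neither claimed a proof of an open conjecture nor misstated what is known.
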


\begin{rem}\label{rem3.5}
In the case when 
$f$ is a morphism (see \cite[Theorem 2]{KaSi3}),
Kawaguchi and Silverman showed that  (i), (ii), and (iii) of 
Conjecture \ref{conj3.4} hold even over global fields of characteristic zero.
\end{rem}

\begin{rem}\label{rem3.7}
It is known that Conjecture \ref{conj3.4} holds in several cases 
(cf.~\cite[Remark 1.3]{MSS}).
\end{rem}

As the number field case, we consider:

\begin{prob}\label{prob3.4.1}
Let $(X, f)$ be a dynamical system over $\overline{k(t)}$. 
Take a point $P \in X_f$. 
When the equality $\alpha_f(P)=\delta_f$ holds?
\end{prob}

The following examples show that the function field case of Conjecture \ref{conj3.4} (iv) 
is not true.

\begin{ex}\label{ex3.4.2}
(i) Let $f: \mathbb P_{\overline{k(t)}}^1 \to \mathbb P_{\overline{k(t)}}^1$ be a surjective 
endomorphism with $\delta_f >1$.
Take a $k$-valued non-preperiodic point $P \in \mathbb P^1(k)$.
Then $O_f(P)$ is Zariski dense in $\mathbb P_{\overline{k(t)}}^1$,
but $\alpha_f(P)=1 < \delta_f$.

(ii) Define $f: \mathbb A_{\overline{k(t)}}^2 \to \mathbb A_{\overline{k(t)}}^2$ 
as $f(x,y)=(x^2,y^3)$.
Then $f$ naturally extends to the morphism 
$f: \mathbb P_{\overline{k(t)}}^2 \to \mathbb P_{\overline{k(t)}}^2$ 
and $\delta_f=3$.
Take a point $P=(t,2) \in \mathbb A^2(k(t))$.
Then $f^m(P)=(t^{2^m}, 2^{3^m})$ and 
$$\alpha_f(P)=\lim_{m \to \infty} \max \{\deg(t^{2^m}), \deg(2^{3^m}) \}^{1/m}
= \lim_{m \to \infty} (2^m)^{1/m} = 2.$$

We show that $O_f(P)=\{(t^{2^m}, 2^{3^m})\}_{m=0}^\infty$ is dense in 
$\mathbb P_{\overline{k(t)}}^2$.
It is enough to show that $O_f(P)$ is dense in $\mathbb A_{k(t)}^2$.
Suppose $O_f(P)$ is contained in the zero locus of a polynomial 
$\phi(t,x,y) \in k(t)[x,y]$.
Multiplying $\phi$ with a polynomial in $k[t]$, we may assume that 
$\phi \in k[t,x,y]$.
Set $\phi(t,x,y)=\phi_r(t,y)x^r + \phi_{r-1}x^{r-1}+ \cdots + \phi_0(t,y)$, $\phi_r(t,y) \neq 0$.
By assumption, $\phi(t,t^{2^m},2^{3^m})=0$ as a polynomial in $k[t]$.
Since 
$$\deg(t^{2^mr}) 
> \deg(\phi(t,t^{2^m},2^{3^m})-\phi_r(t,2^{3^m})t^{2^mr})
= \deg(-\phi_r(t,2^{3^m})t^{2^mr})$$ 
for sufficiently large $m$,
it follows that $\phi_r(t,2^{3^m})t^{2^mr}=0$ as a polynomial in $k[t]$ for sufficiently large 
$m$.
Therefore $\phi_r(t,y)=0$ as a polynomial in $k[t,y]$, which is a contradiction.
So $O_f(P)$ is Zariski dense in $\mathbb P_{\overline{k(t)}}^2$.
\end{ex}

In the rest of this article, we will find some other conditions for an arithmetic degree 
to coincide with the dynamical degree.

\section{A fundamental inequality}\label{sec4}

There is a fundamental inequality between arithmetic degrees and dynamical degrees: 

\begin{thm}[{\cite[Theorem 4]{KaSi2}}~and~{\cite{Mat}}]\label{thm4}
$K$ denotes an algebraically closed field where heights are well-defined. 
Let $(X, f)$ be a dynamical system over $K$.   
Then $$\overline \alpha_f(P) \leq \delta_f$$ 
holds for any $P \in X_f$. 
\end{thm}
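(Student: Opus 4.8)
Fix an ample Cartier divisor $H$ on $X$ and a model $(\mathcal X \overset{\pi}{\to} C, \mathcal H)$ of $(X,H)$ over a curve $C$, together with a model $F: \mathcal X \dashrightarrow \mathcal X$ of $f$ over $C$ (obtained after a base change of $C$ if necessary, so that both $f$ and finitely many iterates are defined over $K(C)$; since $\overline{\alpha}_f(P)$ is intrinsic, replacing $C$ by a cover is harmless by Lemma~\ref{lem2.5}).  Given $P \in X_f$, let $\sigma: C' \to \mathcal X$ be the section corresponding to $P$ after a further finite base change $C' \to C$, so that by Proposition~\ref{prop2.12.3} (or Definition~\ref{defn2.12.2}) we have, up to $O(1)$ independent of $m$,
\[
h_H(f^m(P)) = \frac{1}{[K(C'):k(t)]}\deg\bigl((F^m\circ\sigma)^*\mathcal H\bigr) + O(1),
\]
where $F^m\circ\sigma: C' \dashrightarrow \mathcal X$ is a morphism since $C'$ is a smooth curve (a rational map from a smooth curve to a projective variety is everywhere defined).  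Thus the whole problem is reduced to bounding the growth of intersection numbers $\deg\bigl((F^m\circ\sigma)^*\mathcal H\bigr)$ as $m \to \infty$.

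**Second, I would control these intersection numbers via a resolution of the iterates of $F$.**  For each fixed $m$, choose a smooth projective resolution of indeterminacy $p_m, q_m: \Gamma_m \to \mathcal X$ with $q_m = F^m \circ p_m$ (on the dense open where $F^m$ is defined), and pull $\sigma$ back to a curve $\tilde\sigma$ mapping into $\Gamma_m$.  Then $\deg\bigl((F^m\circ\sigma)^*\mathcal H\bigr) = (q_m^*\mathcal H \cdot \tilde\sigma)$.  Writing $q_m^*\mathcal H$ in terms of $p_m^*$ of divisors on $\mathcal X$: there is a divisor class $D_m$ on $\mathcal X$ with $q_m^*\mathcal H \le p_m^*D_m + (\text{effective exceptional})$, and $D_m$ can be taken so that its numerical class is controlled by the $m$-th iterate of the pullback action $(f^m)^*$ on the relative Néron–Severi group of $\mathcal X/C$ plus a bounded contribution from fibral divisors.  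The key point is that the intersection of $p_m^*D_m$ with the fixed curve class $[\tilde\sigma]$ is bounded by a constant times $\bigl(\mathcal H' \cdot \text{(image of }(f^m)^*H\text{ under a fixed bounded-below functional)}\bigr)$, which in turn grows like $((f^m)^*H \cdot H^{\dim X - 1})$ up to a subexponential factor — precisely the quantity defining $\delta_f$.  Concretely, one shows $\deg\bigl((F^m\circ\sigma)^*\mathcal H\bigr) \le C \cdot ((f^m)^*H\cdot H^{\dim X-1}) + C'$ for constants $C, C'$ independent of $m$, and taking $m$-th roots and letting $m\to\infty$ gives $\overline{\alpha}_f(P) \le \delta_f$.

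**The main obstacle is making the comparison $\deg\bigl((F^m\circ\sigma)^*\mathcal H\bigr) \le C\cdot((f^m)^*H\cdot H^{\dim X-1}) + C'$ uniform in $m$.**  The difficulty is that the resolutions $\Gamma_m$ vary with $m$, and a priori the exceptional contributions and the fibral divisor corrections could accumulate.  The standard device (going back to Dinh–Sibony, and used by Kawaguchi–Silverman and Matsuzawa) is a \emph{submultiplicativity} or \emph{telescoping} estimate: rather than resolving $F^m$ directly, one writes $F^m = F \circ F^{m-1}$ and uses functoriality of pullback up to effective error, obtaining a recursion $\deg\bigl((F^{m}\circ\sigma)^*\mathcal H\bigr) \le A \cdot \deg\bigl((F^{m-1}\circ\sigma)^*\mathcal H\bigr)^{?}$ — but here, since we are on a curve, intersection numbers are linear in the divisor, so in fact the right framework is to track the class $(F^m\circ\sigma)_*[C']$ or equivalently the function $m \mapsto \deg\bigl((F^m\circ\sigma)^*\mathcal H\bigr)$ and compare it to $m \mapsto \deg\bigl((f^m)^*H\bigr)$ in the sense of numerical classes on $\mathcal X$.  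The cleanest route, which I expect the authors take and which keeps the proof ``simple and short'' as advertised, is: pick a single very ample $\mathcal H$ on $\mathcal X$, observe $(F^m\circ\sigma)^*\mathcal H$ is computed by an auxiliary morphism $C' \to \mathbb P^N_k$ as in Proposition~\ref{prop2.12}, and bound its degree directly using that the coordinate functions defining $F^m\circ\sigma$ have pole orders governed by the degrees of the polynomials defining $F^m$, whose growth rate is exactly $\delta_f$ by definition of the dynamical degree.  Pushing through this estimate carefully — ensuring the constant absorbing the difference between ``degree of defining polynomials of $F^m$'' and $((f^m)^*H\cdot H^{\dim X-1})$ is $m$-independent, via submultiplicativity of the latter sequence — is the technical heart, and everything else is bookkeeping with Lemma~\ref{lem2.5}, Proposition~\ref{prop2.12.3}, and Proposition~\ref{prop3.3}.
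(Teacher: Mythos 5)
Your first reduction is correct and matches the paper's: pass to a model $\pi\colon \mathcal X\to C$ with $F\colon \mathcal X\dashrightarrow\mathcal X$, take the section $\sigma$ corresponding to $P$, and rewrite $h_H(f^m(P))$ as $\tfrac{1}{[K(C'):k(t)]}\deg((F^m\circ\sigma)^*\mathcal H)+O(1)$ via Proposition \ref{prop2.12.3}. But your second and third paragraphs misidentify the mechanism for the key uniform-in-$m$ bound, and that is where the substance of the argument lies. You worry that exceptional and fibral contributions might ``accumulate'' across the varying resolutions $\Gamma_m$ and propose a telescoping $F^m=F\circ F^{m-1}$ with submultiplicativity (Dinh--Sibony style), or alternatively a hands-on estimate via pole orders of defining polynomials. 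Neither is what the paper does, and the accumulation worry is a red herring. The paper's bound is two independent one-shot estimates. First, for each fixed $m$ resolve $F^m$ \emph{once}: by the negativity lemma the discrepancy between $\sigma^*(F^m)^*\mathcal H$ and $(F^m\circ\sigma)^*\mathcal H$ is the degree, along the lifted section, of an effective exceptional divisor whose support the section misses (since $\sigma(C)\not\subset I_{F^m}$); hence $\deg((F^m\circ\sigma)^*\mathcal H)\le((F^m)^*\mathcal H\cdot\sigma_*C)$ (Lemma \ref{lem4.2}~(i)). The error goes the favorable direction for each $m$ separately, so you never compare $\Gamma_m$ to $\Gamma_{m+1}$ and nothing accumulates. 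Second, the uniform constant $M$ with $((F^m)^*\mathcal H\cdot\sigma_*C)\le M((F^m)^*\mathcal H\cdot\mathcal H^{\dim\mathcal X-1})$ comes from a compactness argument on the pseudoeffective cone $\overline{\Eff}(\mathcal X)$ (Lemma \ref{lem4.3}): the ratio $(E\cdot Z)/(E\cdot\mathcal H^{\dim\mathcal X-1})$ is scale-invariant and continuous on $\overline{\Eff}(\mathcal X)\setminus\{0\}$, hence bounded on a compact slice of the cone; apply this with $E=(F^m)^*\mathcal H$, which is pseudoeffective for every $m$. The constant $M$ is a property of the cone and not of the dynamics, so it is automatically independent of $m$ with no recursion needed --- this is precisely what makes the function-field proof ``simple and short,'' and it is the idea your proposal does not reach.

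You also glossed over a nontrivial identification. After the two estimates above you have bounded $\overline\alpha_f(P)$ by $\limsup_m((F^m)^*\mathcal H\cdot\mathcal H^{\dim\mathcal X-1})^{1/m}$, which is $\delta_{f_C}$, the dynamical degree of $F$ on the total space $\mathcal X$ over $k$, not $\delta_f$ computed on the geometric generic fiber $X$ over $\overline{k(t)}$. These live on varieties of different dimensions over different base fields, so equality is not automatic. The paper supplies Lemma \ref{lem4.3.1}, which deduces $\delta_{f_C}=\delta_f$ from Truong's product formula for relative dynamical degrees applied to $\pi\colon\mathcal X\to C$; some such step is required to close the argument, and your write-up silently conflates the two quantities.
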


\begin{rem}\label{rem4.0}
This inequality was stated  by Kawaguchi and Silverman (see \cite[Theorem 4]{KaSi2}),  
and a correct proof of it was given by Matsuzawa (see \cite{Mat}).
\end{rem}

We will give another proof of the inequality over $\overline{k(t)}$.

\begin{thm}\label{thm4.4}
Let $(X, f)$ be a dynamical system over $\overline{k(t)}$.  
Then the inequality 
$$\overline \alpha_f(P) \leq \delta_f$$ 
holds for any $P \in X_f$. 
\end{thm}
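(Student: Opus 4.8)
The plan is to exploit the geometric interpretation of height developed in Section~\ref{sec2}, namely that for a point $P \in X_f$ corresponding to a section $\sigma : C \to X_C$ of a model $\pi : X_C \to C$ (after a finite base change, which is harmless by Lemma~\ref{lem2.5}), the height $h_H(f^m(P))$ is, up to the normalizing factor $[K(C):k(t)]$, the intersection number $\sigma^* ((f_C^m)^* \mathcal H_C)$ on the curve $C$, where $f_C$ is a rational self-map of $X_C$ over $C$ lifting $f$ and $\mathcal H_C$ is a $\pi$-ample divisor. Since $\sigma$ is a fixed curve of some degree in $X_C$, this reduces the whole problem to bounding the growth of intersection numbers $\bigl( (f_C^m)^* \mathcal H_C \cdot \gamma \bigr)$ against a fixed $1$-cycle $\gamma = \sigma_* C$, i.e.\ to a statement about the spectral behaviour of pullback on divisor (or $N^1$) classes.

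The key steps, in order, are as follows. First I would fix a model $(X_C \xrightarrow{\pi} C, \mathcal H_C)$ of $(X,H)$ over a curve $C$ with $K(C) = K$ containing a field of definition of $f$; by resolving indeterminacy I may assume $f$ extends to a \emph{morphism} $f_C$ on a smooth projective model $X_C$ over $k$ (enlarging $\mathcal H_C$ to an honest ample divisor on $X_C$ if convenient, since $P\in X_f$ means $\sigma$ avoids the indeterminacy locus fibrewise and the relevant estimate is up to $O(1)$). Second, I would translate via Proposition~\ref{prop2.12.3} (and Proposition~\ref{prop2.12}) the quantity $h_H^+(f^m(P))$ into $\tfrac{1}{[K:k(t)]}\deg\bigl(\sigma^*(f_C^m)^*\mathcal H_C\bigr) + O(1)$, so that
\[
\overline\alpha_f(P) = \limsup_{m\to\infty} \Bigl( \mathcal H_C \cdot (f_C^m)_* (\sigma_*C) \Bigr)^{1/m},
\]
where the intersection is taken on $X_C$ and I use the projection formula to move $(f_C^m)^*$ onto the $1$-cycle. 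Third, I would invoke the standard fact (same as over $\mathbb C$) that the dynamical degree $\delta_{f_C}$ of the morphism $f_C$ on the projective variety $X_C$ governs the growth of $\bigl((f_C^m)^* A \cdot A^{\dim X_C - 1}\bigr)$ for ample $A$, and more generally that for any fixed curve class $\gamma$ one has $\bigl((f_C^m)^* A \cdot \gamma\bigr) \leq C(\gamma)\,(\delta_{f_C} + \varepsilon)^m$ for every $\varepsilon>0$ — this follows from the submultiplicativity of degrees of pullbacks together with the Hodge-index / numerical positivity estimates, bounding an arbitrary nef class against a power of an ample one. Fourth, I would check that $\delta_{f_C} = \delta_f$: the generic fibre of $\pi$ is $X$ and $f_C$ restricts to $f$ on it, and since dynamical degree is a birational invariant computed on the generic fibre (relative dynamical degree of the fibration is $0$ because the base $C$ is a curve and $f_C$ preserves the fibration over $\mathrm{id}_C$ or a finite map of $C$), one gets equality. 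Combining, $\overline\alpha_f(P)\le\delta_{f_C}=\delta_f$.

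The main obstacle is Step three: making rigorous the inequality $\bigl((f_C^m)^* A \cdot \gamma\bigr) \ll (\delta_{f_C}+\varepsilon)^m$ for a \emph{fixed but arbitrary} curve class $\gamma$, rather than for the self-intersection flag $A^{\dim X - 1}$. One clean route is to bound $\gamma$ numerically by $c\,A^{\dim X_C - 1}$ modulo classes pairing to zero with nef divisors — but $\gamma$ need not be nef or pseudoeffective, so one must instead argue via Siu-type inequalities or via the fact that the operator norm of $(f_C^m)^*$ on $N^1(X_C)_{\mathbb R}$ (with respect to a norm adapted to an ample cone) grows like $\delta_{f_C}^m$ up to subexponential factors, and then pair. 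This is precisely the content underlying Matsuzawa's argument in \cite{Mat}; here the point is that over a function field the whole estimate collapses to a single clean intersection-number inequality on the projective $k$-variety $X_C$, which is why the proof is shorter. I would therefore spend the bulk of the write-up on a self-contained statement and proof of this growth estimate for intersection with a fixed curve, and treat Steps one, two, and four as essentially bookkeeping justified by Section~\ref{sec2}.
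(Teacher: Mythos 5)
Your overall strategy matches the paper's: pass to a model over a curve, re-express heights as intersection numbers against a fixed $1$-cycle $\sigma_*C$, and reduce to comparing with the self-intersection flag $H_C^{\dim X}$ after showing $\delta_f = \delta_{f_C}$ via Truong's relative dynamical degrees. However there are two genuine gaps.

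First, the reduction ``I may assume $f$ extends to a morphism $f_C$'' is not available: resolving the indeterminacy of a dominant rational self-map yields a morphism $X_C' \to X_C$ from a blow-up, not a self-morphism, and in general (e.g.~Cremona maps) no birational model makes the self-map a morphism. As a consequence, the equality you write via the projection formula, $h_H^+(f^m(P)) = \tfrac{1}{[K:k(t)]}\deg\bigl(\sigma^*(f_C^m)^*\mathcal H_C\bigr) + O(1)$, is false: Proposition \ref{prop2.12.3} gives $\deg\bigl((f_C^m \circ \sigma)^*\mathcal H_C\bigr)$, not $\deg\bigl(\sigma^*(f_C^m)^*\mathcal H_C\bigr)$, and these differ by an amount that is not $O(1)$ uniformly in $m$ when $\sigma(C)$ meets $I_{f_C^m}$. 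The paper's essential tool here is Lemma \ref{lem4.2}(i), a one-sided inequality $\deg\bigl((f_C^m \circ \sigma)^*\mathcal H_C\bigr) \le \deg\bigl(\sigma^*(f_C^m)^*\mathcal H_C\bigr)$ proved via the negativity lemma on a resolution; it fortunately goes in the direction one needs for an upper bound on $\overline\alpha_f(P)$. Your proposal does not identify this step.

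Second, for the estimate $\bigl((f_C^m)^*H_C \cdot \sigma_*C\bigr) \ll \bigl((f_C^m)^*H_C \cdot H_C^{\dim X}\bigr)$ you correctly flag this as the main obstacle, but then propose Siu-type inequalities or operator-norm growth arguments, which is heavier than what is needed. The paper's Lemma \ref{lem4.3} gives a uniform constant $M$ with $(E \cdot Z) \le M\bigl(E \cdot H^{\dim-1}\bigr)$ for all $E \in \overline{\Eff}(X_C)$, by the elementary observation that the function $E \mapsto (E\cdot Z)/(E\cdot H^{\dim-1})$ is scale-invariant and continuous on the compact slice $\{E \in \overline{\Eff} : \|E\|=1\}$; one only needs to note that each $(f_C^m)^*H_C$ is pseudo-effective. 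You gestured at ``the proof is shorter over function fields'' but did not locate this mechanism, and the operator-norm route would reintroduce exactly the kind of $(\delta+\varepsilon)^m$ bookkeeping the geometric interpretation is designed to avoid.

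In short: right skeleton (model, intersection with the section, relative dynamical degree), but you are missing the negativity-lemma inequality that handles indeterminacy (your morphism reduction fails) and the compactness-of-the-pseudoeffective-cone lemma that makes the final intersection estimate a one-liner.
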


To prove Theorem \ref{thm4.4}, we prepare some lemmas.
To begin with,
we define a model of a dynamical system over $\overline{k(t)}$.

\begin{defn}\label{def3.1.1}
Let $(X, f)$ be a dynamical system over $\overline{k(t)}$. 
A \textit{model of the dynamical system $(X, f)$ 
over a curve $C$} is a pair $(X_C \overset{\pi}{\to} C, f)$ of 
a surjective morphism $\pi: X_C \to C$ from a smooth projective $k$-variety $X_C$ 
to $C$ 
and a dominant rational self-map $f_C: X_C \dashrightarrow X_C$ over $C$ 
such that $X_C \times_C \overline{k(t)} = X$ and 
the base change of $f_C$ along $\Spec \overline{k(t)} \to C$ is equal to $f$. 
\end{defn}

\begin{lem}\label{lem4.1}
Let $(X, f)$ be a dynamical system over $\overline{k(t)}$. 
Then there exists a model of $(X, f)$ over a curve $C$. 
\end{lem}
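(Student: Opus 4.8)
The plan is to spread out the data defining $(X,f)$ over a suitable affine curve and then take a smooth projective completion. First I would choose a finitely generated $k$-subalgebra $A \subset \overline{k(t)}$ over which everything is defined: since $X$ is a projective $\overline{k(t)}$-variety cut out by finitely many polynomials with finitely many coefficients, and the rational self-map $f$ is given (on charts) by finitely many rational functions again with finitely many coefficients, all of these coefficients lie in some such $A$. Enlarging $A$ if necessary, we may assume $A$ is a finitely generated $k$-algebra which is a domain with fraction field contained in $\overline{k(t)}$, and that $\Spec A$ is smooth of dimension $1$ over $k$ (shrink $\Spec A$ by inverting one more element to remove the singular locus, which is a proper closed subset). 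Set $U = \Spec A$; this is a smooth affine curve over $k$ whose function field $K(U)$ is a finite extension of $k(t)$ contained in $\overline{k(t)}$.

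Next I would descend $X$ and $f$ to $U$. Over $U$ we obtain a projective morphism $\pi_U : X_U \to U$ with $X_U \times_U \overline{k(t)} \cong X$, and a dominant rational self-map $f_U : X_U \dashrightarrow X_U$ over $U$ whose base change along $\Spec \overline{k(t)} \to U$ is $f$; this is the standard spreading-out argument (e.g.\ EGA IV, \S 8), using that $X$ is of finite presentation over $\overline{k(t)} = \varinjlim A'$ for $A'$ ranging over finitely generated subalgebras. Shrinking $U$ once more we may also assume that the generic fiber being smooth propagates to $X_U$ being smooth over $k$ (smoothness is an open condition on the source once the base is smooth over $k$, as $\dim U = 1$), and that $f_U$ remains dominant on the generic fiber. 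Then I would take a smooth projective completion: embed $U$ into a smooth projective curve $C$ over $k$ (the unique smooth projective model of $K(U)$), and take a projective compactification $X_C$ of $X_U$ together with a morphism $\pi : X_C \to C$ extending $\pi_U$. Finally, since any normal projective variety over a field of characteristic zero admits a resolution of singularities (Hironaka), I replace $X_C$ by such a resolution $\widetilde{X_C} \to X_C$; this does not change the generic fiber because $X$ is already smooth, and the rational self-map $f_C$ lifts to $\widetilde{X_C}$ as a rational self-map over $C$. The resulting pair $(\widetilde{X_C} \overset{\pi}{\to} C, f_C)$ is then a model of $(X,f)$ in the sense of Definition \ref{def3.1.1}.

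The main obstacle is bookkeeping rather than any deep input: one must make sure that the finitely many shrinkings of $\Spec A$ needed (to achieve: $A$ smooth of dimension one, $X_U$ smooth over $k$, $f_U$ dominant, $\pi_U$ projective) are all simultaneously achievable, which is fine since each removes only a proper closed subset and $U$ is irreducible of dimension one, so finitely many such removals still leave a dense open. One also needs that passing to the smooth projective compactification $C$ of $U$ and to a projective completion $X_C$ of $X_U$ does not disturb the generic fiber — this is automatic since $C$ and $U$ have the same function field, and the generic point of $C$ factors through $U$. The only genuinely nontrivial ingredient is resolution of singularities, which is available here because $k$ has characteristic zero; alternatively, for the statement as used later it would suffice to have $X_C$ normal and $\pi$ flat over a neighborhood of the relevant points, but invoking Hironaka gives the clean statement with $X_C$ smooth. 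With these observations in place the proof is complete.
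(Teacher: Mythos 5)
Your proposal is correct and follows the same route the paper has in mind: the paper's entire proof is the one-line remark ``Such a model is obtained by resolution of singularities,'' leaving the standard spreading-out argument implicit, and you have filled in exactly those details (choosing a finitely generated $k$-subalgebra of $\overline{k(t)}$, descending $X$ and $f$ to a smooth affine curve $U$, compactifying $U$ to $C$ and $X_U$ to a projective $X_C$, and then applying Hironaka, which fixes the generic fiber since resolution is an isomorphism over the smooth locus containing $X_U$). The only point worth flagging is that you should note, as you essentially do, that projective completion of $X_U$ should be taken fiberwise (e.g.\ by taking the closure of $X_U \hookrightarrow \mathbb{P}^N_U \subset \mathbb{P}^N_C$) so that $X_C \to C$ restricts to $X_U \to U$; and that dominance of the extended $f_C$ follows automatically because the generic fiber is dense in $X_C$, so a dominant map on the generic fiber extends to a dominant rational self-map of $X_C$ over $C$.
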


Such a model is obtained by resolution of singularities.

\begin{lem}[cf.~{\cite[Proposition 19]{KaSi2}}]\label{lem4.1.2}
Let $f: X \dashrightarrow Y$ and $g: Y \dashrightarrow Z$ be dominant rational maps of 
smooth projective varieties. 
Take a Cartier divisor $H$ on $Z$ and a curve $C$ on $X$. 
\begin{itemize}
\item[(i)] 
If $C \not\subset I_f$, $f(C) \not\subset I_g$ and $H$ is nef, then 
$$((g \circ f)^*H \cdot C) \leq (f^*g^*H \cdot C).$$ 
\item[(ii)]
If $C \cap I_f = \varnothing$ and $f(C) \cap I_g = \varnothing$, then 
$$((g \circ f)^*H \cdot C) = (f^*g^*H \cdot C).$$
\end{itemize}
\end{lem}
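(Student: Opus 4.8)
The plan is to prove Lemma~\ref{lem4.1.2} by pulling everything back to a resolution of the graph of $f$ (and of $g$) and then applying the projection formula together with standard nef/effective positivity. Concretely, first I would take a smooth projective variety $W$ with birational morphism $p\colon W\to X$ and a morphism $q\colon W\to Y$ such that $q = f\circ p$ as rational maps (a resolution of the indeterminacy of $f$). I may assume $p$ is an isomorphism over $X\setminus I_f$. Choose the strict transform $\widetilde C\subset W$ of $C$; this is well-defined because $C\not\subset I_f$, and $p_*[\widetilde C] = [C]$ as $1$-cycles (here I use $\deg(p|_{\widetilde C}) = 1$ since $p$ is birational onto $C$). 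Then by the projection formula, for any Cartier divisor $D$ on $W$ we have $(p^*(\,\cdot\,)\cdot \widetilde C)_W = (\,\cdot\,\cdot C)_X$, and more importantly, for divisors coming from $Y$ via $q$ I can compute $(f^*E\cdot C)_X$ and $((\,\cdot\,)\cdot C)_X$ in terms of intersection numbers on $W$.

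Second, I would set up the same resolution for $g$: take $V\to Y$ resolving $I_g$ with a morphism $V\to Z$. The composite rational map $g\circ f$ is resolved by a further blow-up $W'\to W$ dominating $V$ via the map $W\dashrightarrow Y\dashrightarrow V$; one gets morphisms $W'\to X$ and $W'\to Z$ computing $(g\circ f)^*H$. The key comparison is between $(g\circ f)^*H$ and $f^*(g^*H)$ on the level of $W'$: there is an effective divisor $F$ (supported on the exceptional locus of $W'\to W$ over the locus where $f(\widetilde C)$ meets $I_g$) such that $f^*g^*H$ pulled back to $W'$ equals $(g\circ f)^*H$ plus (a cycle-class represented by) $F$ when $H$ is nef — this is the usual statement that pulling back through an indeterminacy point only adds effective, exceptional contributions. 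For (i), since $H$ is nef its pullback to any resolution is nef, hence $F$ meets the curve $\widetilde C'$ (strict transform of $C$ in $W'$) non-negatively, giving $((g\circ f)^*H\cdot C)\le (f^*g^*H\cdot C)$. For (ii), the hypotheses $C\cap I_f=\varnothing$ and $f(C)\cap I_g=\varnothing$ force $\widetilde C'$ to be disjoint from the relevant exceptional divisors, so $F\cdot\widetilde C' = 0$ and the inequality becomes an equality; moreover in this case $f$ (resp.\ $g$) is actually a morphism in a neighborhood of $C$ (resp.\ $f(C)$), so the identity $(g\circ f)^*H = f^*g^*H$ holds scheme-theoretically there and no resolution subtlety remains.

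I would organize the write-up as: (1) reduce to the case where $f$ is a morphism near $C$ by replacing $X$ with a resolution $W$ of $\Gamma_f$ and $C$ with its strict transform, using the projection formula to see the intersection numbers are unchanged — here the hypothesis $C\not\subset I_f$ is exactly what lets the strict transform be a genuine curve mapping with degree one; (2) then $f$ is a morphism, so $(g\circ f)^*H$ makes sense directly and we compare it with $f^*(g^*H)$; since $g$ has indeterminacy, resolve $\Gamma_g$ to get $\mu\colon V\to Y$ and note $\mu^*g^*H = (\text{morphism }V\to Z)^*H + (\text{effective})$ by the negativity of exceptional divisors for a nef class — actually the cleanest statement is $g^*H$ (defined as pushforward of the pullback on $\Gamma_g$) satisfies, after pulling back by $f$ to the curve, $(f^*g^*H\cdot C) = ((V\to Z)^*H\cdot \widetilde{f(C)}) + (\text{nonneg. if }H\text{ nef})$, while $((g\circ f)^*H\cdot C)$ equals the first term; (3) conclude (i), and observe that under the disjointness hypotheses of (ii) the extra term vanishes and everything is an honest equality of divisor classes near $C$.

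The main obstacle I anticipate is bookkeeping the positivity of the ``correction'' cycle correctly: one must be careful that $f^*g^*H$ is defined via the graph of $g$ (pushforward of a pullback), that this pushforward differs from the naive pullback only by effective, $\mu$-exceptional contributions, and that these contributions pair non-negatively with the strict transform of $C$ precisely because $H$ is nef (this is where (i) genuinely needs nefness, and why only an inequality holds in general). Handling this via the identity $(g\circ f)^* H\cdot C \le f^*(g^*H)\cdot C$ reduces, after the first resolution, to the standard fact about pulling back nef classes through a birational morphism and the projection formula, so no genuinely new idea is needed — but the statement must be phrased so that the effective/exceptional divisor is manifestly supported away from $\widetilde C$ under the hypotheses of~(ii). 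I would cite \cite[Proposition 19]{KaSi2} for the analogous argument and keep the exposition short.
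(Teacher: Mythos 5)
Your approach matches the paper's: resolve the indeterminacy of $f$ and $g$ (and of the composite) via blow-ups, express the discrepancy between $(g\circ f)^*H$ and $f^*g^*H$ as intersection with an effective exceptional divisor, and use the projection formula to transfer everything to the strict transform of $C$. The reduction for (ii) to the statement that $f$ and $g$ are honest morphisms near $C$ and $f(C)$ is also exactly what the paper does.

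There is, however, a gap in your argument for (i). You write that ``since $H$ is nef its pullback to any resolution is nef, hence $F$ meets the curve $\widetilde C'$ non-negatively.'' That inference does not hold: nefness of $H$ does not make the correction divisor $F$ nef, and indeed $F$ is an exceptional divisor and typically not nef. What makes $F\cdot\widetilde C'\geq 0$ is a conjunction of two facts: (a) $F$ is \emph{effective} — this is where nefness (after reducing to the ample case, as the paper does) enters, via the negativity lemma (\cite[Lemma 3.39]{KoMo}) applied to $E=\nu^*\nu_*\tilde g^*H-\tilde g^*H$; and (b) $\widetilde C'$ is \emph{not contained in} $\Supp F$. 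Point (b) is precisely where the hypothesis $f(C)\not\subset I_g$ does its work: the image of the strict transform under the resolved map is not contained in the $\nu$-exceptional locus, hence $\widetilde C'\not\subset\Supp\tilde h^*E$, and an effective divisor intersects a curve not contained in its support non-negatively. Your proposal mentions $f(C)\not\subset I_g$ only in passing and effectively folds it into ``well-definedness,'' so as written the crucial positivity step for (i) is unjustified; you only invoke the support condition for (ii), where you actually need the stronger disjointness $f(C)\cap I_g=\varnothing$. To fix the argument you need to state (a) and (b) separately and observe that (b) is the content of the hypothesis $f(C)\not\subset I_g$.
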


\begin{proof}
Since a nef divisor is the limit of a sequence of ample divisors, 
we may assume that $H$ is ample. 
$$
\xymatrix{
X'' \ar[d]_{\mu'} \ar[dr]^{\tilde h} \\ 
X' \ar[d]_\mu \ar@{-->}[r]^h  \ar[dr]^{\tilde f} & Y' \ar[d]^\nu \ar[dr]^{\tilde g} \\
X \ar@{-->}[r]^f & Y \ar@{-->}[r]^g & Z
}
$$
In the above diagram, 
$\tilde f: X' \to Y$ (resp. $\tilde g: Y' \to Z$) is an elimination of indeterminacy of $f$ 
(resp.~$g$) by blowing up smooth centers in $I_f$ (resp.~$I_g$), 
$h = \nu^{-1} \circ f \circ \mu$, 
and $\tilde h: X'' \to Y'$ is an elimination of indeterminacy of $h$ 
by blowing up smooth centers in $I_h$. 
Then 
\begin{align*}
&(g \circ f)^*H \cdot C \leq f^*g^*H \cdot C\ \ \cdots \mathrm{(1)}\\
&\iff (\mu \circ \mu')_* (\tilde g \circ \tilde h)^*H \cdot C 
\leq \mu_* \tilde f^* \nu_* \tilde g^*H \cdot C \\
&\iff \mu_* \mu'_* \tilde h^* \tilde g^* H \cdot C 
\leq \mu_* \tilde f^* \nu_* \tilde g^*H \cdot C.
\end{align*}
Here 
$\mu_* \tilde f^* \nu_* \tilde g^*H  
=\mu_* \mu'_* \mu'^*\tilde f^* \nu_* \tilde g^*H 
=\mu_* \mu'_* \tilde h^* \nu^* \nu_* \tilde g^*H$. 
Set 
$$E = \nu^* \nu_* \tilde g^* H- \tilde g^*H,$$ 
then 
(1) is equivalent to the inequality 
$$\mu_* \mu'_* \tilde h^* E \cdot C \geq 0.$$ 
By negativity lemma (cf.~\cite[Lemma 3.39]{KoMo}), 
$E$ is an effective and $\nu$-exceptional divisor. 
Take a curve $C'$ on $X'$ such that $\mu(C')=C$ and a curve $C''$ on $X''$ 
such that $\mu'(C'')=C'$. 

$\nu(\tilde h(C''))= \tilde f(\mu'(C''))= \tilde f(C')= f(C) \not\subset I_g$, 
so $\tilde h(C'') \not\subset \Exc(\nu)$. 
In particular, $\tilde h(C'') \not\subset \Supp E$ and then 
$C'' \not\subset \Supp \tilde h^*E$. 
Hence 
$$C=\mu(\mu'(C'')) \not\subset \mu(\mu'(\Supp \tilde h^*E)) 
= \Supp \mu_* \mu'_* \tilde h^*E.$$ 
This implies (1). 

(ii) is obvious since $f|_C: C \to f(C)$ and $g|_{f(C)}: f(C) \to Y$ are morphisms. 
\end{proof}

The following lemma is a variant of Lemma \ref{lem4.1.2}.

\begin{lem}\label{lem4.2}
Let $f: X \dashrightarrow Y$ be a dominant rational map of smooth projective varieties 
and $g: C \to X$ a morphism from a curve $C$. 
Take a Cartier divisor $H$ on $Y$. 
\begin{itemize}
\item[(i)]
If $g(C) \not\subset I_f$ and $H$ is nef, then 
$$\deg((f \circ g)^*H) \leq \deg(g^* f^*H).$$
\item[(ii)]
If $g(C) \cap I_f = \varnothing$, then 
$$\deg((f \circ g)^*H) = \deg(g^* f^*H).$$
\end{itemize}
\end{lem}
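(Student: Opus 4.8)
The plan is to deduce Lemma~\ref{lem4.2} from Lemma~\ref{lem4.1.2}, of which it is essentially the special case in which the source variety is the curve $C$ itself. Concretely, in Lemma~\ref{lem4.1.2} take the varieties $(X,Y,Z)$ to be $(C,X,Y)$, the first rational map to be the morphism $g\colon C\to X$, the second to be $f\colon X\dashrightarrow Y$, and the ``curve on the source'' to be all of $C$; for that curve the intersection number $(D\cdot C)$ of a divisor $D$ with its fundamental cycle is simply $\deg D$. Since $g$ is a morphism, $I_g=\varnothing$, so the hypotheses ``$C\not\subset I_g$'' and ``$C\cap I_g=\varnothing$'' of Lemma~\ref{lem4.1.2} hold automatically, while the remaining hypothesis there becomes precisely $g(C)\not\subset I_f$ (resp.\ $g(C)\cap I_f=\varnothing$) here; with these substitutions parts (i) and (ii) of Lemma~\ref{lem4.1.2} translate verbatim into parts (i) and (ii) here. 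One caveat: Lemma~\ref{lem4.1.2} is phrased for \emph{dominant} rational maps and $g$ need not be dominant, but the proof uses dominance of the first map only to make its composite with the second meaningful, which here is automatic because $g$ is a morphism.

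If one prefers a self-contained argument, the proof of Lemma~\ref{lem4.1.2} collapses because $g$ is a morphism, and I would run it as follows. Let $\mu\colon X'\to X$, together with $\tilde f\colon X'\to Y$, be an elimination of the indeterminacy of $f$ obtained by blowing up smooth centres over $I_f$, so that $\mu$ is an isomorphism over $X\setminus I_f$ and $f^*H$ is represented by $\mu_*\tilde f^*H$. Since $g(C)\not\subset I_f$ we have $g^{-1}(I_f)\neq C$, so the ideal sheaf of $g^{-1}(I_f)$ is invertible on the smooth curve $C$, and by the universal property of blowing up $g$ lifts to a morphism $\tilde g\colon C\to X'$ with $\mu\circ\tilde g=g$ and $\tilde f\circ\tilde g=f\circ g$. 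Writing $\mu^*\mu_*\tilde f^*H=\tilde f^*H+E$, the negativity lemma (\cite[Lemma~3.39]{KoMo}) gives that $E$ is effective and $\mu$-exceptional when $H$ is nef, and
\[
g^*f^*H=\tilde g^*\mu^*\bigl(\mu_*\tilde f^*H\bigr)=\tilde g^*\tilde f^*H+\tilde g^*E=(f\circ g)^*H+\tilde g^*E.
\]
Because $\Supp E\subset\Exc\mu$ lies over $I_f$ whereas $\mu(\tilde g(C))=g(C)\not\subset I_f$, we get $\tilde g(C)\not\subset\Supp E$, so $\tilde g^*E\geq0$ and hence $\deg(g^*f^*H)-\deg((f\circ g)^*H)=\deg(\tilde g^*E)\geq0$, which is (i). For (ii), if $g(C)\cap I_f=\varnothing$ then $g$ factors through the open set $U=X\setminus I_f$ on which $f$ restricts to a morphism, so $(f\circ g)^*H=g^*f^*H$ exactly (no nefness needed); equivalently, $\tilde g(C)\cap\Exc\mu=\varnothing$ forces $\tilde g^*E=0$.

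I do not expect any real obstacle: the argument is a routine specialization. The one point worth singling out is the assertion that $\Supp E\subset\Exc\mu$ maps into $I_f$ — equivalently, that $g(C)\not\subset I_f$ already pushes $\tilde g(C)$ off the exceptional locus — which is exactly the input used at the analogous step of the proof of Lemma~\ref{lem4.1.2} (there phrased as ``$\nu(\tilde h(C''))=f(C)\not\subset I_g$, so $\tilde h(C'')\not\subset\Exc\nu$''); it holds because the resolution of $f$ is obtained by blowing up centres over $I_f$, so $\mu$ is an isomorphism over $X\setminus I_f$. Granting that, the inequality in (i) is just the effectivity of $\tilde g^*E$, and the equality in (ii) is its vanishing.
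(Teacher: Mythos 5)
Your self-contained argument is essentially identical to the paper's proof: resolve the indeterminacy of $f$ by $\mu\colon X'\to X$, lift $g$ to $\tilde g\colon C\to X'$ using $g(C)\not\subset I_f$, write $\deg(g^*f^*H)-\deg((f\circ g)^*H)=\deg(\tilde g^*E)$ with $E=\mu^*\mu_*\tilde f^*H-\tilde f^*H$, apply the negativity lemma to get $E$ effective and $\mu$-exceptional, and conclude from $\tilde g(C)\not\subset\Supp E$; part (ii) is immediate since $f$ is a morphism near $g(C)$. The only cosmetic difference is that the paper first reduces nef to ample by a limit argument, whereas you observe directly that $-E$ is $\mu$-nef when $H$ is nef, which works just as well.
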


\begin{proof}
(i) Since a nef divisor is the limit of a sequence of ample divisors, 
we may assume that $H$ is ample. 
$$
\xymatrix{
 & X' \ar[d]^\mu \ar[dr]^{\tilde f} \\
C \ar[ur]^{\tilde g} \ar[r]^g & X \ar@{-->}[r]^f & Y
}
$$
In the above diagram, 
$\tilde f: X' \to Y$ is an elimination of indeterminacy of $f$ 
by blowing up smooth centers in $I_f$, and
we can define the composition $\tilde g = \mu \circ g$ 
by the assumption that $g(C) \not\subset I_f$. 
Moreover it is a morphism. 

We compute 
\begin{align*}
\deg(g^*f^*H) - \deg((f \circ g)^*H)
&= \deg(\tilde g^* \mu^* \mu_* \tilde f^* H - \tilde g^* \tilde f^*H) \\
&= \deg(\tilde g^*E), 
\end{align*}
where we set $E= \mu^*\mu_* \tilde f^*H - \tilde f^*H$. 
By negativity lemma(cf.~\cite[Lemma 3.39]{KoMo}), 
$E$ is an effective and $\mu$-exceptional divisor on $X'$. 
Moreover $\tilde g(C) \not\subset \Supp E$ 
since $\mu(\tilde g(C)) = g(C) \not\subset I_f$ and 
$\mu(E) \subset I_f$. 
So $\deg(\tilde g^*E) \geq 0$. 

(ii) is obvious since both $g: C \to g(C)$ and $f|_{g(C)}: g(C) \to Y$ are morphisms. 
\end{proof}

\begin{lem}\label{lem4.3}
Let $X$ be a smooth projective variety  
with an ample Cartier divisor $H$.  
$\overline{\Eff}(X) \subset N^1(X)_{\mathbb R}$ denotes the pseudo-effective cone of $X$. 
Take a 1-cycle $Z \in N_1(X)_{\mathbb R}$. Then there is a constant $M>0$ such that 
$$(E \cdot Z) \leq M(E \cdot H^{\dim X-1})$$
holds for any $E \in \overline{\Eff}(X)$. 
\end{lem}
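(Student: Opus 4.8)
The plan is to reduce everything to a compactness argument inside the finite-dimensional real vector space $N^1(X)_{\mathbb R}$. The key point I would isolate first is that the linear functional $\ell\colon N^1(X)_{\mathbb R}\to\mathbb R$ given by $\ell(E)=(E\cdot H^{\dim X-1})$ is nonnegative on $\overline{\Eff}(X)$ and \emph{strictly} positive on $\overline{\Eff}(X)\setminus\{0\}$. Granting this, the lemma follows quickly. I would consider the slice $B=\{E\in\overline{\Eff}(X)\mid \ell(E)=1\}$. It is closed, and it is bounded: if $E_n\in B$ with $\lVert E_n\rVert\to\infty$, then along a subsequence $E_n/\lVert E_n\rVert$ converges to some $E_\infty\in\overline{\Eff}(X)$ with $\lVert E_\infty\rVert=1$ (here one uses that $\overline{\Eff}(X)$ is a closed cone), and then $\ell(E_\infty)=\lim 1/\lVert E_n\rVert=0$, contradicting strict positivity. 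Hence $B$ is compact, and the linear function $E\mapsto(E\cdot Z)$ attains a maximum $M_0$ on $B$; put $M=\max\{M_0,1\}>0$. For any $E\in\overline{\Eff}(X)$ the desired inequality is trivial if $E=0$, and otherwise $\ell(E)>0$, so $E/\ell(E)\in B$ and therefore $(E\cdot Z)/\ell(E)\le M_0\le M$, i.e.\ $(E\cdot Z)\le M\,(E\cdot H^{\dim X-1})$.

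So the whole content lies in the strict positivity of $\ell$ on $\overline{\Eff}(X)\setminus\{0\}$. Nonnegativity is clear: a prime divisor $D$ has $(D\cdot H^{\dim X-1})>0$ because $H$ is ample, and this extends by linearity and continuity to $\overline{\Eff}(X)$, which is the closure of the cone generated by classes of effective divisors. For strictness, suppose $E\in\overline{\Eff}(X)$ is nonzero with $\ell(E)=0$; I would derive a contradiction by cutting down to a surface. If $\dim X\ge 3$, choose general members $H_1,\dots,H_{\dim X-2}\in|mH|$ with $m\gg 0$, so that $S=H_1\cap\cdots\cap H_{\dim X-2}$ is a smooth projective surface; the restriction $N^1(X)_{\mathbb R}\to N^1(S)_{\mathbb R}$ is injective by the Grothendieck--Lefschetz hyperplane theorem applied repeatedly, the class $E|_S$ is still pseudoeffective (restricting limits of effective $\mathbb Q$-divisors to a general complete intersection yields effective divisors, and one passes to the limit), and $(E|_S\cdot H|_S)_S=m^{\dim X-2}\,\ell(E)=0$ with $H|_S$ ample. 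On the surface $S$, a nonzero pseudoeffective class cannot be orthogonal to an ample class: writing $E|_S=P+N$ for its Zariski decomposition and applying the Hodge index theorem forces $E|_S=0$, hence $E=0$, a contradiction. (The cases $\dim X\le 2$ are immediate, and alternatively one may invoke directly that $H^{\dim X-1}$ lies in the interior of the dual cone of $\overline{\Eff}(X)$; all of this is valid over the algebraically closed field $k$ of characteristic zero.)

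The main obstacle is precisely this strict positivity of $E\mapsto(E\cdot H^{\dim X-1})$ on $\overline{\Eff}(X)\setminus\{0\}$, equivalently the compactness of the normalized slice $B$; once that is in hand the remainder is routine finite-dimensional convex geometry. Everything else used above --- closedness of $\overline{\Eff}(X)$, finite-dimensionality of $N^1(X)_{\mathbb R}$, and continuity of the intersection pairing --- is standard, so I would keep those parts brief.
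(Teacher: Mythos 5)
Your argument and the paper's are structurally the same: both reduce the lemma to (a) the strict positivity of the linear functional $\ell(E)=(E\cdot H^{\dim X-1})$ on $\overline{\Eff}(X)\setminus\{0\}$ and (b) compactness of a cross-section of the cone, followed by the homogeneity/scaling step. The only cosmetic difference in (b) is the choice of cross-section: you use the hyperplane slice $\{\ell=1\}$ (whose boundedness you then have to argue, correctly, via a normalized subsequence), while the paper uses the norm-one slice, which is bounded for free. The genuine difference is in (a): the paper simply cites \cite[Lemma 20]{KaSi2} for the strict positivity, whereas you supply a proof, restricting to a general complete-intersection surface via Lefschetz and invoking Zariski decomposition and the Hodge index theorem. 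That argument is correct; the one small point worth flagging is that ``$E|_S$ is still pseudoeffective for $S$ very general'' involves avoiding a countable family of conditions, so over a countable base field one should first extend scalars to an uncountable algebraically closed field (intersection numbers are unaffected) or invoke the dual-cone characterization ($H^{\dim X-1}$ lies in the interior of the movable cone) that you mention as an alternative. Net effect: your write-up is more self-contained than the paper's on the one point the paper outsources, and otherwise coincides with it.
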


\begin{proof}
Note that $(E \cdot H^{\dim X-1})>0$ for any $E \in \overline{\Eff}(X) \setminus \{0\}$ 
(see \cite[Lemma 20]{KaSi2}). 
We define a function $f: \overline{\Eff}(X) \setminus \{0\} \to \mathbb R$ as 
$$f(E)=\frac{(E \cdot Z)}{(E \cdot H^{\dim X-1})}.$$
Take a norm $||\cdot||$ on $N^1(X)_{\mathbb R}$ 
and set $S=\{E \in  \overline{\Eff}(X) |\  ||E||=1 \}$. 
Then we can take an upper bound $M>0$ of $f|_S$ since $S$ is compact. 
But $f$ satisfies $f(cE)=f(E)$ for $E \in N^1(X)_{\mathbb R}$ and $c>0$, so $M$ is in fact 
an upper bound of $f$. This implies the claim. 
\end{proof}

\begin{lem}\label{lem4.3.1}
Let $(X, f)$ be a dynamical system over $\overline{k(t)}$ with a 
model $(X_C \overset{\pi}{\to} C, f_C)$ over a curve $C$ or $k$. 
Then $\delta_f=\delta_{f_C}$. 
\end{lem}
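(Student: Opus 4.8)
The plan is to compute both $\delta_f$ and $\delta_{f_C}$ as limits of intersection numbers and to compare them through the fibration structure $\pi\colon X_C\to C$. Write $N=\dim X$, so $\dim X_C=N+1$ when the model is over a curve $C$. First I would dispose of the case of a model over $k$: there $X_C=X_0\times_k C$ and $f_C=f_0\times\mathrm{id}_C$ for a dynamical system $(X_0,f_0)$ over $k$, so a Künneth computation together with $\delta_{\mathrm{id}_C}=1$ gives $\delta_{f_C}=\delta_{f_0}$; and since $f$ is the base change of $f_0$ and intersection numbers are insensitive to extension of the (algebraically closed) ground field, $\delta_{f_0}=\delta_f$. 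From now on I assume the model is over a curve $C$, and, again by invariance of intersection numbers under field extension, I replace $\delta_f$ by $\delta_{f_\eta}$, where $X_\eta$ is the generic fibre of $\pi$ over $k(C)$ and $f_\eta\colon X_\eta\dashrightarrow X_\eta$ the induced map. Fix a very ample $H_C$ on $X_C$ and set $H=H_C|_{X_\eta}$, $H_c=H_C|_{X_c}$ for closed points $c\in C$; all dynamical degrees below are computed with these.

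The first input is a fibrewise comparison of intersection numbers. For each fixed $m$, spreading out shows that over a dense open subset of $C$ the formation of the class $(f_C^m)^*H_C$ and of $f_C^m$ itself commutes with restriction to fibres; hence for a general closed point $c$,
$$\big((f_C^m)^*H_C\cdot H_C^{N-1}\cdot [X_c]\big)_{X_C}=\big((f_c^m)^*H_c\cdot H_c^{N-1}\big)_{X_c}=\big((f_\eta^m)^*H\cdot H^{N-1}\big)_{X_\eta},$$
the last equality because $(X_c,f_c)$ is a specialization of $(X_\eta,f_\eta)$ and intersection numbers are constant in flat families. This already yields one inequality: $Z=H_C^{N-1}\cdot[X_c]$ is an effective curve class on $X_C$, so Lemma \ref{lem4.3}, applied to the pseudoeffective divisor $(f_C^m)^*H_C$, produces a constant $M>0$ independent of $m$ with
$$\big((f_\eta^m)^*H\cdot H^{N-1}\big)_{X_\eta}=\big((f_C^m)^*H_C\cdot Z\big)_{X_C}\le M\big((f_C^m)^*H_C\cdot H_C^{N}\big)_{X_C}.$$
Taking $m$-th roots and letting $m\to\infty$ gives $\delta_{f_\eta}\le\delta_{f_C}$.

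The reverse inequality $\delta_{f_C}\le\delta_{f_\eta}$ is the crux. Here one uses that $f_C$ is a self-map \emph{over} $C$: for every $m$ and every divisor $D$ on $C$ one has $(f_C^m)^*\pi^*D=\pi^*D$, so the subspace $V\subseteq N^1(X_C)_{\mathbb R}$ spanned by vertical divisor classes (components of fibres) is preserved by the operation $(f_C^m)^*$, and the quotient $N^1(X_C)_{\mathbb R}/V$ embeds into $N^1(X_\eta)_{\mathbb R}$ compatibly with $(f_C^m)^*$ and $(f_\eta^m)^*$. Thus, modulo the non-functoriality of pullback (handled exactly as in the proofs of Lemma \ref{lem4.1.2} and Lemma \ref{lem4.3}, by passing to models at each step and invoking the negativity lemma), the growth of $\big((f_C^m)^*H_C\cdot H_C^{N}\big)$ is governed by the growth on the quotient---equal to $\delta_{f_\eta}^m$ up to polynomial factors, by the fibrewise comparison above---together with the growth of the induced action on $V$, and the latter is bounded by $\delta_{f_\eta}$ since it is a ``relative dynamical degree over the closed points of $C$'', which is semicontinuous and dominated by the generic one. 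Equivalently, this is the product formula for relative dynamical degrees of Dinh--Nguyen and Truong applied to $\pi\colon X_C\to C$ with base map $\mathrm{id}_C$: $\delta_{f_C}=\max\{\delta_{\mathrm{id}_C},\delta_{f_\eta}\}=\delta_{f_\eta}$ because $\delta_{f_\eta}\ge 1$. Combining the two inequalities, $\delta_{f_C}=\delta_{f_\eta}=\delta_f$.

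The main obstacle is making this reverse inequality rigorous within the elementary intersection-theoretic toolkit of the paper; if one is willing to cite the theory of relative dynamical degrees it collapses to a one-line reference, and I expect that is the route the authors take, perhaps after first recording the $\geq$ direction as above.
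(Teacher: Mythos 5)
Your proposal lands on essentially the same strategy as the paper: identify $\delta_f$ with the first relative dynamical degree $\lambda_1(f_C\mid\pi)$ of the model, and then invoke Truong's product formula $\delta_{f_C}=\max\{\lambda_1(f_C\mid\pi),\,1\}=\lambda_1(f_C\mid\pi)$. The paper establishes the identification by pulling $(f_C^m)^*H_C$ back to the geometric generic fiber through a resolution of indeterminacy (showing $(f^m)^*H=\eta^*(f_C^m)^*H_C$) and comparing Euler characteristics with a general closed fiber, rather than by spreading out, but the substance is the same; and once the product formula is in hand, your separate $\delta_{f_\eta}\le\delta_{f_C}$ step via Lemma \ref{lem4.3} and the sketched ``elementary'' reverse inequality are both superfluous.
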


\begin{proof}
We define the $k$-th dynamical degree and the $k$-th relative dynamical degree: 
$$\lambda_k(f_C)= \lim_{m \to \infty} ((f_C^m)^* H_C^k \cdot H_C^{n+1-k})^{1/m},$$
$$\lambda_k(f_C|\pi)=\lim_{m \to \infty} ((f_C^m)^*H_C^k \cdot H_C^{n-k} \cdot F)^{1/m}.$$
Note that $\lambda_1(f_C)=\delta_{f_C}$.

Set $n= \dim X$.
Take an ample divisor $H_C$ on $X_C$ and a general fiber $F$ of $\pi$.
Fix an integer $m>0$.
Take an elimination of indeterminacy of $f_C^m$: 
\[
\xymatrix{
&\Gamma_C \ar[ld]_{p_C} \ar[rd]^{g_C}&\\
X_C \ar@{-->}[rr]_{f_C^m}&&X_C
}
\] 
Pulling it back along $\overline{k(t)} \to C$,  we get the following diagram:
\[
\xymatrix{
X \ar[r]^{\eta}&X_C\\
\Gamma \ar[u]^p \ar[d]_g \ar[r]^{\eta_{\Gamma}}&\Gamma_C \ar[u]_{p_C} \ar[d]^{g_C}\\
X \ar[r]_{\eta}&X_C
}
\]
Set $H=\eta^*H_C$.
We can show that $g^*\eta^*=\eta_\Gamma^* g_C^*$ and 
$p_*\eta_\Gamma^*=\eta^*p_{C*}$.
So we have 
$$(f^m)^*H = p_*g^*\eta^*H_C = p_*\eta_\Gamma^* g_C^*H_C 
= \eta^* p_{C*} g_C^* H_C = \eta^*(f_C^m)^*H_C.$$
So $((f^m)^*H \cdot H^{n-1})= (\eta^*(f_C^m)^*H_C \cdot (\eta^*H_C)^{n-1})$.
Hence $((f^m)^*H \cdot H^{n-1})$ is equal to 
the coefficient of the monomial $t_1 \cdots t_n$ for the numerical polynomial 
$$\chi(X, t_1 \eta^*(f_C^m)^*H_C + t_2 \eta^*H_C + \cdots + t_n \eta^*H_C).$$
For any Cartier divisor $D$ on $X_C$ and a general fiber $F$ of $\pi$, 
the equality $\chi(X, \eta^*D)=\chi(F, D|_F)$ holds.
So
$$\chi(X, t_1 \eta^*(f_C^m)^*H_C + t_2 \eta^*H_C + \cdots + t_n \eta^*H_C)$$
$$= \chi(F, t_1 (f_C^m)^*H_C|_F + t_2 H_C|_F + \cdots + t_n H_C|_F).$$
Hence we have 
$(\eta^*(f_C^m)^*H_C \cdot (\eta^*H_C)^{n-1}) 
= ((f_C^m)^*H_C|_F \cdot (H_C|_F)^{n-1}) 
= ((f_C^m)^*H_C \cdot H_C^{n-1} \cdot F)$, and so 
$\lambda_1(f)= \lambda_1(f_C|\pi)$.

On the other hand, by \cite[Theorem 1.4]{Tru}, 
\begin{align*}
\lambda_1(f_C)
&=\max \{ \lambda_1 (f_C|\pi)\lambda_0(\id_{C}), 
\lambda_0 (f_C|\pi) \lambda_1(\id_{C}) \} \\
&= \max \{ \lambda_1(f_C|\pi) , 1 \} \\
&=\lambda_{1}(f_C|\pi).
\end{align*}
Note that $\lambda_q(\mathrm{id}_C)=1$ for all $q$ and $\lambda_0(f_C|\pi)=1$ by definition.
So $$\delta_f =\lambda_1(f)= \lambda_1(f_C|\pi)=\lambda_1(f_C)=\delta_{f_C}.$$
\end{proof}

\begin{proof}[Proof of Theorem \ref{thm4.4}]
Take $P \in X_f$. 
Put $n = \dim X$. 
By Lemma \ref{lem4.1}, 
we can take a model $(X_C \overset{\pi}{\to} C, f_C)$ over a curve $C$. 
We may assume that 
$P$ corresponds to a section $\sigma: C \to X_C$ of $\pi$. 

Take an ample Cartier divisor $H_C$ on $X_C$ and set $H=(X \to X_C)^*H_C$.
By Lemma \ref{lem4.3.1}, 
$$\delta_f= \delta_{f_C} = \lim_{m \to \infty} ((f_C^m)^*H_C \cdot H_C^n)^{1/m}.$$ 
On the other hand, by Proposition \ref{prop2.12.3}, 
\begin{align*}
\overline \alpha_f(P) 
&= \limsup_{m \to \infty} \tilde h^+_H(f^m(P))^{1/m} \\
&= \limsup_{m \to \infty}  \deg^+((f_C^m \circ \sigma)^*H_C)^{1/m}.  
\end{align*}
Note that $\Img(\sigma) \not\subset I_{f_C^m}$ since $P \not\in I_{f^m}$. 
By Lemma \ref{lem4.2} (i), 
$$\deg((f_C^m \circ \sigma)^*H_C) 
\leq \deg(\sigma^*(f_C^m)^*H_C) = ((f_C^m)^*H_C \cdot \sigma_*C).$$
It is obvious that $(f_C^m)^*H_C \in \overline{\Eff}(X_C)$ for every $m$. 
So, by Lemma \ref{lem4.3}, there is a constant $M>0$ such that the inequality 
$$((f_C^m)^*H_C \cdot \sigma_*C) \leq M((f_C^m)^*H_C \cdot H_C^n)$$
holds for every $m$. 
Therefore we have 
\begin{align*}
\overline \alpha_f(P) 
&\leq \limsup_{m \to \infty}  ((f_C^m)^*H_C \cdot \sigma_*C)^{1/m} \\
&\leq \limsup_{m \to \infty} (M((f_C^m)^*H_C \cdot H_C^n))^{1/m} \\
&= \limsup_{m \to \infty} ((f_C^m)^*H_C \cdot H_C^n)^{1/m} \\
&= \delta_{f_C} \\
&= \delta_f. 
\end{align*}
\end{proof}


\section{A sufficient condition}\label{sec_suff}

Let $(X,f)$ be a dynamical system over $\overline{k(t)}$.
In this section, we give a sufficient condition of a rational point $P \in X_f$
whose arithmetic degree attains the dynamical degree.

\begin{thm}\label{thm_suff}
Let $(X,f)$ be a dynamical system over $\overline{k(t)}$ and
$(X_C \overset{\pi}{\to} C, f_C)$ a model over a curve $C$.
Take a rational point $P \in X_f$ corresponding to a section $\sigma:C \to X_C$ of $\pi$.
Assume that 
\begin{itemize}
\item
$\sigma(C) \cap I_{f_C^m} = \varnothing$ for every $m \geq 1$ and 
\item
$(E \cdot \sigma(C)) >0$ for any $E \in \overline{\Eff}(X) \setminus \{ 0\}$.
\end{itemize}
Then $\alpha_f(P)$ exists and $\alpha_f(P)=\delta_f$.
\end{thm}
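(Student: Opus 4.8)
The plan is to use the geometric interpretation of heights from Section~\ref{sec2} and the refined version of Lemma~\ref{lem4.2} available under the stronger hypothesis. Concretely, pick an ample Cartier divisor $H_C$ on $X_C$ and set $H = (X \to X_C)^*H_C$, so that by Proposition~\ref{prop2.12.3} we may compute $\alpha_f(P)$ using $\tilde h_H$, i.e.\ $h_H^+(f^m(P)) = \deg^+((f_C^m \circ \sigma)^*H_C) + O(1)$. The first hypothesis $\sigma(C) \cap I_{f_C^m} = \varnothing$ for every $m$ lets us invoke Lemma~\ref{lem4.2}(ii) rather than just (i): we get the \emph{equality}
\[
\deg((f_C^m \circ \sigma)^*H_C) = \deg(\sigma^*(f_C^m)^*H_C) = ((f_C^m)^*H_C \cdot \sigma_*C)
\]
for every $m \geq 1$. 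So the problem reduces to showing that the sequence $((f_C^m)^*H_C \cdot \sigma_*C)^{1/m}$ converges to $\delta_{f_C} = \delta_f$ (the last equality by Lemma~\ref{lem4.3.1}).

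The upper bound $\limsup_m ((f_C^m)^*H_C \cdot \sigma_*C)^{1/m} \leq \delta_{f_C}$ is exactly the argument already carried out in the proof of Theorem~\ref{thm4.4}: since $(f_C^m)^*H_C \in \overline{\Eff}(X_C)$, Lemma~\ref{lem4.3} applied to the $1$-cycle $\sigma_*C$ gives a constant $M>0$ with $((f_C^m)^*H_C \cdot \sigma_*C) \leq M((f_C^m)^*H_C \cdot H_C^n)$, and the right side has $m$-th root tending to $\delta_{f_C}$. For the matching lower bound I would use the second hypothesis: since $\sigma(C)$ has strictly positive intersection with every nonzero pseudo-effective class, the linear functional $E \mapsto (E \cdot \sigma_*C)$ is strictly positive on $\overline{\Eff}(X_C) \setminus \{0\}$, hence (by the same compactness argument as in Lemma~\ref{lem4.3}, now with the roles reversed) there is a constant $m_0 > 0$ with $(E \cdot H_C^n) \leq m_0 (E \cdot \sigma_*C)$ for all $E \in \overline{\Eff}(X_C)$. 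Applying this to $E = (f_C^m)^*H_C$ gives
\[
((f_C^m)^*H_C \cdot H_C^n) \leq m_0 \big((f_C^m)^*H_C \cdot \sigma_*C\big),
\]
so $\liminf_m ((f_C^m)^*H_C \cdot \sigma_*C)^{1/m} \geq \liminf_m ((f_C^m)^*H_C \cdot H_C^n)^{1/m} = \delta_{f_C}$. Combining, the limit exists and equals $\delta_{f_C} = \delta_f$, and then $\alpha_f(P) = \delta_f$.

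The one point requiring a little care is the compactness/positivity argument for the lower bound: I want to know that $(E \cdot \sigma_*C) > 0$ on all of $\overline{\Eff}(X_C) \setminus \{0\}$, but the hypothesis is phrased for $\overline{\Eff}(X) \setminus \{0\}$ where $X = X_C \times_C \overline{k(t)}$ is the generic fiber. The main obstacle is thus to check that pseudo-effectivity and the relevant intersection numbers are compatible under the base change $X \to X_C$ — that the class $(f_C^m)^*H_C$, restricted to $X$, stays pseudo-effective and that its intersection with $\sigma_*C$ can be computed on the fiber. This is essentially the same compatibility already exploited in Lemma~\ref{lem4.3.1} (the identity $(f^m)^*H = \eta^*(f_C^m)^*H_C$ and the fiberwise computation of intersection numbers via Euler characteristics), so I expect it to go through; one just has to phrase the positivity hypothesis in terms of classes on $X_C$ pulled back from $X$, or equivalently restrict to the subcone of $\overline{\Eff}(X_C)$ generated by such classes, and note $((f_C^m)^*H_C \cdot H_C^n) = ((f_C^m)^*H_C|_F \cdot (H_C|_F)^n)$ and similarly $((f_C^m)^*H_C \cdot \sigma_*C)$ depend only on the restriction to the geometric generic fiber. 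Once that bookkeeping is done, both inequalities are immediate from Lemma~\ref{lem4.3}-style compactness, and the theorem follows.
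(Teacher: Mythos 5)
Your proposal is correct and lands on essentially the same mechanism as the paper, though it packages the key step differently. The paper first builds an auxiliary norm $\|\cdot\|_{Z}$ on $N^1(X_C)_{\mathbb R}$ via infima over effective decompositions (Lemma~\ref{lem_norm}), checks it is a genuine norm under the hypothesis $(E\cdot Z)>0$ on $\overline\Eff\setminus\{0\}$, and then invokes equivalence of norms on a finite-dimensional space to pass from $\|\cdot\|_{H_C^n}$ to $\|\cdot\|_{\sigma(C)}$. You bypass the norm formalism entirely: you apply the Lemma~\ref{lem4.3} compactness-and-homogeneity argument twice, once in each direction, to get the two-sided comparison $(1/m_0)(E\cdot H_C^n)\leq(E\cdot\sigma_*C)\leq M(E\cdot H_C^n)$ for pseudo-effective $E$, which is exactly what norm equivalence yields after restricting to effective classes. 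Both routes ultimately rest on the same compactness of a slice of $\overline\Eff(X_C)$, so there is no mathematical gain, but your version is a bit more self-contained since it does not need Lemma~\ref{lem_norm} (in particular it avoids having to check the infimum formula defines a norm, which is where the strict positivity hypothesis is really put to work in the paper). On the notational worry you raise at the end: the paper's hypothesis (and the corresponding line in its proof) should be read as $E\in\overline\Eff(X_C)\setminus\{0\}$, not $\overline\Eff(X)$, since the intersections are taken with $1$-cycles $\sigma_*C$ and $H_C^n$ on $X_C$; this appears to be sloppy notation in the statement rather than a genuine compatibility issue, and once read that way your argument closes without the extra base-change bookkeeping you flagged as a potential obstacle.
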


We prepare the following lemma.

\begin{lem}\label{lem_norm}
Let $X$ be a smooth projective variety and $Z \subset X$ a 1-cycle such that 
$(E \cdot Z) >0$ for any $E \in \overline{\Eff}(X) \setminus \{0\}$.
We define a non-negative function $||\cdot ||_Z: N^1(X)_{\mathbb R} \to \mathbb R$
as 
$$||v||_Z = \inf \{ (v_1 \cdot Z)+ (v_2 \cdot Z) | v=v_1 - v_2, \ v_1, v_2\ 
\mathrm{are\ effective\ classes} \}.$$
\begin{itemize}
\item[(i)]
$||v||_Z=(v \cdot Z)$ for any effective class $v \in N^1(X)_\mathbb R$.
\item[(ii)]
$||\cdot ||_Z$ is a norm on $N^1(X)_{\mathbb R}$.
\end{itemize}
\end{lem}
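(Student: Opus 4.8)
The plan is to verify the defining properties directly, the only substantive ingredient being a compactness argument against the hypothesis on $Z$.

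First I would record two preliminary observations. The infimum is over a nonempty set: fixing an ample class $A$, for $N \gg 0$ the class $NA + v$ is ample, hence effective, so $v = (NA+v) - NA$ exhibits $v$ as a difference of effective classes; thus $||v||_Z$ is a finite nonnegative real number for every $v \in N^1(X)_{\mathbb R}$. Also, $(w \cdot Z) \geq 0$ for every effective class $w$, since $w \in \overline{\Eff}(X)$ and the hypothesis gives $(w\cdot Z)>0$ unless $w=0$. For (i), if $v$ is effective then the decomposition $v = v - 0$ gives $||v||_Z \leq (v \cdot Z)$; conversely, for any $v = v_1 - v_2$ with $v_1,v_2$ effective we have $v_1 = v + v_2$, hence $(v_1 \cdot Z) + (v_2 \cdot Z) = (v \cdot Z) + 2(v_2 \cdot Z) \geq (v \cdot Z)$ because $(v_2 \cdot Z)\geq 0$, so $||v||_Z \geq (v\cdot Z)$ and equality holds.

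For (ii), the formal properties come out by manipulating decompositions: positive homogeneity follows from the bijection $(v_1,v_2)\mapsto (cv_1,cv_2)$ for $c>0$, the identity $||-v||_Z = ||v||_Z$ from swapping $v_1$ and $v_2$, and these combine to $||cv||_Z = |c|\,||v||_Z$; the triangle inequality follows by adding decompositions, $v+w = (v_1+w_1) - (v_2+w_2)$ with $v_1+w_1,\,v_2+w_2$ effective, giving $||v+w||_Z \leq (v_1\cdot Z)+(v_2\cdot Z)+(w_1\cdot Z)+(w_2\cdot Z)$ and then taking infima over the two decompositions separately. Nonnegativity is immediate.

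The one point requiring real work is that $||v||_Z = 0$ forces $v=0$. Here I would fix an auxiliary norm $||\cdot||$ on $N^1(X)_{\mathbb R}$ and set $S = \{E \in \overline{\Eff}(X) : ||E||=1\}$, which is compact; the function $E \mapsto (E\cdot Z)$ is continuous and, by hypothesis, strictly positive on $S$, hence bounded below there by some $\varepsilon > 0$, so by homogeneity $(E\cdot Z) \geq \varepsilon\,||E||$ for all $E \in \overline{\Eff}(X)$. If $||v||_Z = 0$, choose effective classes $v_1^{(j)}, v_2^{(j)}$ with $v = v_1^{(j)} - v_2^{(j)}$ and $(v_1^{(j)}\cdot Z) + (v_2^{(j)}\cdot Z) \to 0$; since both summands are nonnegative, each tends to $0$, and the bound gives $||v_i^{(j)}|| \leq \varepsilon^{-1}(v_i^{(j)}\cdot Z) \to 0$, so $v = v_1^{(j)} - v_2^{(j)} \to 0$ and therefore $v = 0$. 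This compactness-plus-limiting step is the main obstacle; everything else is formal.
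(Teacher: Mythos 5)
Your proposal is correct and follows essentially the same route as the paper: part (i) is the same computation, and part (ii) reduces definiteness to the fact that $(E \cdot Z) > 0$ on $\overline{\Eff}(X)\setminus\{0\}$ forces a sequence of effective classes with intersection numbers tending to $0$ to tend to $0$. The only difference is one of detail: where the paper simply asserts that $\lim_n (v_n^{\pm}\cdot Z)=0$ implies $\lim_n v_n^{\pm}=0$, you make this precise by the compactness argument $(E\cdot Z)\ge\varepsilon\|E\|$ on the unit sphere of the pseudo-effective cone (which is exactly the argument the paper already used in Lemma \ref{lem4.3}), so your write-up is if anything slightly more self-contained.
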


\begin{proof}
(i) For effective classes $v_1, v_2$ such that $v=v_1 - v_2$, 
we have $(v \cdot Z)= (v_1 \cdot Z) - (v_2 \cdot Z) \leq (v_1 \cdot Z) + (v_2 \cdot Z)$.
So $||v||_Z=(v \cdot Z)$.

(ii) It is easy to see that 
\begin{itemize}
\item
$||cv||_Z=|c| \cdot ||v||_Z$ for any $c \in \mathbb R$ and $v \in N^1(X)_\mathbb R$ and 
\item
$||v+w||_Z \leq ||v||_Z + ||w||_Z$ for any $v, w \in N^1(X)_\mathbb R$.
\end{itemize}
Take $v \in N^1(X)_\mathbb R$ and assume that $||v||_Z=0$.
Then we have 
$$\{ v_n^+ \}_n, \{ v_n^- \}_n \subset N^1(X)_\mathbb R$$ 
such that 
$v=v_n^+ - v_n^-$ for every $n$ 
and 
$$\lim_{n \to \infty} ((v_n^+ \cdot Z)+(v_n^- \cdot Z))=0.$$
So $\lim_{n \to \infty} (v_n^{\pm} \cdot Z) =0$.
Since $(w \cdot Z)>0$ for any $w \in \overline{\Eff}(X) \setminus \{0\}$,
it follows that $\lim_{n \to \infty} v_n^{\pm}=0$.
Therefore $v=\lim_{n \to \infty}(v_n^+ - v_n^-)=0$.
So $||\cdot||_Z$ satisfies the conditions of norm.
\end{proof}

\begin{proof}[Proof of Theorem \ref{thm_suff}]
Set $n=\dim X$.
We have 
\begin{align*}
\delta_f
&= \delta_{f_C}\ \ \ \mathrm{(by\ Lemma\ \ref{lem4.3.1})} \\
&= \lim_{m \to \infty} ((f_C^m)^*H_C \cdot H_C^n)^{1/m} \\
&= \lim_{m \to \infty} ||(f_C^m)^*H_C||_{H_C^n}^{1/m}.\ \ \ 
\mathrm{(by\ Lemma\ \ref{lem_norm}\ (i))}
\end{align*}
Note that $||\cdot ||_{H_C^n}$ is a norm since $(E \cdot H_C^n) >0$ for every 
$E \in \overline{\Eff}(X) \setminus \{0\}$ (cf.~\cite[Lemma 20]{KaSi2}).
We obtain  
\begin{align*}
\delta_f
&=\lim_{m \to \infty} ||(f_C^m)^*H_C||_{\sigma(C)}^{1/m}\ \ \ 
\mathrm{(since\ }||\cdot ||_{H_C^n}\ \mathrm{is\ equivalent\ to\ }||\cdot ||_{\sigma(C)})\\
&=\lim_{m \to \infty} ((f_C^m)^*H_C \cdot \sigma(C))^{1/m} \ \ \ 
\mathrm{(by\ Lemma\ \ref{lem_norm}\ (i))} \\
&=\lim_{m \to \infty} \deg^+(\sigma^*(f_C^m)^*H_C)^{1/m} \\
&=\lim_{m \to \infty} \deg^+((f_C^m \circ \sigma)^*H_C)^{1/m}\ \ \ 
\mathrm{(by\ Lemma\ \ref{lem4.2}\ (ii))} \\
&= \lim_{m \to \infty} \tilde h_H^+(f^m(P))^{1/m}\ \ \ 
\mathrm{(by\ Proposition\ \ref{prop2.12.3})} \\
&=\alpha_f(P).
\end{align*}
\end{proof}


\section{Arithmetic degrees for projective spaces}\label{sec5}

In this section, we study arithmetic degrees for dynamical systems on 
projective spaces. 

At first, we give some sufficient conditions for a rational point at which 
the arithmetic degree attains the dynamical degree. 
\begin{lem}\label{lem5.0}
Let $C$ be a curve. Set $X= \mathbb P_k^n \times C$. 
Take an pseudo-effective Cartier divisor $E$ on $X$ 
and a general fiber $F$ of $\pi= \pr_C$.  
Then $\mathcal O_X(E) \equiv \mathcal O_X(d) \otimes \mathcal O_X(eF)$ 
for some $d, e \in \mathbb Z_{\geq 0}$. 
\end{lem}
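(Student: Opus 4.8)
The goal is to identify the pseudo-effective cone of $X = \mathbb{P}^n_k \times C$ with the closed cone generated by the classes $H$ (a hyperplane pulled back from $\mathbb{P}^n_k$) and $F$ (a fiber of $\pr_C$). I would first compute the Néron–Severi group: since $C$ is a smooth projective curve and $\mathbb{P}^n_k$ has Picard rank $1$, the projection formula / Künneth-type argument gives $N^1(X)_{\mathbb{R}} = \mathbb{R}H \oplus \mathbb{R}F$. (If one prefers to avoid $\Pic$ of $C$ having positive-dimensional part, note we work with numerical classes, and $\Pic^0(C)$ dies numerically against the curve classes used below.) So any divisor class is $aH + bF$ for real $a,b$, and for a Cartier divisor $E$ the coefficients are integers.

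\textbf{Pinning down the cone.} The plan is to intersect a pseudo-effective class $E \equiv aH + bF$ against two natural curve classes and read off sign constraints. First, take $Z_1 = \ell \times \{pt\}$, a line in a fiber $\mathbb{P}^n_k \times \{pt\}$: then $(H \cdot Z_1) = 1$ and $(F \cdot Z_1) = 0$, so $(E \cdot Z_1) = a \geq 0$, since the intersection of a pseudo-effective divisor with a movable (indeed free) curve is nonnegative. Second, take $Z_2 = \{pt\} \times C$, a horizontal section: then $(H \cdot Z_2) = 0$ and $(F \cdot Z_2) = 1$ (one point of $C$ meets one fiber transversally), giving $(E \cdot Z_2) = b \geq 0$. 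Both $Z_1$ and $Z_2$ move in families covering $X$, so the intersection-nonnegativity for pseudo-effective divisors applies (alternatively: a pseudo-effective class is a limit of effective $\mathbb{Q}$-divisors, and an irreducible effective divisor not containing a given covering curve meets it nonnegatively, while $H$ and $F$ are themselves effective so the only effective divisors in class $aH+bF$ with $a$ or $b$ negative would have to contain every member of the covering family — impossible). Hence $a = d \geq 0$ and $b = e \geq 0$, and $\mathcal{O}_X(E) \equiv \mathcal{O}_X(dH) \otimes \mathcal{O}_X(eF) = \mathcal{O}_X(d) \otimes \mathcal{O}_X(eF)$ in the notation of the statement.

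\textbf{Main obstacle.} The only genuinely delicate point is the nonnegativity $(E \cdot Z_i) \geq 0$ for $E$ merely pseudo-effective rather than effective. I would handle this by the standard fact that a covering family of curves is a nef curve class, or equivalently by writing $E = \lim_j E_j$ with $E_j$ effective $\mathbb{Q}$-divisors and checking that $Z_i$ can be chosen (moving in its family) to meet $\Supp E_j$ properly, so $(E_j \cdot Z_i) \geq 0$, then passing to the limit. The converse inclusion — that $dH + eF$ with $d,e \geq 0$ is actually pseudo-effective — is trivial since $H$ and $F$ are effective, but it is not even needed for the stated direction. One should also note the reduction from "pseudo-effective Cartier divisor $E$" to its numerical class is exactly why the conclusion is stated with $\equiv$ rather than $\sim$, so no further Picard-group bookkeeping (e.g. the $\Pic^0(C)$ ambiguity) is required.
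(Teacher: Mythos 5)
Your proposal is correct, but it takes a genuinely different route from the paper's proof. You work entirely on the level of numerical classes: after identifying $N^1(X)_{\mathbb R} = \mathbb R H \oplus \mathbb R F$, you pair $E$ against two families of movable curves, a line $\ell \times \{\mathrm{pt}\}$ in a fiber and a section $\{\mathrm{pt}\} \times C$, and read off $d, e \geq 0$ from the nonnegativity of intersections of a pseudo-effective class with curve classes that move in families covering $X$. This is clean and handles the pseudo-effective case directly, at the cost of invoking (the easy direction of) the duality between the pseudo-effective cone and movable curves. The paper instead first reduces to $E$ effective by the closedness of the cone defined by $d,e\geq 0$, writes $\mathcal O_X(E) \cong \mathcal O_X(d) \otimes \pi^*\mathcal O_C(D_C)$ from $\Pic(X) \cong \mathbb Z \oplus \pi^*\Pic(C)$, gets $d \geq 0$ by restricting $E$ to a fiber, and gets $e = \deg D_C \geq 0$ sheaf-theoretically: the projection formula gives $\pi_*\mathcal O_X(E) \cong S^d(\mathcal O_C^{\oplus n+1}) \otimes \mathcal O_C(D_C)$, and the existence of a global section of $\mathcal O_X(E)$ forces $\mathcal O_C(D_C)$ to have a section, hence $\deg D_C \geq 0$. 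In substance the two arguments do the same thing — your pairing with $\{\mathrm{pt}\}\times C$ is a numerical shadow of the paper's pushforward computation, and your pairing with a fiber line is exactly the restriction to a fiber — but yours stays in intersection theory throughout while the paper uses the coherent pushforward. Both proofs are short; yours slightly streamlines the reduction step and makes the cone structure of $\overline{\Eff}(X)$ (used again in Theorem \ref{thm5.0.1}~(i)) more transparent, while the paper's is more self-contained, needing only $\Pic(X)$ and the projection formula.
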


\begin{proof}
It is sufficient to prove the claim for effective divisors, so we may assume that 
$E$ is effective.
Since $\Pic(X)$ is generated by $\mathcal O_X(1)$ and $\pi^* \Pic(C)$, 
there are an integer $d$ and a divisor $D_C$ on $C$ such that 
$\mathcal O_X(E) \cong \mathcal O_X(d) \otimes \pi^* \mathcal O_C(D_C)$. 
Set $e=\deg D_C$. 
Then $\mathcal O_X(E) \equiv 
\mathcal O_X(d) \otimes \mathcal O_X(eF)$. 

Since $E|_F$ is effective and
$\mathcal O_F(E|_F) \cong \mathcal O_{\mathbb P_{\mathbb C}^n}(d)$, 
$d \geq 0$. 
By projection formula,  
$$\pi_*(\mathcal O_X(d) \otimes \pi^* \mathcal O_C(D_C)) 
\cong \pi_*(\mathcal O_X(d)) \otimes \mathcal O_C(D_C)
\cong S^d(\mathcal O_C^{\oplus n+1}) \otimes \mathcal O_C(D_C).$$
Since $H^0(C,S^d(\mathcal O_C^{\oplus n+1}) \otimes \mathcal O_C(D_C))= 
 H^0(X, \mathcal O_X(d) \otimes \pi^* \mathcal O_C(D_C)) \neq 0$, 
$D_C$ is effective. 
So $e=\deg D_C \geq 0$. 
\end{proof}

\begin{thm}\label{thm5.0.1}
Let $(X= \mathbb P_{\overline{k(t)}}^n, f)$ be a dynamical system over 
$\overline{k(t)}$ and 
$(X_C= \mathbb P_k^n \times C \overset{\pr_{C}}{\to} C, f_C)$  
a model of $(X, f)$ over a curve $C$. 
Take a morphism $g: C \to \mathbb P^n_k$ corresponding to a rational point 
$P_g \in X_f$ and set 
$\sigma_g= (g, \mathrm{id}_C): C \to X_C$. 
\begin{itemize}
\item[(i)]
Assume that 
$g$ is non-constant and 
$\Img(\sigma_g) \cap I_{f_C^m}=\varnothing$ for every $m \geq 1$. 
Then $\alpha_f(P_g)$ exists and 
$\alpha_f(P_g)= \delta_f$. 
\item[(ii)]
Assume the following conditions.
\begin{itemize}
\item[($*$)] For every $m \geq 0$, 
$\pr_{\mathbb P_k^n} \circ f_C^m \circ \sigma_g$ is non-constant. 
\item[($**$)] There is a sequence of positive integers 
$m_1 < m_2 < \ldots$ such that 
$\Img(f_C^{m_k} \circ \sigma_g) \cap I_{f_C^{m_{k+1}-m_k}}=\varnothing$ 
for every $k \geq 1$ 
and $\lim_{k \to \infty} (m_k/m_{k+1})=0$.  
\item[($***$)] The limit $\alpha_f(P_g)$ exists. 
\end{itemize}
Then the equality $\alpha_f(P_g)=\delta_f$ holds. 
\end{itemize}
\end{thm}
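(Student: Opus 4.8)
For part (i), the idea is to verify the hypotheses of Theorem \ref{thm_suff} for the section $\sigma_g$. The first bullet point of Theorem \ref{thm_suff}, namely $\sigma_g(C) \cap I_{f_C^m} = \varnothing$ for all $m \geq 1$, is exactly one of our assumptions. So the only thing to check is the second bullet point: that $(E \cdot \sigma_g(C)) > 0$ for every $E \in \overline{\Eff}(X_C) \setminus \{0\}$. This is where Lemma \ref{lem5.0} enters: since $X_C = \mathbb P_k^n \times C$, any pseudo-effective class $E$ is numerically equivalent to $\mathcal O_{X_C}(d) \otimes \mathcal O_{X_C}(eF)$ with $d, e \geq 0$ and not both zero. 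Then $(E \cdot \sigma_g(C)) = d \cdot \deg(g^*\mathcal O(1)) + e \cdot (F \cdot \sigma_g(C)) = d \deg(g^*\mathcal O(1)) + e$, using that $\sigma_g(C)$ meets a general fiber $F$ transversally in one point. Since $g$ is non-constant, $\deg(g^*\mathcal O(1)) > 0$, so this quantity is strictly positive whenever $(d,e) \neq (0,0)$. Hence Theorem \ref{thm_suff} applies and gives $\alpha_f(P_g) = \delta_f$.

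For part (ii), since the existence of $\alpha_f(P_g)$ is assumed in $(***)$, and since $\alpha_f(P_g) \leq \delta_f$ always holds by Theorem \ref{thm4.4}, it suffices to prove the reverse inequality $\alpha_f(P_g) \geq \delta_f$. The plan is to exploit the subsequence $m_1 < m_2 < \cdots$ from $(**)$. Along this subsequence, condition $(**)$ lets us apply Lemma \ref{lem4.2}(ii) to the composite $f_C^{m_{k+1}-m_k} \circ (f_C^{m_k} \circ \sigma_g)$, giving an \emph{equality} relating heights at steps $m_k$ and $m_{k+1}$ rather than merely an inequality. Concretely, writing $h_m := \deg^+((f_C^m \circ \sigma_g)^*H_C)$ (which computes $\tilde h_H^+(f^m(P_g))$ up to $O(1)$ by Proposition \ref{prop2.12.3}), condition $(**)$ and Lemma \ref{lem4.2}(ii) should yield $h_{m_{k+1}} = \deg^+\big((f_C^{m_{k+1}-m_k})^*H_C \cdot (f_C^{m_k}\circ\sigma_g)_* C\big)$. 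Then I use condition $(*)$ to guarantee that the image curve $(f_C^{m_k}\circ\sigma_g)_* C$ has positive intersection with every nonzero pseudo-effective class — by the same Lemma \ref{lem5.0} computation as in part (i), since $\pr_{\mathbb P_k^n}\circ f_C^{m_k}\circ\sigma_g$ being non-constant forces the degree-component $d$ to contribute positively. This lets me bound $h_{m_{k+1}}$ below by a constant (independent of $k$) times $((f_C^{m_{k+1}-m_k})^*H_C \cdot H_C^n)$, whose $(m_{k+1}-m_k)$-th root tends to $\delta_{f_C} = \delta_f$.

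Putting this together: from $h_{m_{k+1}} \gtrsim \big((f_C^{m_{k+1}-m_k})^*H_C \cdot H_C^n\big)$ and the fact that this last quantity grows like $\delta_f^{m_{k+1}-m_k}$, I get $h_{m_{k+1}}^{1/m_{k+1}} \gtrsim \delta_f^{(m_{k+1}-m_k)/m_{k+1}} = \delta_f^{1 - m_k/m_{k+1}}$. Taking $k \to \infty$ and using $\lim_{k\to\infty} m_k/m_{k+1} = 0$ from $(**)$, the right-hand side tends to $\delta_f$, so $\limsup_m h_m^{1/m} \geq \delta_f$, hence $\overline\alpha_f(P_g) \geq \delta_f$; combined with $(***)$ this gives $\alpha_f(P_g) \geq \delta_f$, and we are done. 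The main obstacle I anticipate is bookkeeping the constants carefully: the lower bound from the Lemma \ref{lem5.0}-type argument produces a constant $M_k$ a priori depending on the image curve at step $m_k$, and I need to confirm it can be taken uniform in $k$ — this should follow because the relevant intersection number $(F \cdot (f_C^{m_k}\circ\sigma_g)_*C) = 1$ is constant and the degree component is a positive integer, so the comparison with $H_C^n$ can be made with a single constant. A secondary subtlety is handling the $\deg^+$ versus $\deg$ distinction and the $O(1)$ errors from Proposition \ref{prop2.12.3}, but these are harmless after taking $m$-th roots.
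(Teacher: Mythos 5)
Your part (i) is exactly the paper's argument: verify the two hypotheses of Theorem \ref{thm_suff} via Lemma \ref{lem5.0} and the non-constancy of $g$, and conclude. Your part (ii) is also correct and reaches the same conclusion by the same key steps --- pass to the subsequence $m_k$, use $(**)$ with Lemma \ref{lem4.2}(ii) to turn the chain-rule comparison into an equality, use $(*)$ for the needed positivity, and exploit $m_k/m_{k+1}\to 0$. The bookkeeping in (ii) is slightly different from the paper's, and the paper's version is a bit cleaner: instead of an ample $H_C$ and a uniform comparison constant with $(\,\cdot\,\cdot H_C^n)$, the paper works directly with $\mathcal O_{X_C}(1)$, decomposes $(f_C^{l_k})^*\mathcal O_{X_C}(1)\equiv\mathcal O_{X_C}(d_{l_k})\otimes\mathcal O_{X_C}(e_{l_k}F)$ with $d_{l_k},e_{l_k}\ge 0$, and immediately gets $b_{m_{k+1}}=d_{l_k}b_{m_k}+e_{l_k}\ge d_{l_k}$ (with $b_{m_k}\ge 1$ from $(*)$), then uses $\delta_f=\lim_m d_m^{1/m}$; this avoids the norm-equivalence constant entirely. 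Your uniform-constant argument does work --- for $E\equiv\mathcal O_{X_C}(d)\otimes\mathcal O_{X_C}(eF)$ with $d,e\ge 0$ and $Z_k=(f_C^{m_k}\circ\sigma_g)_*C$, one has $(E\cdot Z_k)=db_{m_k}+e\ge d+e\ge\tfrac{1}{n}(E\cdot H_C^n)$ when $H_C\equiv\mathcal O_{X_C}(1)+F$ --- so the concern you flag resolves as you predicted, but it is extra work relative to the paper's more direct inequality.
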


\begin{proof}
(i) Let $F$ be a general fiber of $\pr_C$.
Then 
$$\overline{\Eff}(X_C)
=\mathbb R_{\geq 0} \mathcal O_X(F) + \mathbb R_{\geq 0} \mathcal O_X(1)$$
by Lemma \ref{lem5.0}.
It is obvious that $\sigma_g$ satisfies the assuption of Theorem \ref{thm_suff}.
So (i) follows from Theorem \ref{thm_suff}.

(ii) 
For $m \geq 1$, set $(f_C^m)^*\mathcal O_{X_C}(1) \equiv \mathcal O_{X_C}(d_m) \otimes 
\mathcal O_{X_C}(e_mF)$. 
Then $d_m, e_m \geq 0$ by Lemma \ref{lem5.0}. 
It is clear that $(f^m)^*\mathcal O_X(1) 
\equiv \mathcal O_X(d_m)$, and so 
$$\delta_f= \lim_{m \to \infty} ((f^m)^*\mathcal O_X(1) \cdot 
\mathcal O_X(1)^{n-1})^{1/m}=\lim_{m \to \infty} d_m^{1/m}.$$

Set $b_m= \deg((f_C^m \circ \sigma_g)^*\mathcal O_{X_C}(1))$. 
By ($*$), $b_m \geq 1$ for every $m$. 
So we have 
$$\alpha_f(P_g)=\lim_{m \to \infty} \max \{b_m, 1\}^{1/m} 
= \lim_{m \to \infty} b_m^{1/m}.$$
For $k \geq 1$, set $l_k=m_{k+1}-m_k$. 
We compute
\begin{align*}
b_{m_{k+1}}
&= \deg((f_C^{m_{k+1}} \circ \sigma_g)^*\mathcal O_{X_C}(1)) \\
&= \deg(f_C^{l_k} \circ f_C^{m_k} \circ \sigma_g)^*\mathcal O_{X_C}(1))\\
&= \deg(f_C^{m_k} \circ \sigma_g)^*(f_C^{l_k})^*\mathcal O_{X_C}(1))\ \ \ 
\mathrm{(by\ }(**)\ \mathrm{and\ Lemma\ \ref{lem4.2}\ (ii))}  \\
&= \deg((f_C^{m_k} \circ \sigma_g)^*\mathcal O_{X_C}(d_{l_k}) 
\otimes \mathcal O_{X_C}(e_{l_k}F)) \\
&\geq \deg((f_C^{m_k} \circ \sigma_g)^*\mathcal O_{X_C}(d_{l_k})) \\
&=d_{l_k} b_{m_k}  \\
&\geq d_{l_k}.
\end{align*}
Note that $\lim_{k \to \infty} l_k =\infty$ by the assumption that $\lim_{k \to \infty} (m_k/m_{k+1})=0$. 
Hence 
\begin{align*}
\alpha_f(P_g)
&= \lim_{m \to \infty} (b_m)^\frac{1}{m}\ \ \ 
\mathrm{(by\ Proposition\ \ref{prop2.12.3})} \\
&= \lim_{k \to \infty} (b_{m_{k+1}})^\frac{1}{m_{k+1}} \\
&\geq \lim_{k \to \infty} (d_{l_k})^{\frac{1}{l_k} \cdot (1-\frac{m_k}{m_{k+1}})} \\
&= \delta_f.
\end{align*}
Combining with Theorem \ref{thm4}, it follows that  
$\alpha_f(P_g)=\delta_f$. 
\end{proof}

Next, we show that a sufficiently general morphism $g: C \to \mathbb P_k^n$ 
of a given sufficiently large degree corresponds to a rational point whose arithmetic degree 
attains the dynamical degree.

\begin{defn}\label{def5.1}
Let $C$ be a curve of genus $g(C)$ over $k$ and $d, n$ positive integers. 
$\Mor_d(C, \mathbb P_k^n)$ denotes the set of morphisms $g: C \to \mathbb P_k^n$ 
such that $\deg(g^*\mathcal O(1)) = d$. 
\end{defn}

$\Mor_d(C, \mathbb P_k^n)$ has a structure of $k$-variety with the evaluation 
$e:\Mor_d(C, \mathbb P_k^n) \times C \to \mathbb P_k^n$ 
which maps $(g,p)$ to $g(p)$. 
Moreover, if $\Mor_d(C, \mathbb P_k^n)$ is non-empty, we have 
$$\dim \Mor_d(C, \mathbb P_k^n) \geq (n+1)d+ n(1-g(C))$$ 
(cf.~\cite[1.1]{KoMo}).

\begin{thm}\label{thm5.2}
Let $(X= \mathbb P_{\overline{k(t)}}^n, f)$ be a dynamical system over 
$\overline{k(t)}$ and 
$(X_C= \mathbb P_k \times C \overset{\pr_C}{\to} C, f_C)$  
a model of $(X, f)$ over a curve $C$ of genus $g(C)$. 
Take a positive integer $d$ satisfying 
$d > \frac{n(g(C)-1)}{n+1}.$ 
$P_g \in X(\overline{k(t)})$ denotes the rational point corresponding to 
$g \in \Mor(C, \mathbb P_k^n)$. 
Then $\alpha_f(P_g)$ exists and 
$\alpha_f(P_g) = \delta_f$ 
for a sufficiently general $g \in \Mor_d(C, \mathbb P_k^n)$. 
\end{thm}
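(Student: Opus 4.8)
The plan is to apply Theorem \ref{thm5.0.1} (i) to a sufficiently general $g \in \Mor_d(C, \mathbb P_k^n)$. That theorem requires two things of $\sigma_g = (g, \mathrm{id}_C)$: that $g$ be non-constant, and that $\Img(\sigma_g) \cap I_{f_C^m} = \varnothing$ for every $m \geq 1$. The non-constancy is automatic once $d \geq 1$, since $\deg(g^*\mathcal O(1)) = d > 0$. So the entire content of the proof is a dimension count showing that the ``bad'' locus of $g$'s — those whose graph meets some $I_{f_C^m}$ — is a proper closed (or at least meager) subset of $\Mor_d(C, \mathbb P_k^n)$, which has dimension at least $(n+1)d + n(1-g(C)) > 0$ by the hypothesis on $d$ and the stated lower bound for $\dim \Mor_d$.

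First I would fix $m \geq 1$ and analyze the indeterminacy locus $I_{f_C^m} \subset X_C = \mathbb P_k^n \times C$. Because $f_C$ is a rational map over $C$, so is $f_C^m$, and hence $I_{f_C^m}$ maps into $C$; its fiber over a general point of $C$ is the indeterminacy locus of the corresponding self-map of $\mathbb P^n$, which has codimension $\geq 2$ in $\mathbb P^n$. Thus $\dim I_{f_C^m} \leq (n - 2) + 1 = n - 1$. Now consider the evaluation-type incidence variety $Z_m = \{ (g, p) \in \Mor_d(C, \mathbb P_k^n) \times C : (g(p), p) \in I_{f_C^m} \}$; using the evaluation morphism $e \colon \Mor_d(C, \mathbb P_k^n) \times C \to \mathbb P_k^n$ and the graph map $(g,p) \mapsto (g(p),p) \in \mathbb P^n_k \times C$, $Z_m$ is the preimage of $I_{f_C^m}$ under a morphism that, for fixed $p$, is a submersion onto $\mathbb P^n_k$ in the $g$-direction (one can always deform the value $g(p)$ in any direction while keeping the degree, as $\Mor_d$ is irreducible of the expected dimension for $d$ large — or, more robustly, just argue that the fiber of $e(\cdot, p)$ over any point has dimension $\dim \Mor_d - n$). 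Hence the fiber of $Z_m \to C$ over $p$ has dimension at most $(\dim \Mor_d - n) + (\dim I_{f_C^m} - 1)$ generically, where the ``$-1$'' accounts for the constraint imposed along $C$; carefully, $\dim Z_m \leq \dim \Mor_d - n + (n-1) = \dim \Mor_d - 1$. Therefore the image of $Z_m$ in $\Mor_d(C, \mathbb P_k^n)$ has dimension $< \dim \Mor_d$, i.e. it is a proper closed subset.

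Next, the set of ``bad'' $g$ is $\bigcup_{m \geq 1} \mathrm{pr}_1(Z_m)$, a countable union of proper closed subsets of the irreducible variety $\Mor_d(C, \mathbb P_k^n)$ (here I use that $k$ is algebraically closed of characteristic zero, and I would invoke irreducibility of $\Mor_d$ for $d$ in the stated range — or simply restrict to one irreducible component of the expected dimension). Over an uncountable field this complement is nonempty; since the theorem only claims the conclusion for ``sufficiently general $g$,'' meaning $g$ avoiding a countable union of proper subvarieties, we are done: any such $g$ satisfies the hypotheses of Theorem \ref{thm5.0.1} (i), so $\alpha_f(P_g)$ exists and equals $\delta_f$.

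The main obstacle is the dimension estimate on $Z_m$ — specifically, making rigorous the claim that imposing ``$(g(p),p) \in I_{f_C^m}$'' cuts down $\Mor_d$ by the full codimension of $I_{f_C^m}$ in $\mathbb P^n_k \times C$. One must either cite a strong enough version of the statement that $\Mor_d(C,\mathbb P^n_k)$ ``separates jets'' / has surjective evaluation with equidimensional fibers for $d$ large (which is exactly where the bound $d > n(g(C)-1)/(n+1)$ is used), or give a hands-on argument perturbing a map $g$ at a point $p$. A secondary subtlety is the uniformity in $m$: one needs $\dim I_{f_C^m} \leq n-1$ for all $m$, which holds because each $f_C^m$ is a genuine dominant rational self-map and its indeterminacy locus, being the complement of an open set on which it is a morphism, has codimension $\geq 2$ fiberwise over $C$; this is standard but worth stating explicitly. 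Apart from these points, the argument is a routine Baire-category-over-an-uncountable-field count.
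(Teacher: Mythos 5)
Your proposal is essentially the paper's proof, with one organizational difference (you route through Theorem \ref{thm5.0.1}(i), while the paper redoes the $\underline\alpha_f(P_g) \geq \delta_f$ computation inline — the two are interchangeable). The substance, a dimension count showing that the locus of $g$ whose section meets some $I_{f_C^m}$ is a countable union of proper closed subsets, is the same. A few remarks on the points you flagged as potential obstacles.

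First, your bound $\dim I_{f_C^m} \leq n-1$ is correct but the fiberwise argument you sketch (general fiber of $I_{f_C^m} \to C$ has codimension $\geq 2$ in $\mathbb P^n$, so $\dim I_{f_C^m} \leq (n-2)+1$) is not quite what you want: special fibers of $I_{f_C^m} \to C$ could be larger than the general one. The cleaner observation, which the paper uses, is simply that $X_C$ is smooth (hence normal) and $X_C$ is projective, so the indeterminacy locus of any rational self-map of $X_C$ has codimension $\geq 2$ in $X_C$; since $\dim X_C = n+1$, this gives $\dim I_{f_C^m} \leq n-1$ directly, with no fibration needed.

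Second, the equidimensionality of the evaluation map — the part you correctly identify as ``the main obstacle'' and leave somewhat informal with the word ``submersion'' — is exactly where the paper supplies the missing idea, and it is cleaner than perturbing $g$ at a point or invoking jet separation. One fixes an irreducible component $M \subset \Mor_d(C,\mathbb P^n_k)$ of maximal dimension and observes that $\Aut(\mathbb P^n_k)$ acts on $M$ by post-composition, compatibly with the natural action on $\mathbb P^n_k \times C$ through $\Phi = (e,\mathrm{id}_C)$. Since $\Aut(\mathbb P^n_k)$ is transitive on $\mathbb P^n_k$, each evaluation $e_p \colon M \to \mathbb P^n_k$ is surjective with all fibers isomorphic, hence of constant dimension $\dim M - n$; this holds for every $p \in C$, so $\Phi$ is surjective with equidimensional fibers and $\dim \Phi^{-1}(I_{f_C^m}) \leq (\dim M - n) + \dim I_{f_C^m} \leq \dim M - 1$. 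That is the rigorous form of your ``deform the value $g(p)$ in any direction'' heuristic, and it avoids any appeal to irreducibility of the full $\Mor_d$, to jet separation, or to the precise structure of $\Mor_d$ beyond the dimension lower bound. The hypothesis $d > n(g(C)-1)/(n+1)$ is used only to guarantee $\dim M \geq (n+1)d + n(1-g(C)) > 0$, i.e.\ that there is room to move $g$ at all.

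With these two points tightened, your argument matches the paper's.
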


\begin{proof}
Let $M \subset \Mor_d(C, \mathbb P_k^n)$ be an irreducible component 
of maximal dimension. 
Then $\dim M>0$ by assumption. 
Set $\Phi =(e, \mathrm{id}_C): M \times C \to X_C$, 
where $e$ is the evaluation. 
For any $g \in M$ and $\rho \in \Aut(\mathbb P_k^n)$,
we have $\deg(g^* \rho^* \mathcal O_{\mathbb P^n}(1)) 
= \deg(g^* \mathcal O_{\mathbb P^n}(1))= d$, so 
$\Aut(\mathbb P_k^n)$ acts on $M$.
Fix $g_0 \in M$.
For any $(x, p) \in X_C$, 
we can take $\rho \in \Aut(\mathbb P_k^n)$ such that $\rho(g_0(p))= x$. 
Then $\Phi(\rho \circ g_0, p)= ((\rho \circ g_0)(p), p)= (x, p)$.  
So it follows that $\Phi$ is surjective. 

For every $m \geq 1$, we compute 
\begin{align*}
\dim \Phi^{-1}(I_{f_C^m})
&\leq (\dim(M \times C) - \dim X_C) + \dim I_{f_C^m} \\
&\leq (\dim M + 1 - \dim X_C) + \dim X_C -2 \\
&= \dim M -1.
\end{align*}
Hence $\pr_M(\Phi^{-1}(I_{f_C^m})) \subset M$ is a proper subset of $M$ 
for every $m \geq 1$.

For $g \in M$, $\sigma_g=(g, \mathrm{id}_C): C \to X_C$ 
denotes the corresponding section of $\pr_C$.
For $g \in M$, we have 
\begin{align*}
\sigma_g(C) \cap I_{f_C^m} = \varnothing 
&\iff \Phi(\{g\} \times C) \cap I_{f_C^m} = \varnothing \\
&\iff  \{g\} \times C \cap \Phi^{-1}(I_{f_C^m}) = \varnothing \\
&\iff g \not\in \pr_M(\Phi^{-1}(I_{f_C^m})). 
\end{align*}
Set $(f_C^m)^*\mathcal O_{X_C}(1) \equiv \mathcal O_{X_C}(d_m) \otimes 
\mathcal O_{X_C}(e_mF)$, where $F$ is a general fiber of $\pr_C$. 
Then $d_m, e_m \geq 0$ by Lemma \ref{lem5.0}. 
Take $g \in M \setminus \bigcup_{m \geq 1} \pr_M(\Phi^{-1}(I_{f_C^m}))$.
We compute
\begin{align*}
\underline \alpha_f(P_g)
&= \liminf_{m \to \infty} \deg ((f_C^m \circ \sigma_g)^*\mathcal O_{X_C}(1))_+^{1/m} \ \ \ 
\mathrm{(by\ Proposition\ \ref{prop2.12.3})}\\ 
&= \liminf_{m \to \infty} \deg (\sigma_g^*(f_C^m)^*\mathcal O_{X_C}(1))_+^{1/m}\ \ \ 
\mathrm{(by\ Lemma\ \ref{lem4.2}\ (ii))} \\
&= \liminf_{m \to \infty} (\mathcal O_{X_C}(d_m) \otimes 
\mathcal O_{X_C}(e_mF) \cdot \sigma_{g*}C)_+^{1/m} \\
&\geq \liminf_{m \to \infty} (\mathcal O_{X_C}(d_m) \cdot \sigma_{g*} C)_+^{1/m} \\
&= \liminf_{m \to \infty} (d_m (\mathcal O_{X_C}(1) \cdot \sigma_{g*}C))_+^{1/m} \\
&= \liminf_{m \to \infty} d_m^{1/m} \\
&= \delta_f.
\end{align*}
Note that $(\mathcal O_{X_C}(1) \cdot \sigma_{g*}C)
= (\mathcal O_{\mathbb P_k^n}(1) \cdot g_*C) >0$ since $d= \deg(g) >0$. 
Combining with Theorem \ref{thm4.4}, it follows that $\alpha_f(P_g)$ exists and 
$\alpha_f(P_g)=\delta_f$. 
\end{proof}


\section{Construction of orbits}\label{sec_orbit}

In this section, we consider a problem  
on the existence of the rational points at which the arithmetic degree attains 
the dynamical degree.

Over $\overline{\mathbb Q}$, the following problem is studied in some papers 
(cf. \cite[Theorem 3]{KaSi1} and \cite[Theorem 1.7]{MSS}).

\begin{prob}\label{prob5.3}
Let $(X, f)$ be a dynamical system over $\overline{\mathbb Q}$. 
Is there a subset 
$S \subset X_f$ such that 
\begin{itemize}
\item 
$\alpha_f(P)$ exists and $\alpha_f(P)=\delta_f$ for every $P \in S$, 
\item
$O_f(P) \cap O_f(Q) = \varnothing$ 
if $P, Q \in S$ and $P \neq Q$, and 
\item 
$S$ is a Zariski dense subset of $X$.
\end{itemize}  
or not? 
\end{prob}

We give an affirmative answer for any dynamical system 
over $\overline{k(t)}$,
where $k$ is an uncountable algebraically closed field of characteristic 0.

\begin{thm}\label{thm_orbit}
Assume that $k$ is an uncountable algebraically closed field of characteristic 0.
Let $(X, f)$ be a dynamical system over 
$\overline{k(t)}$. 
Then there exists 
a subset $S \subset X_f$ such that 
\begin{itemize}
\item 
$\alpha_f(P)$ exists and $\alpha_f(P)=\delta_f$ for every $P \in S$, 
\item
$O_f(P) \cap O_f(Q) = \varnothing$ 
if $P, Q \in S$ and $P \neq Q$, and 
\item 
$S$ is a Zariski dense subset of $X$.
\end{itemize}  
\end{thm}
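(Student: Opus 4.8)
The strategy is to fix a model $(X_C \xrightarrow{\pi} C, f_C)$ of $(X,f)$ over a curve $C$ (Lemma \ref{lem4.1}) and to produce sections $\sigma:C\to X_C$ to which Theorem \ref{thm_suff} applies; each such section gives a point $P\in X_f$ with $\alpha_f(P)=\delta_f$. To invoke Theorem \ref{thm_suff} I need a section $\sigma$ with $\sigma(C)\cap I_{f_C^m}=\varnothing$ for all $m\ge 1$ and with $(E\cdot\sigma(C))>0$ for every nonzero pseudo-effective class $E$. The second condition holds automatically whenever $\sigma(C)$ moves enough to be a complete intersection-type curve, or more simply when the section has ``enough degree''; in fact if we take $\sigma(C)$ to dominate $X_C$ in a suitable family and use Lemma \ref{lem_norm}, positivity on $\overline{\Eff}(X)\setminus\{0\}$ can be arranged. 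So the heart of the matter is the avoidance of $\bigcup_{m\ge1}I_{f_C^m}$ together with density and pairwise-disjointness of the orbits.

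The key input for avoiding the indeterminacy loci for \emph{all} $m$ simultaneously is that $k$ is uncountable. First I would build, just as in the proof of Theorem \ref{thm5.2}, a family $M$ of sections (for instance using a large linear system on $X_C$, or an auxiliary embedding into a projective bundle over $C$, so that $M$ carries a surjective evaluation $\Phi:M\times C\to X_C$). For each fixed $m\ge 1$, the subset $\pr_M(\Phi^{-1}(I_{f_C^m}))\subsetneq M$ is a proper closed subset by the dimension count in Theorem \ref{thm5.2}. Since $M$ is an irreducible variety over an uncountable field $k$, the complement of the countable union $\bigcup_{m\ge 1}\pr_M(\Phi^{-1}(I_{f_C^m}))$ is still dense (an irreducible variety over an uncountable field is not a countable union of proper closed subsets). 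Hence there are densely many $g\in M$ whose section $\sigma_g$ avoids every $I_{f_C^m}$; for these, Theorem \ref{thm_suff} gives $\alpha_f(P_g)=\delta_f$ once the positivity-of-intersection hypothesis is also secured, which I would likewise get on a dense open subset of $M$ (or by taking $\deg$ large and applying a Bertini-type argument so that $\sigma_g(C)$ is numerically positive against $\overline{\Eff}$).

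To get Zariski density of $S$ in $X$ and pairwise-disjointness of orbits, I would argue iteratively. The evaluation map $\Phi$ is surjective onto $X_C$, so the sections in the good locus of $M$ sweep out a Zariski-dense set of points of $X_C$, hence of $X$; choosing one good $g$ through a prescribed general point and varying the point gives density of $S$. For disjointness of orbits: given finitely many already-chosen good points $P_1,\dots,P_r$ with $\alpha=\delta_f$, their full orbits $\bigcup_i O_f(P_i)$ form a countable set of sections, so the corresponding countably many $\{g\}\in M$ and the countably many $g$ whose section meets one of these orbit-sections form a countable union of proper closed subsets of $M$; again by uncountability of $k$ we may pick a new good $g_{r+1}$ outside all of them and outside $\bigcup_m \pr_M(\Phi^{-1}(I_{f_C^m}))$, with $O_f(P_{r+1})$ disjoint from the earlier orbits. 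Running this over a countable dense set of target points of $X$ produces the desired $S$.

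\textbf{Main obstacle.} The delicate point is packaging these ``countably many proper closed conditions'' cleanly: one must check that the locus in $M$ where $\sigma_g(C)$ meets a given orbit-section, or where $\sigma_g$ fails the positivity hypothesis $(E\cdot\sigma_g(C))>0$ for all $E\in\overline{\Eff}(X)\setminus\{0\}$, is indeed contained in a proper closed subset — the latter because $\overline{\Eff}(X)$ is a closed cone with compact base, so the positivity condition is open and one only needs it to be nonempty for some $g$, which follows from surjectivity of $\Phi$ and Lemma \ref{lem_norm}. Once all the relevant loci are seen to be proper closed (or countable unions thereof), the uncountability of $k$ does the rest, exactly mirroring the mechanism of Theorem \ref{thm5.2} but iterated to control orbits rather than a single point.
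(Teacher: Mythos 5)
Your plan is modeled on the proof of Theorem~\ref{thm5.2}, but that argument is special to $X=\mathbb P^n$: there $X_C=\mathbb P^n_k\times C$ and sections of $\pr_C$ are the same as morphisms $C\to\mathbb P^n_k$, so one really does have a positive-dimensional family $\Mor_d(C,\mathbb P^n_k)$ with a surjective evaluation. For an arbitrary smooth projective $X$ and a fixed model $\pi\colon X_C\to C$, the set of sections of $\pi$ is exactly $X_\eta(K(C))$ (Proposition~\ref{prop2.11}), and this can be finite, non-dense, or empty. Neither ``a large linear system on $X_C$'' nor ``an auxiliary embedding into a projective bundle'' produces sections of $\pi$: a member of a linear system is a divisor, not a section, and a section of an ambient projective bundle need not land inside $X_C$. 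So there is no family $M$ with surjective evaluation $\Phi\colon M\times C\to X_C$ to run the countability argument on, and the uncountability of $k$ cannot be brought to bear the way you propose. The positivity hypothesis of Theorem~\ref{thm_suff}, $(E\cdot\sigma(C))>0$ for every nonzero $E\in\overline\Eff(X_C)$, is likewise not something one can enforce by ``taking $\deg$ large'': a section always has degree~$1$ over $C$, so there is no degree parameter to crank up.

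The paper circumvents this by \emph{changing the base curve at every step}. Given $P_1,\dots,P_k$ defined by sections over a curve $C_k$, one passes to a resolution $X_k\to (X_{C_k})_{\mathrm{red}}$, takes a very ample $H$ and general members $H_1,\dots,H_n\in|H|$, and sets $C_{k+1}:=H_1\cap\cdots\cap H_n$. This $C_{k+1}$ is a \emph{multisection}, not a section, but after base change along the finite map $C_{k+1}\to C_k$ it becomes a genuine section $\sigma_{k+1}$ of $X_{C_{k+1}}\to C_{k+1}$, i.e.\ a point of $X(\overline{k(t)})$. The complete-intersection shape of $C_{k+1}$ does two jobs at once: Bertini-type generality lets one impose avoidance of $\bigcup_m I_{f^m}$, disjointness from the (countably many) orbit curves of $P_1,\dots,P_k$, and passage through a prescribed point $a_{k+1}$ (which is where the uncountability of $k$ is used, via Lemma~\ref{lem_points}); and because $C_{k+1}=H^{n}$ in $N_1(X_k)$, the height computation gives directly
\[
\underline\alpha_f(P_{k+1})\ \ge\ \liminf_{m\to\infty}\bigl((f_k^m)^*H\cdot H^{n-1}\bigr)^{1/m}\ =\ \delta_f,
\]
so Theorem~\ref{thm_suff} is not needed at all. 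Your proposal keeps a fixed base curve and a fixed section space, which is precisely what fails in general; the tower $C_0\leftarrow C_1\leftarrow C_2\leftarrow\cdots$ of finite covers, obtained by iteratively cutting complete-intersection curves out of resolutions of the total spaces, is the idea your sketch is missing.
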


\begin{lem}\label{lem_points}
Let $k$ be an uncountable algebraically closed field and $X$ an algebraic scheme 
of positive dimension over $k$.
Let $Z_1, Z_2, \ldots \subset X$ be proper closed subsets of $X$.
Then $\bigcup_i Z_i \neq X$ and 
there exists a countable set $M$ 
of $k$-valued points of $X \setminus \bigcup_i Z_i$ such that 
$M$ is Zariski dense in $X$.
\end{lem}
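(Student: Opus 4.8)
The plan is to reduce the statement to the case $X=\mathbb{A}^d_k$ and then prove that case by induction on $d$. Observe first that the inequality $\bigcup_i Z_i\neq X$ is a formal consequence of the second assertion: the countable dense set $M$ lies in $X\setminus\bigcup_i Z_i$, which is therefore nonempty. We may assume $X$ is reduced, and, treating each irreducible component separately — which is harmless since in the applications $X$ is a variety — that $X$ is integral. Replacing $X$ by a dense affine open $\Spec A$ and each $Z_i$ by $Z_i\cap\Spec A$ (still proper and closed, since $\Spec A\not\subset Z_i$; and density in a dense open implies density in $X$), we may assume $X=\Spec A$ with $A$ a finitely generated integral $k$-algebra of dimension $d=\dim X\ge 1$.

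By Noether normalization there is a finite surjective morphism $\psi\colon\Spec A\to\mathbb{A}^d_k$. Since $\psi$ is finite it is closed, and $\dim\psi(Z_i)\le\dim Z_i\le d-1$, so $W_i:=\psi(Z_i)$ is a proper closed (possibly empty) subset of $\mathbb{A}^d_k$. For any $q\in\mathbb{A}^d(k)$ the fibre $\psi^{-1}(q)$ is a nonempty finite set of closed points whose residue fields are finite over $k$, hence equal to $k$; and if $q\notin\bigcup_i W_i$, every point of $\psi^{-1}(q)$ lies in $X\setminus\bigcup_i Z_i$. Hence it suffices to produce a countable Zariski-dense $N\subseteq\mathbb{A}^d(k)\setminus\bigcup_i W_i$: picking one point of $\Spec A$ above each $q\in N$ gives a countable $M\subseteq X(k)\setminus\bigcup_i Z_i$, and $M$ is dense because $\psi(\overline{M})$ is closed and contains $N$, hence equals $\mathbb{A}^d_k$, which forces $\overline{M}=\Spec A$ as $\psi$ is finite and $\Spec A$ is integral of dimension $d$.

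It remains to prove: for $k$ uncountable algebraically closed and proper closed subsets $W_1,W_2,\dots\subseteq\mathbb{A}^d_k$, there is a countable Zariski-dense $N\subseteq\mathbb{A}^d(k)\setminus\bigcup_i W_i$. Replacing each $W_i$ by a hypersurface containing it, assume $W_i=V(g_i)$ with $0\neq g_i\in k[x_1,\dots,x_d]$; I argue by induction on $d$. For $d=1$ each $V(g_i)$ is finite, so $\bigcup_i V(g_i)$ is countable while $k$ is uncountable, and any countably infinite $N\subseteq k\setminus\bigcup_i V(g_i)$ is dense in $\mathbb{A}^1_k$. For $d\ge 2$, write $\mathbb{A}^d_k=\mathbb{A}^{d-1}_k\times\mathbb{A}^1_k$ with $x_d$ on the last factor and expand $g_i=\sum_j h_{ij}(x_1,\dots,x_{d-1})x_d^{\,j}$; since $g_i\neq 0$, the set $T_i:=\bigcap_j V(h_{ij})$ is a proper closed subset of $\mathbb{A}^{d-1}_k$, and for $a\notin T_i$ the polynomial $g_i(a,x_d)$ is nonzero. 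By induction choose a countable dense $N'=\{a^{(1)},a^{(2)},\dots\}\subseteq\mathbb{A}^{d-1}(k)\setminus\bigcup_i T_i$. For each $m$ the set $R_m:=\{c\in k\mid g_i(a^{(m)},c)=0\text{ for some }i\}$ is a countable union of finite sets, hence countable, so pick a countably infinite $C_m\subseteq k\setminus R_m$, and set $N:=\{(a^{(m)},c)\mid m\ge 1,\ c\in C_m\}$. Then $N$ is countable and avoids every $V(g_i)$. It is dense: for each $m$ the infinitely many points $\{a^{(m)}\}\times C_m$ give $\overline{N}\supseteq\{a^{(m)}\}\times\mathbb{A}^1_k$, whence $\overline{N}\supseteq\overline{N'\times\mathbb{A}^1_k}=\overline{N'}\times\mathbb{A}^1_k=\mathbb{A}^d_k$, using that the projection $\mathbb{A}^d_k\to\mathbb{A}^{d-1}_k$ is open to get $\overline{S\times\mathbb{A}^1_k}=\overline{S}\times\mathbb{A}^1_k$. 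This completes the induction and the proof.

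The genuinely delicate point is ensuring that $M$, equivalently $N$, is not merely nonempty but Zariski dense; this is arranged by always retaining infinitely many points in each fibre of the successive projections, so that their closures propagate up to the whole affine space. Everything else — the reduction to $\mathbb{A}^d_k$ via Noether normalization and the cardinality bookkeeping — is formal, once one notes that over an uncountable field a countable union of proper closed subsets of $\mathbb{A}^1$ omits uncountably many points.
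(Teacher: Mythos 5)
Your proof is correct and takes essentially the same route as the paper: reduce to $\mathbb{A}^d_k$ via Noether normalization, then induct on dimension by projecting off the last coordinate and choosing, over a dense countable set in $\mathbb{A}^{d-1}_k$, infinitely many admissible values in each $\mathbb{A}^1_k$-fibre. Your write-up is marginally cleaner in two spots --- passing to hypersurfaces $V(g_i)$ and expanding in $x_d$ lets you define the bad locus $T_i$ directly, where the paper instead splits the $Z_i$ according to whether they dominate $\mathbb{A}^{n-1}_k$ and introduces a fibre-dimension stratum $W_k$; and you make explicit the lift of a dense set from $\mathbb{A}^d_k$ back to $X$ through the finite cover, which the paper leaves implicit when it ``replaces $X$ by $\mathbb{A}^n_k$.''
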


\begin{proof}
Replacing $X$ by an affine open subset, we may assume that $X$ is affine.
By Noether's normalization lemma, there is a finite cover $\phi: X \to \mathbb A_k^n$.
Replacing $X$ and $Z_1, Z_2, \ldots$ by $\mathbb A_k^n$ and $\phi(Z_1), \phi(Z_2), \ldots$,
we may assume that $X=\mathbb A_k^n$.

We prove the claim by induction on $n$.
Assume that $n=1$.
Then $Z_i(k)$ is a finite set for every $i$ and 
$\mathbb A_k^1(k)=k$ is uncountable,
so $\bigcup_i Z_i \neq \mathbb A_k^1$.
We take an infinite subset $M$ of $\mathbb A_k^1(k) \setminus \bigcup_i Z_i(k)$.
Then $M$ is a Zariski dense subset of $\mathbb A_k^1$.

Assume that the claim holds for $\mathbb A_k^1, \mathbb A_k^2,\ldots, \mathbb A_k^{n-1}$.
Define $p: \mathbb A_k^n \to \mathbb A_k^{n-1}$ 
and $q: \mathbb A_k^n \to \mathbb A_k^1$ as 
$p(x_1, \ldots ,x_n)=(x_1,\ldots ,x_{n-1})$ and $q(x_1, \ldots ,x_n)=x_n$.
Let $\{ Z_j'\}_j$ (resp.~$\{ Z_k''\}_k$) be the members of $\{ Z_i \}_i$ such that 
$p(Z_j') \neq \mathbb A_k^{n-1}$ (resp.~$p(Z_k'') = \mathbb A_k^{n-1}$).
Let $W_k \subset \mathbb A_k^{n-1}$ be the set of points $w \in \mathbb A_k^{n-1}$ 
such that the fiber $(p|_{Z_k''})^{-1}(w)= p^{-1}(w) \cap Z_k''$ of $p|_{Z_k''}$ over $w$ 
has positive dimension.
Then $W_k$ is a proper closed subset of $\mathbb A_k^{n-1}$.
By induction hypothesis, $\bigcup_j \phi(Z_j') \cup \bigcup_k W_k \neq \mathbb A_k^{n-1}$ 
and we can take a countable subset 
$M' \subset \mathbb A_k^{n-1}(k) \setminus (\bigcup_j \phi(Z_j')(k) \cup \bigcup_k W_k(k))$
such that $M'=\{ a_m\}_{m=1}^\infty$ is Zariski dense in $\mathbb A_k^{n-1}$.
For every $m$ and $k$, $p^{-1}(a_m) \cap Z_k'' \neq \mathbb A_k^1$ 
since $a_m \not\in W_k$.
So $\bigcup_{m,k} (p^{-1}(a_m) \cap Z_k'') \not\subset \mathbb A_k^1$ 
and we can take a countable subset 
$M'' \subset \mathbb A_k^1(k) \setminus \bigcup_{m,k} (p^{-1}(a_m)(k) \cap Z_k''(k))$ 
such that $M''$ is Zariski dense in $\mathbb A_k^1$, by induction hypothesis.
Set $M=M' \times M'' \subset \mathbb A_k^{n-1} \times \mathbb A_k^1$.
Then it is clear that $M$ satisfies the claim.
\end{proof}

\begin{proof}[Proof of Theorem \ref{thm_orbit}]
Take a model $(X_{C_0} \overset{\pi_{C_0}}{\to} C_0, f_{C_0})$  
of $(X, f)$ over a curve $C_0$. 
For any curve $C$ with a finite morphism $C \to C_0$,
$(X_C \overset{\pi_C}{\to} C, f_C)$ denotes the pull-back of 
$(X_{C_0} \overset{\pi_{C_0}}{\to} C_0, f_{C_0})$ by $C \to C_0$ 
and $\psi_C: X_C \to X_{C_0}$ denote the projection.
For a section $\sigma:C \to X$ and a finite morphism $C' \to C$ of curves,
$(\sigma)_{C'}: C' \to X \times_C C'$ denotes the pull-back of $\sigma$ by $C' \to C$.

By Lemma \ref{lem_points}, we can take a countable subset 
$M=\{a_i\}_{i=1}^\infty \subset X_{C_0}$ 
such that
\begin{itemize}
\item 
$M$ is Zariski dense in $X_{C_0}$ and 
\item 
$a_i \not\in I_{f_{C_0}^m}$ for every $m \geq 1$ and 
$i \geq 1$.  
\end{itemize}

We will construct rational points $P_1, P_2, \ldots \in X$ inductively.
Let $C_k \to C_{k-1} \to \cdots \to C_1 \to C_0$ be a sequence of finite morphisms of 
curves and 
$P_i \in X$ a rational point corresponding to a section 
$\sigma_i: C_i \to X_{C_i}$ of $\pi_{C_i}$ 
for each $1 \leq i \leq k$.
Assume that $P_1, \ldots, P_k \in X$ satisfy the following condition $(*)_k$: 
\begin{itemize}
\item
$P_i \in X_f$ for $1 \leq i \leq k$,
\item
$\alpha_f(P_i)=\delta_f$ for $1 \leq i \leq k$, 
\item
$O_f(P_i) \cap O_f(P_j) = \varnothing$ if $1 \leq i, j \leq k$ and $i \neq j$, and
\item
$a_i \in \Img(\psi_{C_i} \circ \sigma_i)$ for $1 \leq i \leq k$.
\end{itemize}
Set $n=\dim X$.
Note that $X_{C_k}$ is smooth outside a finite union of fibers of $\pi_{C_k}$.

Let $p_k: X_k \to X_{C_k}$ be a resolution of $(X_{C_k})_{\mathrm{red}}$ 
whose exceptional locus is contained in a finite union of fibers of $\pi_{C_k}$.
By blowing up a point in $(p_k \circ \psi_{C_k})^{-1}(a_{k+1})$,
we may assume that $(p_k \circ \psi_{C_k})^{-1}(a_{k+1})$ has codimension 1.
We take a very ample divisor $H$ on $X_k$ and 
suitable members $H_1, \ldots, H_n \in |H|$, 
and set $C_{k+1}=H_1 \cap \cdots  \cap H_n$.
Let $\iota: C_{k+1} \to X_k$ denote the inclusion.
We can choose $H_1, \ldots, H_n$ as satisfying 
\begin{itemize}
\item[(I)]
$C_{k+1}$ is a smooth and irreducible curve satisfying 
$\Img(\pi_{C_k} \circ p_k \circ \iota)=C_k$, 
\item[(II)]
$C_{k+1} \not\subset p_k^{-1} (f_{C_k}^m)^{-1}(I_{f_{C_k}})$ for every $m \geq 0$,
\item[(III)] 
$C_{k+1} \cap I_{f_k^m} = \varnothing$ for every $m \geq 1$, 
\item[(IV)] 
$C_{k+1} \not\subset (f_k^{m'})^{-1}(\Img(f_k^m \circ p_k^{-1} \circ (\sigma_i)_{C_k}))$ 
for every $m, m'$ and $1 \leq i \leq k$, and 
\item[(V)]
$a_{k+1} \in \Img(\psi_{C_k} \circ p_k \circ \iota)$.
\end{itemize}
Set $\phi=\pi_{C_k} \circ p_k \circ \iota: C_{k+1} \to C_k$.
Then we obtain the following diagram: 
$$
\xymatrix{
C_{k+1} \ar@(d, ul)[ddr]_{\mathrm{id}} \ar@(r, ul)[drr]^{p_k \circ \iota} \ar[dr]^{\sigma_{k+1}} 
& & \\
 & X_{C_{k+1}} \ar[r]^\psi \ar[d]^{\pi_{C_{k+1}}} & X_{C_k} \ar[d]^{\pi_{C_k}}\\
 & C_{k+1} \ar[r]^{\phi}  & C_k 
}
$$
Here $X_{C_{k+1}}=X_{C_k} \times_{C_k} C_{k+1}$ and 
$\sigma_{k+1}$ is the unique morphism which makes the above diagram commutative.
Let $P_{k+1} \in X$ be the rational point of $X$ corresponding to $\sigma_{k+1}$.
By (II), $\Img(\sigma_{k+1}) \not\subset (f_{C_{k+1}}^m)^{-1}(I_{f_{C_{k+1}}})$ 
for every $m \geq 0$.
Hence $\Img(f_{C_{k+1}}^m \circ \sigma_{k+1}) \not\subset I_{f_{C_{k+1}}}$ 
and so $f^m(P_{k+1}) \not\in I_f$ for every $m \geq 0$.
Therefore $P_{k+1} \in X_f$.

Let $p_{k+1}: X_{k+1} \to X_{C_{k+1}}$ be a resolution of $(X_{C_{k+1}})_{\mathrm{red}}$ 
whose exceptional locus is over a finite union of fibers of $\pi_{C_{k+1}}$ and 
$\theta=p_k^{-1} \circ \psi \circ p_{k+1}$ becomes a morphism.
Then we obtain the following diagram:
$$
\xymatrix{
X_{k+1} \ar[r]^\theta \ar[d]^{p_{k+1}} & X_k \ar[d]^{p_k} &  \\
X_{C_{k+1}} \ar[r]^\psi \ar[d]^{\pi_{C_{k+1}}} & X_{C_k} \ar[r]^{\psi_k} \ar[d]^{\pi_{C_k}} 
& X_{C_0} \ar[d]^{\pi_{C_0}} \\
C_{k+1} \ar[r]^{\phi} \ar@(ul, dl)[u]^{\sigma_{k+1}} \ar[ur]_{p_k \circ \iota} & C_k \ar[r] & C_0
}
$$
Set $f_k=p_k^{-1} \circ f_{C_k} \circ p_k$, 
$f_{k+1}=p_{k+1}^{-1} \circ f_{C_{k+1}} \circ p_{k+1}$ and 
$\sigma'_{k+1}= p_{k+1}^{-1} \circ \sigma_{k+1}$.
Fix a positive integer $m$.
Then it follows that $p_k \circ \theta \circ f_{k+1}^m \circ \sigma_{k+1}' 
= p_k \circ f_k^m \circ \iota$.
Since $p_k$ is birational, we have 
$\theta \circ f_{k+1}^m \circ \sigma_{k+1}' =f_k^m \circ \iota$.
Take an ample divisor $A$ on $X_{k+1}$ such that $A-\theta^*H$ is ample.
We compute
\begin{align*}
\deg (f_{k+1}^m \circ \sigma_{k+1}')^*A 
&\geq \deg (f_{k+1}^m \circ \sigma_{k+1}')^* \theta^*H \\
&= \deg (\theta \circ f_{k+1}^m \circ \sigma_{k+1}')^*H \\
&= \deg (f_k^m \circ \iota)^*H.
\end{align*}
By (III) and Lemma \ref{lem4.2} (ii), we have 
$$\deg (f_k^m \circ \iota)^*H=\deg \iota^*(f_k^m)^*H= ((f_k^m)^*H \cdot H^{n-1}).$$
Now $(X_{k+1} \xrightarrow{\pi_{C_{k+1}} \circ p_{k+1}} C_{k+1}, f_{k+1})$ is a 
model of $(X, f)$ and $\sigma_{k+1}'$ is a section of $\pi_{C_{k+1}} \circ p_{k+1}$ 
corresponding to $P_{k+1}$.
Therefore 
\begin{align*}
\underline \alpha_f(P_{k+1}) 
&= \liminf_{m \to \infty} \deg((f_{k+1}^m \circ \sigma_{k+1}')^*A)^{1/m}\ \ \ 
\mathrm{(by\ Proposition\ \ref{prop2.12.3})} \\
&\geq \liminf_{m \to \infty} ((f_k^m)^*H \cdot H^{n-1})^{1/m} \\
&=\delta_{f_k} = \delta_f.
\end{align*}
So $\alpha_f(P_{k+1})$ exists and $\alpha_f(P_{k+1})=\delta_f$.

Fix $i \in \{0, \ldots, k \}$  and $m, m' \geq 0$.
By (IV), $\Img(f_k^{m'} \circ \iota) \neq \Img(f_k^m \circ p_k^{-1} \circ (\sigma_i)_{C_k})
=\Img(f_k^m \circ p_k^{-1} \circ (\sigma_i)_{C_k} \circ \phi)$.
Since $p_k$ is birational and the images of both $f_k^{m'} \circ \iota$ and 
$f_k^m \circ p_k^{-1} \circ (\sigma_i)_{C_k} \circ \phi$ intersects 
the isomorphic locus of $p_k$, 
we have 
$$\Img(p_k \circ f_k^{m'} \circ \iota) 
\neq \Img(p_k \circ f_k^m \circ p_k^{-1} \circ (\sigma_i)_{C_k} \circ \phi).$$
On the other hand, 
$$p_k \circ f_k^{m'} \circ \iota = \psi \circ f_{C_{k+1}}^{m'} \circ \sigma_{k+1}$$ 
and 
$$p_k \circ f_k^m \circ p_k^{-1} \circ (\sigma_i)_{C_k} \circ \phi
= \psi \circ f_{C_{k+1}}^{m} \circ (\sigma_i)_{C_{k+1}}.$$
So $\Img(\psi \circ f_{C_{k+1}}^{m'} \circ \sigma_{k+1}) \neq 
\Img(\psi \circ f_{C_{k+1}}^{m} \circ (\sigma_i)_{C_{k+1}})$, and so 
$f_{C_{k+1}}^{m'} \circ \sigma_{k+1} \neq f_{C_{k+1}}^{m} \circ (\sigma_i)_{C_{k+1}}$.
This means that $f^{m'}(P_{k+1}) \neq f^m(P_i)$.
Hence $O_f(P_{k+1}) \cap O_f(P_i) = \varnothing$.

Set $\psi_{k+1}=\psi_k \circ \psi$.
Then $\psi_{k+1} \circ \sigma_{k+1}= \psi_k \circ p_k \circ \iota$.
By (V), $a_i \in \Img(\psi_{k+1} \circ \sigma_{k+1})$.
As a consequence, $P_1, \ldots, P_{k+1}$ satisfies $(*)_{k+1}$.

Continuing this process, we obtain a subset 
$S= \{P_1, P_2, \ldots \} \subset X_f$, 
a sequence $\cdots \to C_1 \to C_0$ of finite morphisms of curves, 
and sections $\sigma_i: C_i \to X_{C_i}$ corresponding to $P_i$ for each $i \geq 1$ 
such that 
\begin{itemize}
\item
$\alpha_f(P_i)=\delta_f$ for every $i$,
\item
$O_f(P_i) \cap O_f(P_j) = \varnothing$ if $i \neq j$, 
and 
\item
$a_i \in \Img(\psi_{C_i} \circ \sigma_i)$ for every $i$.
\end{itemize}
So it is enough to show that $S$ is a Zariski dense subset of $X$.
Let $Z \subset X$ be a proper closed subset of $X$.
We take a finite cover $C \to C_0$ such that $Z$ lifts to a proper closed subset 
$Z_C \subset X_C= X_{C_0} \times_{C_0} C$.
Since $\psi_C(Z_C)$ is a proper closed subset of $X_{C_0}$, $a_i \not\in \psi_C(Z_C)$ 
for some $i$.
Take a curve $C'$ with finite morphisms $C' \to C$ and $C' \to C_i$ which makes the 
following diagram commutative: 
\[
\xymatrix{
&C \ar[rd]&\\
C' \ar[ru] \ar[rd]&&C_{0}\\
&C_i \ar[ru]&
}
\]
Set $Z_{C'} = Z_C \times_C C' \subset X_{C'}$.
Since $a_i \in \Img(\psi_{C'} \circ (\sigma_i)_{C'})$ and 
$a_i \not\in \psi_C(Z_C)=\psi_{C'}(Z_{C'})$, 
$\Img((\sigma_i)_{C'}) \not\subset Z_{C'}$.
So $P_i \not\in Z$.
Therefore $S$ is a Zariski dense subset of $X$.
\end{proof}

Theorem \ref{thm_orbit} includes the following result.

\begin{cor}\label{cor_existence}
Assume that $k$ is an uncountable algebraically closed field of characteristic 0.
Let $(X, f)$ be a dynamical system over $\overline{k(t)}$.
Then there exists a rational point $P \in X_f$ such that 
$\alpha_f(P)=\delta_f$.
\end{cor}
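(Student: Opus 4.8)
The statement to prove is Corollary \ref{cor_existence}: over $\overline{k(t)}$ with $k$ uncountable algebraically closed of characteristic $0$, any dynamical system $(X,f)$ has a rational point $P \in X_f$ with $\alpha_f(P) = \delta_f$. Since this is explicitly flagged as a consequence of Theorem \ref{thm_orbit}, the plan is essentially trivial: it follows immediately.

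\begin{proof}
By Theorem \ref{thm_orbit}, there exists a subset $S \subset X_f$ such that $\alpha_f(P)$ exists and $\alpha_f(P) = \delta_f$ for every $P \in S$, the forward orbits of distinct points of $S$ are disjoint, and $S$ is Zariski dense in $X$. In particular $S$ is non-empty (a Zariski dense subset of a positive-dimensional variety, or even of a point, cannot be empty), so we may choose any $P \in S$. This point lies in $X_f$ and satisfies $\alpha_f(P) = \delta_f$, which is exactly the assertion.
\end{proof}

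Since the real content is already packaged in Theorem \ref{thm_orbit}, there is no obstacle here; the only thing worth remarking is why one bothers to state the corollary separately. The point is that Theorem \ref{thm_orbit} produces \emph{densely many} such points with disjoint orbits, whereas the corollary isolates the bare existence statement, which is the natural function-field analogue of the known number-field results on existence of points whose arithmetic degree attains the dynamical degree; stating it separately makes the comparison with the literature transparent and gives a clean reference point for later use. If one wanted a self-contained argument avoiding the full inductive construction in the proof of Theorem \ref{thm_orbit}, one could instead invoke Theorem \ref{thm5.2} after first passing to a model $(X_C \to C, f_C)$: but that only works directly when $X = \mathbb{P}^n$, so in general one really does want either Theorem \ref{thm_suff} (applied to a suitably chosen section whose class is interior to the dual of the pseudo-effective cone and which avoids all indeterminacy loci) or Theorem \ref{thm_orbit}. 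The cleanest route, and the one I would take, is simply to quote Theorem \ref{thm_orbit} as above.
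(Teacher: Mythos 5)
Your proof is correct and matches the paper's intent exactly: the paper states the corollary as an immediate consequence of Theorem~\ref{thm_orbit} (``Theorem \ref{thm_orbit} includes the following result'') with no separate argument, and your observation that the Zariski dense set $S$ is non-empty, so any $P \in S$ works, is precisely the one-line deduction being invoked.
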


For a dynamical system on a rational variety, 
we can take a subset $S$ as in the statement of Theorem \ref{thm_orbit} 
over a fixed function field.

\begin{thm}\label{thm5.4}
Assume that $k$ is an uncountable algebraically closed field of characteristic 0.
Let $(X, f)$ be a dynamical system over 
$\overline{k(t)}$ such that $X$ is rational.
Then there exists a subset $S \subset X_f$ such that 
\begin{itemize}
\item
There exists a function field $K$ of a curve over $k$ and a model 
$X_K$ of $X$ over $K$ such that all points in $S$ are defined over $K$,
\item 
$\alpha_f(P)=\delta_f$ for every $P \in S$, 
\item 
$S$ is a Zariski dense subset of $X$, and 
\item
$O_f(P) \cap O_f(Q) = \varnothing$ 
if $P, Q \in S$ and $P \neq Q$.
\end{itemize} 
\end{thm}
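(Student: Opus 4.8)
The plan is to reduce to the case where $X = \mathbb{P}^n_{\overline{k(t)}}$ with the \emph{constant} model $\mathbb{P}^n_k \times_k C$ mapping to $C$ by projection, and then to carry out the inductive construction from the proof of Theorem \ref{thm_orbit}, but over one \emph{fixed} curve $C$. The point is that for a product model the sections of $\mathbb{P}^n_k \times C \to C$ are literally morphisms $C \to \mathbb{P}^n_k$, so they can be produced through prescribed points from the families $\Mor_d(C, \mathbb{P}^n_k)$ as in Theorems \ref{thm5.2} and \ref{thm5.0.1} without ever enlarging $C$; this is what keeps all of $S$ over a single function field $K = k(C)$, whereas the slicing used in Theorem \ref{thm_orbit} forces $C$ to grow.

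For the reduction, put $n = \dim X$ and choose a curve $C$ over $k$ large enough that $(X,f)$ has a smooth projective model $(\mathcal{X} \to C, f_{\mathcal{X}})$ and that, using that $X$ is rational over $\overline{k(t)}$ hence over $K := k(C)$, there is a birational map $\nu : \mathbb{P}^n_k \times C \dashrightarrow \mathcal{X}$ over $C$. Set $h := \nu^{-1} \circ f_{\mathcal{X}} \circ \nu : \mathbb{P}^n_k \times C \dashrightarrow \mathbb{P}^n_k \times C$ and let $g$ be its base change to $\overline{k(t)}$; then $(\mathbb{P}^n_k \times C, h)$ is a constant model of $(\mathbb{P}^n_{\overline{k(t)}}, g)$, and $g$ is birationally conjugate to $f$ over $\overline{k(t)}$, so $\delta_g = \delta_f$ (birational invariance of the first dynamical degree). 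Let $B \subsetneq \mathbb{P}^n_k \times C$ be the closed subset off which $\nu$ is an isomorphism onto its image, and write $\nu^* H_{\mathcal{X}} = A + E$ with $A$ ample and $E$ effective (possible since $\nu^* H_{\mathcal{X}}$ is big).

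Now I build $Q_1, Q_2, \dots \in \mathbb{P}^n_{\overline{k(t)}}$, with $Q_i$ corresponding to a section $\sigma_i = (g_i, \mathrm{id}) : C \to \mathbb{P}^n_k \times C$, $g_i \in \Mor_{d_i}(C, \mathbb{P}^n_k)$ of large degree $d_i$, following the proof of Theorem \ref{thm_orbit} with $C$ fixed. Fix (by Lemma \ref{lem_points}, using $k$ uncountable) a countable Zariski-dense set $\{a_i\}_{i\ge1} \subset \mathbb{P}^n_k \times C$ with $a_i \notin I_{h^m}$ for all $m \ge 1$. Given $Q_1, \dots, Q_k$, pick $d \gg 0$, let $M \subseteq \Mor_d(C, \mathbb{P}^n_k)$ be a maximal-dimensional component, write $a_{k+1} = (x_{k+1}, c_{k+1})$, and let $M_{k+1}$ be a maximal-dimensional component of $\{ g \in M \mid g(c_{k+1}) = x_{k+1}\}$; using the $\Aut(\mathbb{P}^n_k)$-action as in Theorem \ref{thm5.2} and transitivity of the stabilizer of $x_{k+1}$ on $\mathbb{P}^n_k \setminus \{x_{k+1}\}$, $M_{k+1}$ is non-empty, of positive dimension for $d \gg 0$, and the evaluation $\Phi_{k+1} : M_{k+1} \times C \to \mathbb{P}^n_k \times C$ is dominant. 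The dimension counts in the proof of Theorem \ref{thm5.2}, applied to $\Phi_{k+1}$, then show that for $d \gg 0$ each of the following is contained in a proper closed subset of $M_{k+1}$, and there are only countably many: (a)~$\{g \mid \sigma_g(C) \cap I_{h^m} \ne \varnothing\}$, $m \ge 1$; (b)~$\{g \mid \Img(h^{m'}\circ\sigma_g) \subseteq \Img(h^m\circ\sigma_i)\}$, $1 \le i \le k$, $m, m' \ge 0$; (c)~$\{g \mid \Img(h^m\circ\sigma_g) \subseteq (h^m)^{-1}(B \cup \Supp E)\}$, $m \ge 0$. Using uncountability of $k$, choose $g_{k+1} \in M_{k+1}$ outside all of them and set $Q_{k+1} = P_{g_{k+1}}$: by (a) and Theorem \ref{thm5.0.1}(i), $Q_{k+1} \in (\mathbb{P}^n_{\overline{k(t)}})_g$ and $\alpha_g(Q_{k+1}) = \delta_g$; by (b), $O_g(Q_{k+1}) \cap O_g(Q_i) = \varnothing$ for $i \le k$; (c) and $a_{k+1} \in \sigma_{k+1}(C)$ are used below.

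Finally set $P_i := \nu_{\overline{k(t)}}(Q_i)$ and $S := \{P_i\}$. By (c) the whole $g$-orbit of each $Q_i$ avoids $B$, so $\nu_{\overline{k(t)}}$ is a local isomorphism along it; hence $f^m(P_i) = \nu_{\overline{k(t)}}(g^m(Q_i))$ for all $m$, so $P_i \in X_f$, the orbits $O_f(P_i)$ remain pairwise disjoint, and (using $a_i \in \sigma_i(C)$ and density of $\{a_i\}$ as at the end of the proof of Theorem \ref{thm_orbit}, plus that $\nu_{\overline{k(t)}}$ is a dominant morphism off $B$) $S$ is Zariski dense in $X$. For the arithmetic degree, $f_{\mathcal{X}}^m \circ \nu \circ \sigma_i = \nu \circ h^m \circ \sigma_i$ as morphisms $C \to \mathcal{X}$, so by Lemma \ref{lem4.2}(ii), Lemma \ref{lem5.0}, and conditions (a) and (c), one gets $\deg\big((f_{\mathcal{X}}^m \circ \nu \circ \sigma_i)^* H_{\mathcal{X}}\big) \ge c\, d_m$ for a fixed $c > 0$, where $(h^m)^*\mathcal{O}(1) \equiv \mathcal{O}(d_m) \otimes \mathcal{O}(e_m F)$; since $\lim_m d_m^{1/m} = \delta_g = \delta_f$, Proposition \ref{prop2.12.3} gives $\underline{\alpha}_f(P_i) \ge \delta_f$, and with Theorem \ref{thm4.4} we conclude $\alpha_f(P_i) = \delta_f$. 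The main obstacle is exactly this transport: a general high-degree curve in $\mathbb{P}^n_k \times C$ meets a fixed divisor, so one cannot require the $g$-orbits to be disjoint from the possibly divisorial loci $B$, $\Supp E$ and the sets $(h^m)^{-1}$ of the earlier orbit closures, only that they be \emph{not contained} in them; one must then verify that each such condition, intersected with the lower-dimensional family $M_{k+1}$, still cuts out a proper closed subset, which is where $d \gg 0$ and the dominance of $\Phi_{k+1}$ do the work.
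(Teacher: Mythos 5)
Your high-level idea --- realize $S$ as sections $\sigma_g = (g,\id_C)$ of the \emph{constant} product model $\mathbb P^n_k\times C\to C$, so that no base change of $C$ is ever needed --- is exactly the paper's key observation. However, the two executions diverge: the paper slices $\mathbb P^n_k$ by a general line $L$ through $b_{k+1}$ and then builds $\sigma_{k+1}$ as the graph of a carefully chosen finite cover $C\to L$; you instead work with the moduli $\Mor_d(C,\mathbb P^n_k)$ and a point-constrained subfamily $M_{k+1}$. Two steps in your argument are genuinely incomplete.

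First, the dimension count you cite from Theorem~\ref{thm5.2} does not transfer to $M_{k+1}$. That count uses that $\Phi:M\times C\to\mathbb P^n_k\times C$ is equidimensional, which is forced by the full $\Aut(\mathbb P^n_k)$-action. Once you impose $g(c_{k+1})=x_{k+1}$ you only retain the $\mathrm{Stab}(x_{k+1})$-action; the fiber of $\Phi_{k+1}$ over $(x_{k+1},c_{k+1})$ is all of $M_{k+1}$, so $\Phi_{k+1}$ is not equidimensional and $\dim\Phi_{k+1}^{-1}(Z)\le\dim(M_{k+1}\times C)-\codim Z$ is not automatic. Moreover $\dim M_{k+1}\approx\dim M-n$ while $\pr_M(\Phi^{-1}(I_{h^m}))$ has dimension up to $\dim M-1$; for $n\ge2$ this is larger than $\dim M_{k+1}$, so one cannot deduce from the ambient count alone that $M_{k+1}\cap\pr_M(\Phi^{-1}(I_{h^m}))$ is a proper subset of $M_{k+1}$. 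You would need a separate argument (using that $a_{k+1}\notin I_{h^m}$ and analyzing the special fibers of $\Phi_{k+1}$). The paper's line-slicing avoids this entirely: condition~(I) makes $I_{g_C^m}\cap(L\times C)$ finite, and then one merely needs the cover $C\to L$ to miss finitely many points.

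Second, the final arithmetic-degree estimate is wrong in direction. You cite Lemma~\ref{lem4.2}(ii) and the decomposition $\nu^*H_{\mathcal X}=A+E$ to conclude $\deg\big((\nu\circ h^m\circ\sigma_i)^*H_{\mathcal X}\big)\ge c\,d_m$, but Lemma~\ref{lem4.2}(ii) requires the orbit curve to be \emph{disjoint} from $I_\nu$, which your condition (c) (``not contained in'') does not give, and Lemma~\ref{lem4.2}(i) applied to $\nu$ gives the \emph{upper} bound $\deg\big((\nu\circ h^m\circ\sigma_i)^*H_{\mathcal X}\big)\le\deg\big((h^m\circ\sigma_i)^*\nu^*H_{\mathcal X}\big)$, which is the wrong inequality. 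The correct fix is to apply Lemma~\ref{lem4.2}(i) to $\nu^{-1}$ (getting $\deg\big((h^m\circ\sigma_i)^*H_Y\big)\le\deg\big((\nu\circ h^m\circ\sigma_i)^*(\nu^{-1})^*H_Y\big)$ and then bounding $(\nu^{-1})^*H_Y$ by a multiple of $H_{\mathcal X}$), or simply to invoke the birational invariance result Theorem~\ref{Theorem:BirationalInvariance}, which is exactly what the paper does; you use neither. Relatedly, ``the whole $g$-orbit avoids $B$'' is too strong --- condition (c) only gives ``not contained in $B$'', which is enough for the generic points $g^m(Q_i)\in V$ but not for disjointness of the orbit curves from $B$.
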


We need a result in \cite{MSS}. 

\begin{thm}[{\cite[Theorem 3.4 (i)]{MSS}}]\label{Theorem:BirationalInvariance}
Let $f \colon X \dashrightarrow X$ and 
$g \colon Y \dashrightarrow Y$ be dominant rational self-maps 
on smooth projective varieties and $\phi \colon Y \dashrightarrow X$ a birational map 
such that $\phi \circ g= f \circ \phi$. 
Let $V \subset Y$ be  an open subset such that 
$\phi|_V : V \to \phi(V)$ is an isomorphism. 
Then $\overline \alpha_g(Q)=\overline \alpha_f(\phi(Q))$ and 
$\underline \alpha_g(Q)=\underline \alpha_f(\phi(Q))$ 
for any $Q \in Y_g \cap \phi^{-1}(X_f)$ satisfying $O_g(Q) \subset V$. 
\end{thm}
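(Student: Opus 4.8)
The plan rests on two facts: that $\phi$ matches the two orbits point by point, and a uniform one‑sided comparison of heights across $\phi$ obtained on a model of its graph.

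First I would prove the pointwise identity $f^m(\phi(Q))=\phi(g^m(Q))$ for all $m\ge 0$. The identity $f\circ\phi=\phi\circ g$ of rational maps $Y\dashrightarrow X$ at once gives $f^m\circ\phi=\phi\circ g^m$ for every $m$, and the pointwise statement follows by induction on $m$: assuming $f^m(\phi(Q))=\phi(g^m(Q))$, one checks that both rational maps $f\circ\phi$ and $\phi\circ g$ are regular at the point $g^m(Q)$ --- the former because $g^m(Q)\in V$ lies off $I_\phi$ and $\phi(g^m(Q))=f^m(\phi(Q))$ lies off $I_f$ (as $\phi(Q)\in X_f$), the latter because $g^m(Q)$ lies off $I_g$ (as $Q\in Y_g$) and $g^{m+1}(Q)\in V$ lies off $I_\phi$ (as $O_g(Q)\subset V$) --- so, two rational maps agreeing on a dense open taking the same value wherever both are regular, we get $f^{m+1}(\phi(Q))=\phi(g^{m+1}(Q))$. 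In particular $O_f(\phi(Q))=\phi(O_g(Q))\subset\phi(V)$. Replacing $(Y,g,\phi,V,Q)$ by $(X,f,\phi^{-1},\phi(V),\phi(Q))$ preserves all hypotheses (using the inclusion just obtained and $\phi^{-1}\circ f=g\circ\phi^{-1}$), so the situation is symmetric; it therefore suffices to prove the two inequalities $\overline\alpha_f(\phi(Q))\le\overline\alpha_g(Q)$ and $\underline\alpha_f(\phi(Q))\le\underline\alpha_g(Q)$, the reverse ones following by this symmetry.

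For these I would pass to a smooth projective variety $\Gamma$ resolving the graph of $\phi$, with birational morphisms $p\colon\Gamma\to Y$ and $q\colon\Gamma\to X$ such that $q=\phi\circ p$. Since $\phi$ is a morphism on $V\subset Y\setminus I_\phi$, the map $p$ is an isomorphism over $V$; write $U=p^{-1}(V)$, so $p|_U\colon U\xrightarrow{\sim}V$ and $\Exc(p)\cap U=\varnothing$, and let $R_m\in U$ be the point with $p(R_m)=g^m(Q)$, so $q(R_m)=\phi(g^m(Q))=f^m(\phi(Q))$. Fix ample Cartier divisors $A_Y$ on $Y$ and $A_X$ on $X$, the latter very ample with a general member $A_X'\in|A_X|$ chosen so that $\Supp(q^*A_X')$ contains no $p$-exceptional prime divisor (possible, since each such prime maps under $q$ into a proper closed subset of $X$). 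Setting $B=p_*(q^*A_X')$, a prime-by-prime comparison of pull-back and push-forward shows $\mathcal E:=p^*B-q^*A_X'$ is an effective $p$-exceptional divisor, and $NA_Y-B$ is ample for $N\gg 0$; hence $q^*A_X'=N\,p^*A_Y-p^*(NA_Y-B)-\mathcal E$. Evaluating at $R_m$ and using functoriality of heights (Proposition \ref{prop2.9.1}), $h_{A_X}(f^m(\phi(Q)))=h_{q^*A_X}(R_m)+O(1)=N\,h_{A_Y}(g^m(Q))-h_{NA_Y-B}(g^m(Q))-h_{\mathcal E}(R_m)+O(1)$. Here $h_{NA_Y-B}(g^m(Q))\ge-O(1)$ because $NA_Y-B$ is ample and heights of ample divisors are bounded below (Proposition \ref{prop2.12.1}, resp.\ Northcott's theorem over number fields), and $h_{\mathcal E}(R_m)\ge-O(1)$ because $\mathcal E$ is effective and $R_m\in U$ lies off $\Supp\mathcal E$, the implied constants being independent of $m$. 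Thus $h_{A_X}(f^m(\phi(Q)))\le N\,h_{A_Y}(g^m(Q))+O(1)$, whence $h_{A_X}^+(f^m(\phi(Q)))\le N'\,h_{A_Y}^+(g^m(Q))$ for a constant $N'$; taking $m$-th roots and passing to $\limsup$ and $\liminf$ gives $\overline\alpha_f(\phi(Q))\le\overline\alpha_g(Q)$ and $\underline\alpha_f(\phi(Q))\le\underline\alpha_g(Q)$, and the symmetry above upgrades these to equalities.

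The main obstacle is the uniformity of the constant $N$ in the estimate $h_{A_X}(\phi(P))\le N\,h_{A_Y}(P)+O(1)$ for $P\in V$: heights are genuinely not invariant under birational maps, so one cannot argue directly on the open set $V$. Routing through the graph converts the discrepancy into the effective exceptional divisor $\mathcal E$, whose support is disjoint from the locus $U$ where all the orbit points $R_m$ sit, plus a correction coming from an ample divisor on $Y$ whose height is bounded below at the (otherwise arbitrary) points $g^m(Q)$ --- and this is precisely where the hypothesis $O_g(Q)\subset V$ enters. The bookkeeping with indeterminacy loci in the first step also requires care but is routine.
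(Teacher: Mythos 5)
The paper does not actually prove this statement; it is imported verbatim from \cite[Theorem 3.4 (i)]{MSS}, so there is no internal proof to compare against. Your argument is correct and is essentially the standard proof of that result: first the orbit identity $f^m(\phi(Q))=\phi(g^m(Q))$ (your induction, checking regularity of both representatives of $f\circ\phi=\phi\circ g$ at $g^m(Q)$, is sound), which also makes the situation symmetric under $\phi\mapsto\phi^{-1}$; then the one-sided uniform estimate $h_{A_X}(\phi(P))\le N\,h_{A_Y}(P)+O(1)$ for $P\in V$, obtained by resolving the graph of $\phi$ and writing $q^*A_X'=p^*B-\mathcal E$ with $\mathcal E$ effective and $p$-exceptional, hence harmless at the points $R_m\in U$ lying over the orbit. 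The only ingredients you use that are not explicitly recorded in this paper's height toolkit are the lower bound $h_D\ge -O(1)$ away from $\Supp D$ for $D$ effective and the boundedness below of ample heights (both are \cite[Theorem B.3.2]{HiSi} and valid over function fields), so the proof is complete as written.
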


\begin{proof}[Proof of Theorem \ref{thm5.4}]
Let $\phi: Y= \mathbb P^n_{\overline{k(t)}} \dashrightarrow X$ be a birational map.
Set $g= \phi^{-1} \circ f \circ \phi$.
Take open subsets $U \subset X$ and $V \subset Y$ such that 
$\phi|_V: V \to U$ is isomorphic.
We can take a curve $C$, a model 
$(X_C \overset{\pr_C}{\to} C, f_C)$ 
(resp.~$(Y_C=\mathbb P_{k}^n \times C  \overset{\pr_C}{\to} C, g_C)$) 
of $(X, f)$ (resp.~$(Y, g)$), 
a lift $U_C$ (resp.~$V_C$) of $U$ (resp.~$V$),
and a birational map $\phi_C: Y_C \dashrightarrow X_C$ 
such that $\phi_C|_{V_C}: V_C \to U_C$ is isomorphic.

Set $Z_C=Y_C \setminus V_C$.
By Lemma \ref{lem_points}, we can take a countable subset 
$M=\{a_i=(b_i, c_i)\}_{i=1}^\infty \subset V_C$ 
such that
\begin{itemize}
\item 
$M$ is Zariski dense in $Y_C$,
\item
$a_i \not\in (g_C^m)^{-1}(I_{g_C} \cup Z_C) \cup \phi_C^{-1} (f_C^m)^{-1}(I_{f_C})$  
for every $m \geq 0$, and
\item 
$b_i \not\in \pr_{\mathbb P_{k}^n}(I_{g_C^m})$ for every $m \geq 1$ and 
$i \geq 1$.  
\end{itemize}
Note that $\pr_{\mathbb P_{k}^n}(I_{g_C^m})$  
is a proper closed subset of $\mathbb P_{k}^n$ for every $m$ 
because $\dim I_{g_C^m} \leq n-1$.   
For $m \geq 1$, let $J_m \subset \pr_{\mathbb P_{k}^n}(I_{g^m})$ be 
the closed subset over which the fibers of 
$\pr_{\mathbb P_{k}^n}|_{I_{g^m}}$ have positive dimensions. 
Then $\codim_{\mathbb P_{k}^n} J_m \geq 2$. 
We  take a point $q \in C$ such that $q \neq c_i$ for all $i$ and 
$f_C|_{F_q}: F_q \dashrightarrow F_q$ is well-defined and dominant, where 
$F_q$ denotes the fiber of $\pr_C: Y_C \to C$ over $q$.

Assume that we have sections $\tau_1,  \ldots, \tau_k$ of $\pr_C$  
corresponding to 
$Q_1, \ldots, Q_k \in Y$ 
which satisfy the condition $(*)_k$: 
\begin{itemize}
\item
$Q_i \in Y_g \cap \phi^{-1}(X_f)$ such that $O_g(Q_i) \subset V$ for $1 \leq i \leq k$,
\item
$\alpha_g(Q_i)=\delta_g$ for $1 \leq i \leq k$,
\item
$O_g(Q_i) \cap O_g(Q_j) = \varnothing$ 
if $1 \leq i, j \leq k$ and $i \neq j$, and 
\item
$a_i \in \Img(\tau_i)$ for $1 \leq i \leq k$.
\end{itemize}
We take general hyperplanes $H_1, \ldots, H_{n-1}$ of $\mathbb P_{k}^n$.  
Then we have a line  
$L=H_1 \cap \cdots \cap H_{n-1} \subset \mathbb P_{k}^n$. 
We can choose $H_1, \ldots, H_{n-1}$ as satisfying 
\begin{itemize}
\item[(I)] 
$L \not\subset \pr_{\mathbb P_{k}^n}(I_{g_C^m})$ and 
$L \cap J_m= \varnothing$ for every $m \geq 1$,   
\item[(II)]
$L \times \{q\} \not\subset 
(g_C^{m'}|_{F_q})^{-1}(\Img(g_C^m \circ \tau_i)\cap F_q)$ 
for every $m, m' \geq 0$ and $1 \leq i \leq k$, and 
\item[(III)]
$b_{k+1} \in L$.
\end{itemize}
Note that $\Img(g_C^m \circ \tau_i)\cap F_q$ is a point since $g_C^m \circ \tau_i$ is 
a section of $\pr_C$. 
By (I), $I_{g_C^m} \cap (L \times C) 
\subset Y$ is a finite set. 
Set $\bigcup_{m \geq 1} (I_{g_C^m} \cap (L \times C))
=\{ (x_j, y_j) \}_j$. 
We can construct a finite cover $\phi: C \to L$ satisfying 
\begin{itemize}
\item[(1)]
$\phi(c_{k+1})=b_{k+1}$, 
\item[(2)]
$\phi(y_j) \neq x_j$ for every $j$, and  
\item[(3)]
$(\phi(q), q) \not\in (g^{m'}|_{F_q})^{-1}(\Img(g^m \circ \tau_i)\cap F_q)$ 
for every $m, m'$ and $1 \leq i \leq k$,
\end{itemize}
by composing a fixed finite morphism $C \to L$ 
with a suitable automorphism on $L$.

Set $\tau_{k+1}: C \overset{(\phi, \mathrm{id}_C)}{\to} L \times C 
\hookrightarrow X_C$ and let
$Q_{k+1} \in \mathbb P^n(K)$ be the corresponding rational point. 
Then $a_{k+1} \in \Img(\tau_{k+1})$ by (1).
Since $a_{k+1} \not\in (g_C^m)^{-1}(I_{g_C} \cup Z_C) \cup \phi_C^{-1} (f_C^m)^{-1}(I_{f_C})$, 
$\Img(g_C^m \circ \tau_{k+1}) \not\subset I_{g_C} \cup Z_C$ 
and $\Img(f_C^m \circ \phi_C \circ \tau_{k+1}) 
 \not\in I_{f_C}$  for every $m \geq 0$.
Hence $Q_{k+1} \in Y_g \cap \phi^{-1}(X_f)$ such that $O_g(Q_{k+1}) \subset V$.
Further, $\Img(\tau_{k+1}) \cap I_{f^m}=\varnothing$ for every $m$ by (2). 
Therefore  
$\alpha_g(Q_{k+1})$ exists and 
$\alpha_g(Q_{k+1})=\delta_g$ by Theorem \ref{thm5.0.1} (i).  
Moreover, $(g_C^{m'} \circ \tau_{k+1})(q) \neq (g^m \circ \tau_i)(q)$ 
for every $m, m'$ and $1 \leq i \leq k$ by (3).  
In particular, it follows that $g_C^{m'} \circ \tau_{k+1} \neq g_C^m \circ \tau_i$ 
for every $m, m'$ and $1 \leq i \leq k$. 
This means that 
$O_g(Q_{k+1}) \cap O_g(Q_i) = \varnothing$ for $1 \leq i \leq k$. 
As a consequence, $\{ \tau_1, \ldots, \tau_k, \tau_{k+1} \}$ satisfies $(*)_{k+1}$. 

Continuing this process, we obtain morphisms $\tau_1, \tau_2, \ldots $ and a subset 
$T= \{Q_1, Q_2, \ldots \} \subset V \cap Y_g$.
Set $P_i = \phi(Q_i)$ and $S=\{ P_1, P_2, \ldots \}$.
Then $P_i$ corresponds to a section $\sigma_i=\phi_{C} \circ \tau_i$.
Since $Q_i \in Y_g \cap \phi^{-1}(X_f)$ and $O_g(Q_i) \subset V$,
$\alpha_g(Q_i)=\alpha_f(P_i)$ by Theorem \ref{Theorem:BirationalInvariance}.
So we have $\alpha_f(P_i)=\delta_f$.
For $i, j$ with $i \neq j$,
$O_g(Q_i) \cap O_g(Q_j) = \varnothing$ implies that 
$O_f(P_i) \cap O_f(P_j) = \varnothing$.
Since $p_i \in \Img(\tau_i)$ for every $i$, $\bigcup_i \Img(\tau_i)$ is Zariski dense 
in $Y_C$ and so $\bigcup_i \Img(\sigma_i)$ is Zariski dense in $X_C$. 
So $S$ is Zariski dense in $X$. 
Therefore $S$ satisfies the claim. 
\end{proof}


\end{document}